\newtheorem{Lemma}{Lemma}[section]
\newtheorem{Theorem}[Lemma]{Theorem}
\newtheorem{Proposition}[Lemma]{Proposition}
\newcommand{\Span}{\operatorname{span}}
\newcommand{\vol}{\operatorname{Vol}}
\newcommand{\diam}{\operatorname{diam}}
\newcommand{\de}[2]{\frac{\partial #1}{\partial #2}}
\newcommand{\p}{\partial}
\newcommand{\id}{\operatorname{Id}}
\newcommand{\supp}{\operatorname{supp}}
\newcommand{\sgn}{\operatorname{sgn}}
\begin{document}
\title[Inverse radiative transfer in the presence of a magnetic field]{An inverse radiative transfer in refractive media equipped with a magnetic field}
\author[Yernat M. Assylbekov]{Yernat M. Assylbekov}
\address{Department of Mathematics, University of Washington, Seattle, WA 98195-4350, USA}
\email{y\_assylbekov@yahoo.com}

\author[Yang Yang]{Yang Yang}
\address{Department of Mathematics, University of Washington, Seattle, WA 98195-4350, USA}
\email{yangy5@uw.edu}

\begin{abstract}
We study the reconstruction of the attenuation and absorption coefficients in a stationary linear transport equation from knowledge of albedo operator in dimension $n\geq 3$ on a Riemannian manifold in the presence of a magnetic field. We show the direct boundary value problem is well posed under two types of subcritical conditions. We obtain uniqueness and non-uniqueness results of the reconstruction under some restrictions. Finally, stability estimates are also established.
\end{abstract}
\maketitle

\section{Introduction}
In this article we study an inverse problem of radiative transfer on a Riemannian manifold in the presence of a magnetic field. More precisely, we consider the problem of recovering the absorption and scattering properties
of a medium from the measurements of the response to near-infrared light of the density of particles at the boundary. This problem is also called {\it\bfseries optical tomography} and has been applied
to the problems of medical imaging; see \cite{A} for more details. There are many other applications of the optical tomography such as diffused tomography \cite{SGKZ}, atmospheric remote sensing \cite{Bi}, nuclear physics \cite{MC}.

Mathematically, radiative transfer is governed by the fundamental law that describes the scattering of a low-density beam of particles from higher-density material. This law is essentially a balance equation that is called {\it\bfseries transport equation}. When the density of particles is low enough that interaction between particles may be neglected, the transport equation is called {\it\bfseries linear}. The earliest investigation of linear transport equation were carried out due to its appearance in the theory of radiative transfer, in a simple neutron diffusion situation, in the theory of plasmas, as well as in sound propagation theory. When the scattering process is independent of time, the transport equation is called {\it\bfseries stationary}. In this paper our main interest lies in the stationary linear transport equation.

In many cases one has to consider the transport equation in the presence of a magnetic field. For instance, the particles used in optical tomography may be charged during their migration and, with the existence of a magnetic field, experiencing a magnetic force. Similar situation occurs in the transport of charged plasmas when embedded into a magnetic filed. Therefore, it is necessary to study the above inverse radiative transfer problem with the existence of a magnetic field. In the following we shall consider this problem in a more general setting, that is, we shall consider the inverse problem of a radiative transfer for stationary linear transport equation on a Riemannian manifold in the presence of a magnetic field.

Now we describe the mathematical model of this problem. Suppose the medium is a compact $n$-dimensional smooth manifold $M$ with boundary endowed with Riemannian metric $g$. Let
$\pi:TM\to M$ denote the canonical projection,
$\pi:(x,\xi)\mapsto x$,
$x\in M$, $\xi\in T_xM$.
Denote by $\omega_0$ the canonical symplectic form on $TM$, which
is obtained by pulling back the canonical symplectic form of $T^*M$
via the Riemannian metric.
Let $H:TM\to\mathbb R$ be defined by
$$
H(x,\xi)=\frac 12|\xi|^2_{g(x)},\quad (x,\xi)\in TM.
$$
The Hamiltonian flow of $H$ with respect to $\omega_0$
gives rise to the geodesic flow of $(M,g)$.
Let $\Omega$ be a closed $2$-form on $M$ and consider
the new symplectic form $\omega$ defined as
$$
\omega=\omega_0+\pi^*\Omega.
$$
The Hamiltonian flow of $H$ with respect to $\omega$ gives rise to
the {\it\bfseries magnetic geodesic flow}
$\phi_t:TM\to TM$.
This flow models the motion of
a unit charge of unit mass in a magnetic field
whose {\it\bfseries Lorentz force} $Y:TM\to TM$
is the bundle map uniquely determined by
$$
\Omega_x(\xi,\eta)=\langle Y_x(\xi),\eta\rangle
$$
for all $x\in M$ and $\xi,\eta\in T_xM$. Every trajectory of the magnetic flow
is a curve on $M$ called {\it\bfseries magnetic geodesic}. Magnetic geodesics satisfy Newton's law of motion
\begin{equation}\label{m-geodesic}
\nabla_{\dot\gamma}\dot\gamma=Y(\dot\gamma).
\end{equation}
Here $\nabla$ is the Levy-Civita connection of $g$. The triple $(M,g,\Omega)$ is said to be a {\it\bfseries magnetic system}. In the absence of magneti field, that is $\Omega=0$, we recover the ordinary geodesic flow
and ordinary geodesics. Note that on the magnetic geodesics time is not reversible, unless $\Omega=0$; note also that to ensure each magnetic geodesic has unit tangent vectors, a specific energy level needs to be specified, from now upon we assume this has been done so that the tangent vectors to each magnetic geodesic is of unit length.

Magnetic flows were firstly considered in \cite{AS, Ar} and it was shown in \cite{ArG, Ko, N1, N2, NS, PP} that they are related to dynamical systems, symplectic geometry, classical mechanics and mathematical mechanics. The inverse problems related to magnetic flows were studied in \cite{Ain,DPSU,DU}

Now we consider stationary linear transport equation for magnetic system $(M,g,\Omega)$. Let $(x,\xi)$ is a point in the sphere bundle $SM$ and denote by $u(x,\xi)$ the density of particles at position $x$ with velocity $\xi$. In magnetic Riemannian manifold function $u$ satisfies the stationary linear transport equation
\begin{equation}\label{mag-tr-eq}
-\mathbf G_{\mu}u(x,\xi)-a(x,\xi)u(x,\xi)+\int_{S_x M}k(x,\eta,\xi)u(x,\eta)\,d\sigma_{x}(\eta)=0,
\end{equation}
where $d\sigma_{x}$ is the volume form on $S_x M$ determined by the metric $g$ at point $x$. Here and further by $\mathbf G_\mu$ we denote the generator of the magnetic flow.

Let us give the short explanation of the transport equation \eqref{mag-tr-eq}. The first term on the left models the motion of a particles, along the magnetic geodesics of $(M,g,\Omega)$, that does not interact with the material. The second term describes the loss of particles at $(x,\xi)$ while scattering to another direction or while absorption, quantified by function $a(x,\xi)$ which we call {\it\bfseries attenuation coefficient}. The third term describes production of particles at $(x,\xi)$ scattering from other directions. The kernel $k(x,\eta,\xi)$, called {\it\bfseries scattering coefficient}, represents the probability of a particle at $(x,\eta)$ scattering to $(x,\xi)$.

When the metric is Euclidean, then \eqref{mag-tr-eq} is a model for transport in a medium with constant index of refraction. The case of the metric conformal to the Euclidean corresponds to transport with varying, isotropic index of refraction. In this case \eqref{mag-tr-eq} can be derived as a limiting case of Maxwell's equations with isotropic permeability, if $\Omega=0$. For more details see e.g. \cite{B}. For general Riemannian metric, \eqref{mag-tr-eq} is considered as a model for transport in a medium with varying, anisotropic index of refraction.

Define the incoming and outgoing sphere bundles on $\p M$ as
$$
\p_{\pm}SM=\{(x,\xi)\in SM|x\in\p M,\pm\langle\xi,\nu(x)\rangle>0\},
$$
here $\nu$ is the inward unit normal vector to $\p M$. The medium is probed with a given radiation $u_+(x,\xi)$, which is a function on $\p_+SM$. Let $u$ be the solution to \eqref{mag-tr-eq} with boundary condition $u|_{\p_+SM}=u_+$, assuming it exists. Then we define the {\it\bfseries albedo operator}
$$
\mathcal A:u_+\to u|_{\p_-SM},
$$
which takes incoming flux $u_+$ to outgoing flux $\mathcal A(u_+)=u|_{\p_- SM}$ on the boundary. The goal is to recover the attenuation coefficient $a(x,\xi)$ and the scattering coefficient $k(x,\eta,\xi)$ from the knowledge of albedo operator $\mathcal{A}$.

There have been a lot of papers devoted to this problem, and we list some of these results here. In Euclidean case, uniqueness of this recovery has been considered under various assumptions in \cite{CS,MST,ST,SU}. Stability of this reconstruction process is studied in \cite{BJ,MST2}. This problem have also been investigated on manifolds. In \cite{M}, was considered the problem of reconstructing these two coefficients on a Riemannian manifold equipped with a known metric, and was proved uniqueness of the reconstruction process under some geometric assumptions. A stability result in Riemannian setting was later showed in \cite{MST3}.

In this paper we inquire the problem of reconstruction the attenuation coefficient $a(x,\xi)$ and the scattering coefficient $k(x,\eta,\xi)$ for all $x\in M$ and $\xi,\eta\in S_x M$, from albedo operator in the presence of a magnetic field. We also investigate the natural problem of determining the metric $g$ and the magnetic field $\Omega$ from $\mathcal A$.

The structure of this paper is as follows. In Section 2 we state our main results on uniqueness and non-uniqueness, as well as determination of the metric from albedo operator. In Section 3.1 we reduce well-posedness of the direct problem to an integral equation, and in Section 3.2 we show that the direct problem is well posed under two types of subcritical conditions. This makes albedo operator $\mathcal{A}$ a bounded operator between appropriate function spaces. Section 4 is devoted to computing the distribution kernel of albedo operator. In Section 5 we extract useful expressions from this distributional kernel. Section 6 contains the proofs of uniqueness and non-uniqueness results in recovering the attenuation coefficient $a$ and the scattering coefficient $k$. In Section 7 we we state our main result on stability. Section~8 is devoted to preparatory estimates, and Section~9 is devoted to the proof of the stability result.

\noindent{\bf Acknowledgements.} The first author would like to express his acknowledgements
to Prof. Nurlan S. Dairbekov for all his help and encouragement. Both authors
would like to express their acknowledgements to Prof. Gunther Uhlmann for constant
assistance and support. The work of both authors was partially supported by NSF.

\section{Statement of the main results on uniqueness and non-uniqueness}
Let $(M,g,\Omega)$ be a magnetic system. For a given $(x,\xi)\in SM$, we define $\tau_+(x,\xi)$ (resp. $\tau_-(x,\xi)$) to be the positive (resp. negative) time when the unit speed magnetic geodesic $\gamma_{x,\xi}$, with $\gamma_{x,\xi}(0)=x$ and $\dot\gamma_{x,\xi}(0)=\xi$, exits the manifold $M$. Also, define $\tau=\tau_+-\tau_-$. In general, $\tau_\pm$ may be infinite. So, we assume that the magnetic system $(M,g,\Omega)$ is {\it\bfseries simple}, which ensures that $\tau_\pm$ are well-defined, finite, and moreover, continuous on $SM$. By simplicity of the magnetic system we mean that
\begin{enumerate}
\item[1.] The boundary $\p M$ is strictly magnetic convex.
\item[2.] The exponential map $\exp_x^\mu:(\exp_x^\mu)^{-1}M\to M$ is a $C^1$-diffeomorphism for every $x\in M$.
\end{enumerate}
In this case, $M$ is diffeomorphic to the unit ball of $\mathbb R^n$ and therefore $\Omega$ is exact i.e., is of the form
$$
\Omega=dm
$$
where $m$ is a 1-form on $M$ --- {\it\bfseries magnetic potential}. We call $(M,g,m)$ a {\it\bfseries simple magnetic system} on $M$.. The notion of simplicity arises naturally in the context of the boundary rigidity problem \cite{DPSU,Mi}. Since $\tau_+$ and $\tau_-$ are finite functions, we define the following function on $SM$
$$
\tau(x,\xi)=\tau_+(x,\xi)-\tau_-(x,\xi),\quad(x,\xi)\in SM.
$$
We also introduce the following volume form on $SM$
$$
d\Sigma^{2n-1}(x,\xi)=d\sigma_x(\xi)\,d\vol_g(x),\quad (x,\xi)\in SM
$$
where $d\vol_g$ is the volume form on $M$ determined by the metric $g$.

The pair $(a,k)$ is said to be {\it\bfseries admissible} if
$$
\begin{cases}0\le a \in L^\infty(SM),\\
0 \le k(x,\eta,\cdot) \in L^1 (S_xM), \text{ for a.e. }(x,\eta)\in SM,\\
\sigma_p(x,\eta) = \int_{S_x M}k(x,\eta,\xi)\,d\sigma_x(\xi)\in L^\infty(SM).
\end{cases}
$$
Admissibility condition guarantees that the second and third terms in \eqref{mag-tr-eq} are bounded operators. But the first term is unbounded. The direct problem \eqref{mag-tr-eq} with $u|_{\p_+SM}=u_+$ can be reduced to non-homogeneous equation with homogeneous boundary condition. Then we say that the direct problem \eqref{mag-tr-eq} with $u|_{\p_+SM}=u_+$ is {\it\bfseries well posed}, if it has a unique solution.

In particular, the direct problem is well posed if either one of the following two {\it\bfseries subcritical} conditions hold:
\begin{equation}\label{subcritical1}
\|\tau\sigma_p\|_{L^{\infty}(SM)}<1,
\end{equation}
\begin{equation}\label{subcritical2}
a(x,\xi)-\sigma_p(x,\xi)\ge0\text{ for a.e. }(x,\xi)\in SM,
\end{equation}
see Proposition~\ref{well-posed-subcritical1} and Proposition~\ref{well-posed-subcritical2}. These results generalize previous results in \cite{BJ,CS,DL,M,MK,RS}.

Even for the case of Euclidean space, albedo operator does not uniquely determine $a$ without further restriction on $a$; see \cite{CS}. Our first main result says that, under the assumption that the attenuation coefficient $a=a(x)$ is isotropic, that albedo operator determines admissible pair $(a,k)$. This result generalizes corresponding results obtained in \cite{CS,M}
\begin{Theorem}\label{thA}
Let $(M,g,m)$ be a simple magnetic system and $\dim M\geq 3$. Assume that the pair $(a,k)$ with isotropic attenuation coefficient is admissible, and that the direct problems is well posed. Then the corresponding albedo operator $\mathcal A$ determines the pair $(a,k)$ uniquely.
\end{Theorem}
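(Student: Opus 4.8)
The plan is to reconstruct the distributional kernel of the albedo operator $\mathcal A$ and exploit its singular structure. The starting point is the method of characteristics: integrating \eqref{mag-tr-eq} along magnetic geodesics shows that the kernel of $\mathcal A$ splits into terms of decreasing singularity. The leading term encodes the ballistic part (particles that traverse $M$ without scattering) and carries the line integral of the attenuation $a$ along magnetic geodesics; the next term encodes single scattering and carries $k$; higher-order terms, corresponding to multiple scattering, are progressively more regular. Since $\dim M\geq 3$, these singularities live on strata of distinct dimensions and can be separated by testing against suitable sequences of boundary data concentrating near prescribed incoming directions, exactly as in the Euclidean arguments of \cite{CS} and the Riemannian argument of \cite{M}. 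I would first carry out this kernel computation (the content Section~4 promises) to isolate a well-defined ballistic factor and a single-scattering factor.

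Next I would extract from the ballistic term the quantity
\begin{equation}\label{planE}
E(x,\xi)=\exp\left(-\int_{\tau_-(x,\xi)}^{0}a(\gamma_{x,\xi}(t),\dot\gamma_{x,\xi}(t))\,dt\right),
\end{equation}
which is read off by concentrating incoming data so that only the unscattered contribution survives in the limit. Thus $\mathcal A$ determines the magnetic X-ray transform of $a$ along every magnetic geodesic joining two boundary points. Here is where the hypothesis that $a=a(x)$ is \emph{isotropic} becomes decisive: with $a$ independent of direction, recovering $a$ reduces to inverting the (scalar) magnetic X-ray transform on the simple magnetic system $(M,g,m)$. By the injectivity of this transform on simple magnetic systems (the analogue of the result of Dairbekov--Paternain--Stefanov--Uhlmann \cite{DPSU}, available precisely because simplicity gives a $C^1$-diffeomorphic exponential map and strict magnetic convexity of $\p M$), the function $a$ is uniquely determined. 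Without isotropy this step fails, which is consistent with the non-uniqueness phenomenon noted after \cite{CS}.

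With $a$ in hand, the attenuation factors appearing in the single-scattering term are known explicitly, so dividing them out of the single-scattering factor yields $k(x,\eta,\xi)$ pointwise for a.e. $(x,\eta,\xi)$. Concretely, the single-scattering kernel has the form of $k$ integrated against the two attenuation exponentials along the broken magnetic geodesic through a scattering point; since $a$ is now determined, these exponentials are known, and the map $(x,\eta,\xi)\mapsto$ (scattering point, broken geodesic) is a diffeomorphism onto its image by simplicity, so $k$ is recovered by inverting this change of variables. This completes the determination of the admissible pair $(a,k)$.

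The main obstacle I anticipate is the clean separation of singularities in the magnetic setting. Two features complicate the classical Euclidean picture: the magnetic geodesic flow is not time-reversible (as the excerpt stresses), so the incoming and outgoing parametrizations are genuinely asymmetric and one must track orientations carefully; and the geometry of the flow, rather than straight lines, governs the Jacobians in the change of variables, so one must verify that the relevant maps remain diffeomorphisms and that the singular supports of the successive terms indeed sit on strata of strictly decreasing dimension. Establishing these transversality and dimension-counting facts — ensuring the ballistic term's singularity dominates, that single scattering is the next stratum, and that multiple scattering is strictly more regular — is the technical heart of the argument and is where the condition $\dim M\geq 3$ is used.
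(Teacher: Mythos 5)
Your proposal follows essentially the same route as the paper: decompose the Schwartz kernel of $\mathcal A$ into ballistic, single-scattering, and more regular parts, extract the magnetic ray transform of $a$ from the ballistic term, invert it for isotropic $a$ via \cite{DPSU}, and then divide the now-known attenuation exponentials out of the single-scattering term to recover $k$ almost everywhere (the paper notes only that this gives $k$ on the full-measure set where $\eta\neq\pm\xi$, which suffices since $k\in L^1$). The technical obstacles you flag --- separating the singular strata by concentration of boundary data and the dimension count requiring $\dim M\geq 3$ --- are exactly what the paper's Propositions on extracting from the kernel address.
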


The case when $a$ is anisotropic was considered in \cite{ST} for Euclidean space. It was shown that knowing the albedo operator, one can uniquely recover the class of ``gauge equivalent" admissible pairs $(a,k)$. This was extended to Riemannian manifolds in \cite{MST}.

Now, let us recall what gauge equivalence means. The notion of gauge equivalence, stated in \cite{ST} and extended to Riemannian manifolds in \cite{MST}, is valid in the presence of a magnetic field. Let $w\in L^\infty(SM)$ be positive with $1/w\in L^\infty(SM)$, $\mathbf G_\mu w\in L^\infty(SM)$ and such that $w|_{\p SM}=1$. Set
\begin{equation}\label{gauge-eq}
\tilde a(x,\xi)=a(x,\xi)-\mathbf G_\mu\log w(x,\xi),\quad \tilde k(x,\eta,\xi)=\frac{w(x,\xi)k(x,\eta,\xi)}{w(x,\eta)}.
\end{equation}
Then $u$ satisfies \eqref{mag-tr-eq} if and only if $\tilde u=w u$ satisfies
$$
-\mathbf G_{\mu}\tilde u(x,\xi)-\tilde a(x,\xi)\tilde u(x,\xi)+\int_{S_x M}\tilde k(x,\eta,\xi)\tilde u(x,\eta)\,d\sigma_x(\eta)=0.
$$
Note that the albedo operator $\mathcal A$ for $(a,k)$ is equal to the albedo operator $\tilde{\mathcal A}$ for $(\tilde a,\tilde k)$, since $u=\tilde u$ on $\p(SM)$.

We say that two admissible pairs $(a,k)$ and $(\tilde a,\tilde k)$ are {\it\bfseries gauge equivalent} if there exists a positive function $w\in L^\infty(SM)$ with $1/w\in L^\infty(SM)$, $\mathbf G_\mu w\in L^\infty(SM)$ and $w|_{\p SM}=1$ such that \eqref{gauge-eq} holds. We denote this relation (which is an equivalence relation) by $(a,k)\sim(\tilde a,\tilde k)$.

In this paper we prove the following result which is a generalization of corresponding results in \cite{MST,ST}

\begin{Theorem}\label{thB}
Let $(M,g,m)$ be a simple magnetic system with $\dim M\geq 3$. Assume that the pairs $(a,k)$ and $(\tilde a,\tilde k)$ are admissible, and that their corresponding direct problems are well posed. Then $\mathcal A=\tilde{\mathcal A}$ if and only if $(a,k)\sim(\tilde a,\tilde k)$.
\end{Theorem}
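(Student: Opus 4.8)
The plan is to establish Theorem~\ref{thB} by analyzing the distributional (Schwartz) kernel of the albedo operator $\mathcal A$, following the strategy pioneered by Choulli--Stefanov and extended to anisotropic attenuation by Stefanov--Tamasan and McDowall--Stefanov--Tamasan. The backward implication is elementary: if $(a,k)\sim(\tilde a,\tilde k)$ via a gauge $w$, then since $u=\tilde u=wu$ on $\partial(SM)$ (because $w|_{\partial SM}=1$), the albedo operators coincide, exactly as remarked in the excerpt preceding the statement. All the work lies in the forward implication, so I would assume $\mathcal A=\tilde{\mathcal A}$ and aim to construct the gauge $w$.

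First I would recall from Section~4 the explicit form of the Schwartz kernel $\alpha(x,\xi,y,\eta)$ of $\mathcal A$ as a distribution on $\partial_-SM\times\partial_+SM$, which decomposes into a sum of terms of increasing regularity:
\begin{equation}\label{kernel-decomp}
\alpha=\alpha_0+\alpha_1+\alpha_2,
\end{equation}
where $\alpha_0$ is the most singular ``ballistic'' term supported on the set where $(x,\xi)$ and $(y,\eta)$ lie on the same magnetic geodesic, $\alpha_1$ is the ``single-scattering'' term carrying the scattering kernel $k$, and $\alpha_2$ is a more regular remainder. The key analytic fact, established in Section~5, is that these terms have distinct singular supports and orders of singularity, so that $\mathcal A=\tilde{\mathcal A}$ forces the equality of each piece separately: $\alpha_0=\tilde\alpha_0$ and $\alpha_1=\tilde\alpha_1$. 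From $\alpha_0=\tilde\alpha_0$ one extracts that the attenuation integrated along each magnetic geodesic agrees for the two systems; that is, the ``beam transforms'' of $a$ and $\tilde a$ coincide along every magnetic geodesic joining boundary points. From $\alpha_1=\tilde\alpha_1$, after dividing by the matching ballistic weights and using $\dim M\geq 3$ so that the single-scattering term is genuinely distinguishable, one extracts a pointwise relation between $k$ and $\tilde k$ weighted by exponentials of partial attenuation integrals.

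The main step is then to manufacture the gauge $w$ from the equality of the attenuation beam transforms. Define, for $(x,\xi)\in SM$,
$$
w(x,\xi)=\exp\Bigl(\int_{\tau_-(x,\xi)}^{0}\bigl[a-\tilde a\bigr]\bigl(\phi_t(x,\xi)\bigr)\,dt\Bigr),
$$
i.e. the integrating factor that accumulates the difference $a-\tilde a$ backward along the magnetic flow from the entry point. By construction $w$ is positive, $w$ and $1/w$ are bounded (since $a,\tilde a\in L^\infty$ and $\tau$ is bounded by simplicity), and differentiating along the flow gives $\mathbf G_\mu\log w=a-\tilde a$, which is precisely the first gauge relation $\tilde a=a-\mathbf G_\mu\log w$ in \eqref{gauge-eq}. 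The equality of the full beam transforms (from $\alpha_0=\tilde\alpha_0$) is exactly what forces $w|_{\partial SM}=1$: at an exit point one has $\tau_-=0$, so $w=1$ there, while at a point of $\partial_+SM$ the integral over the whole geodesic of $a-\tilde a$ vanishes, again yielding $w=1$. Finally I would substitute this $w$ into the relation obtained from $\alpha_1=\tilde\alpha_1$ and check that it reduces to $\tilde k(x,\eta,\xi)=w(x,\xi)k(x,\eta,\xi)/w(x,\eta)$, completing the verification that $(a,k)\sim(\tilde a,\tilde k)$.

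The hard part will be the singularity analysis that justifies separating \eqref{kernel-decomp} into its components and the extraction of the pointwise scattering relation from $\alpha_1=\tilde\alpha_1$. In the magnetic setting the geodesics are no longer time-reversible and the flow $\phi_t$ replaces the straight-line translation of the Euclidean case, so I must verify that the change of variables linearizing the single-scattering term remains a valid diffeomorphism with controlled Jacobian; this is where simplicity of the magnetic system and the dimensional hypothesis $\dim M\geq 3$ are essential, the latter guaranteeing that the single-scattering kernel is supported on a lower-dimensional set distinguishable from the ballistic part. The remainder term $\alpha_2$ must be shown to be strictly more regular than $\alpha_1$ so that it does not interfere; handling this regularity gap carefully, with the magnetic exponential map in place of the Euclidean one, is the principal technical obstacle.
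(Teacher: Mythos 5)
Your proposal follows essentially the same route as the paper: Propositions 5.1 and 5.2 there extract exactly the beam transform of $a$ and the attenuated single-scattering data from the kernel decomposition $\alpha=\alpha_1+\alpha_2+\alpha_3$, and the gauge is built as $w=\exp\bigl(\int_{\tau_-(x,\xi)}^{0}(a-\tilde a)(\phi_t(x,\xi))\,dt\bigr)$, precisely your integrating factor, after which the two gauge relations are verified just as you describe. The only slip is that you have swapped the two boundary components when checking $w|_{\partial SM}=1$: it is on $\partial_+SM$ that $\tau_-=0$ makes $w=1$ trivially, and on $\partial_-SM$ that one needs the vanishing of the full beam transform of $a-\tilde a$ supplied by $\alpha_1=\tilde\alpha_1$; the conclusion is unaffected.
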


Now, assume that the attenuation coefficient depends only on the line determined by vector $\xi$, that is
\begin{equation}\label{a-symm}
a(x,\xi)=a(x,-\xi),\quad x\in M,\quad\xi\in S_x M,
\end{equation}
and assume that the scattering coefficient is positive and symmetric in the following sense:
\begin{equation}\label{k-symm}
k(x,\eta,\xi)=k(x,\xi,\eta),\quad x\in M,\quad\xi,\eta\in S_x M.
\end{equation}
In this case we prove the following uniqueness result generalizing corresponding results obtained in \cite{MST,ST}.
\begin{Theorem}\label{thC}
Let $(M,g,m)$ be a simple magnetic system and $\dim M\geq 3$. Let $(a,k)$ and $(\tilde a,\tilde k)$ be admissible pairs, and assume either \eqref{subcritical1} or \eqref{subcritical2} holds.
\begin{itemize}
\item[(a)] Suppose that $k,\tilde k>0$ satisfy \eqref{k-symm}, then $\mathcal A=\tilde{\mathcal A}$ if and only if $k=\tilde k$ and  $a=\tilde a+\mathbf G_\mu h$ for some function $h$ on $M$ vanishing on $\p M$.
\item[(b)] Suppose that $k,\tilde k>0$ satisfy \eqref{k-symm} and $a,\tilde a$ satisfy \eqref{a-symm}, then $\mathcal A=\tilde{\mathcal A}$ if and only if $(a,k)=(\tilde a,\tilde k)$.
\end{itemize}
\end{Theorem}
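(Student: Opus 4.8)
The plan is to deduce everything from Theorem~\ref{thB}, which already identifies the equality $\mathcal A=\tilde{\mathcal A}$ with the gauge equivalence $(a,k)\sim(\tilde a,\tilde k)$; the only remaining work is to use the symmetry hypotheses to pin down the gauge function $w$ of \eqref{gauge-eq}. The subcritical assumptions ensure that both direct problems are well posed (Proposition~\ref{well-posed-subcritical1} and Proposition~\ref{well-posed-subcritical2}), so Theorem~\ref{thB} is available.

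For part (a) I would dispatch the ``if'' direction first: given $k=\tilde k$ and $a=\tilde a+\mathbf G_\mu h$ with $h$ a function on $M$ vanishing on $\p M$, set $w=e^{h}$. This $w$ depends only on the base point, satisfies $w,1/w\in L^\infty(SM)$, $\mathbf G_\mu w=w\,\mathbf G_\mu h=w(a-\tilde a)\in L^\infty(SM)$, and $w|_{\p SM}=1$, so it is an admissible gauge; plugging it into \eqref{gauge-eq} returns $\tilde a=a-\mathbf G_\mu h$ and $\tilde k=k$ (the ratio $w(x,\xi)/w(x,\eta)$ equals $1$ because $w$ is direction-independent), giving $(a,k)\sim(\tilde a,\tilde k)$ and hence $\mathcal A=\tilde{\mathcal A}$. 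For the converse I would feed $\mathcal A=\tilde{\mathcal A}$ into Theorem~\ref{thB} to produce a gauge $w$ with
$$
\tilde k(x,\eta,\xi)=\frac{w(x,\xi)}{w(x,\eta)}\,k(x,\eta,\xi),
$$
and then apply \eqref{k-symm}. Using symmetry of $\tilde k$ and then of $k$,
$$
\tilde k(x,\eta,\xi)=\tilde k(x,\xi,\eta)=\frac{w(x,\eta)}{w(x,\xi)}\,k(x,\xi,\eta)=\frac{w(x,\eta)}{w(x,\xi)}\,k(x,\eta,\xi);
$$
dividing by $k>0$ forces $w(x,\xi)^2=w(x,\eta)^2$, and since $w>0$ this gives $w(x,\xi)=w(x,\eta)$ for almost every pair. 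Hence $w$ is a.e. independent of direction, so $w=w(x)$, which immediately yields $\tilde k=k$ and $a=\tilde a+\mathbf G_\mu\log w$; taking $h=\log w$ (bounded on $M$, vanishing on $\p M$) finishes part (a).

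For part (b) the ``if'' direction is immediate, and for the converse I would first apply part (a)---whose hypotheses are contained in those of (b)---to get $k=\tilde k$ and $a=\tilde a+\mathbf G_\mu h$ with $h=h(x)$ vanishing on $\p M$. The key structural fact I would use is that for a function lifted from the base the magnetic generator acts purely horizontally, $\mathbf G_\mu h(x,\xi)=dh_x(\xi)$, which is linear and hence odd in $\xi$; this oddness persists despite the magnetic field because the Lorentz term in $\mathbf G_\mu$ lives in the vertical derivative and annihilates functions of $x$ alone. On the other hand, from $\mathbf G_\mu h=a-\tilde a$ and the evenness \eqref{a-symm} of $a$ and $\tilde a$ I get $\mathbf G_\mu h(x,-\xi)=a(x,-\xi)-\tilde a(x,-\xi)=a(x,\xi)-\tilde a(x,\xi)=\mathbf G_\mu h(x,\xi)$, so $\mathbf G_\mu h$ is also even. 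Being simultaneously odd and even it vanishes, so $dh\equiv0$, $h$ is constant, and $h|_{\p M}=0$ gives $h\equiv0$; thus $a=\tilde a$ and $(a,k)=(\tilde a,\tilde k)$.

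The substantive analysis is entirely inside Theorem~\ref{thB}, so what remains are two short structural points. The first is that \eqref{k-symm} together with positivity of the kernels rigidly forces the gauge to be direction-independent; this is exactly where $k>0$ cannot be dropped. The second, which I expect to be the delicate point to phrase correctly, is the parity of $\mathbf G_\mu$ on functions pulled back from $M$: one must check that the magnetic correction to the geodesic generator does not disturb the oddness of $\mathbf G_\mu h$, for it is the clash between this oddness and the evenness coming from \eqref{a-symm} that annihilates $h$. With these two facts in hand, both equivalences reduce to the elementary manipulations above.
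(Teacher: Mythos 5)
Your proposal is correct and follows essentially the same route as the paper: invoke Theorem~\ref{thB} to get a gauge $w$, use the symmetry \eqref{k-symm} together with $k,\tilde k>0$ to force $w(x,\xi)^2=w(x,\eta)^2$ and hence $w=w(x)$, and then for part (b) play the evenness of $a-\tilde a$ from \eqref{a-symm} against the oddness in $\xi$ of $\mathbf G_\mu$ applied to a function of the base point. Your write-up is in fact slightly more careful than the paper's, which leaves the ``if'' directions and the verification that the Lorentz (vertical) part of $\mathbf G_\mu$ annihilates functions of $x$ implicit.
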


The example of a situation where symmetry \eqref{k-symm} holds is when scattering coefficient depends on the angle of scattering, that is, $k(x,\xi,\eta)=k(x,\theta(\xi,\eta))$ where $\theta(\xi,\eta)$ is the angle between $\xi$ and $\eta$ at $x$.

In \cite{ST}, explicit reconstruction formulas were given if the pair $(a,k)$ satisfies symmetry and continuity assumptions in Euclidean case. For Riemannian setting these formulas were derived in \cite{MST}. The proof was based on the reversibility property of geodesic flow (in Riemannian case), which does not hold for magnetic flow. In general, magnetic flows are not reversible. It would be interesting if one derives explicit formulas to express $(a,k)$ under symmetry assumption in the presence of a magnetic field.

\section{Preliminaries and the direct problems}

In this section we show well posedness of the direct problem \eqref{mag-tr-eq} under two subcritical conditions \eqref{subcritical1} and \eqref{subcritical2} respectively. As a result, albedo operator becomes a well-defined bounded linear operator between appropriate function spaces.

\subsection{Preliminaries}\label{3.1}
We introduce measure $\,d\mu$ on $\p_\pm SM$. Let $\,d\Sigma^{2n-1}$ the volume form on $\p_\pm SM$ obtained by the natural restriction of  $\,d\Sigma^{2n-1}$ to $\p_\pm SM$. We define
$$
d\mu(x',\xi')=|\langle \xi',\nu(x')\rangle|\,d\Sigma^{2n-2}(x',\xi').
$$
We denote by $L^1(\p_\pm SM)$ the corresponding $L^1$ spaces.
\begin{Proposition}\label{Santalo}
If $f \in L^1(SM)$ then
\begin{align*}
\int_{SM} f(x,\xi)d\Sigma^{2n-1}(x,\xi)&=\int_{\p_+ SM}\int_0^{\tau_+(x',\xi')}f(\phi_t(x',\xi'))dtd\mu(x',\xi')\\
&=\int_{\p_- SM}\int^0_{\tau_-(x',\xi')}f(\phi_t(x',\xi'))dtd\mu(x',\xi').
\end{align*}
\end{Proposition}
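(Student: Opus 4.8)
The plan is to deduce both identities from the flow-invariance of the Liouville measure $d\Sigma^{2n-1}$ combined with Stokes' theorem; this is the magnetic analogue of Santal\'o's formula.

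First I would record that the magnetic flow $\phi_t$ preserves $d\Sigma^{2n-1}$. Since $\phi_t$ is the Hamiltonian flow of $H$ with respect to $\omega=\omega_0+\pi^*\Omega$, it preserves $\omega$, hence the volume form $\omega^n$ on $TM$. A pointwise computation in canonical coordinates shows that every mixed term $\omega_0^{n-k}\wedge(\pi^*\Omega)^k$ with $k\ge1$ vanishes for dimensional reasons: each factor $\pi^*\Omega$ contributes only base differentials, so such a term would carry $n+k>n$ of them and must repeat one. Thus $\omega^n=\omega_0^n$, and restricting to the energy level $SM=H^{-1}(1/2)$ identifies the invariant measure with $d\Sigma^{2n-1}$. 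Writing $\mathbf G_\mu$ for the generator, invariance reads $\mathcal L_{\mathbf G_\mu}d\Sigma^{2n-1}=0$, equivalently $d(\iota_{\mathbf G_\mu}d\Sigma^{2n-1})=0$ by Cartan's formula.

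For the first identity I would introduce the transport potential $u(x,\xi)=\int_0^{\tau_+(x,\xi)}f(\phi_t(x,\xi))\,dt$. Differentiating along the flow, using $\tau_+(\phi_s(x,\xi))=\tau_+(x,\xi)-s$, gives $\mathbf G_\mu u=-f$, while $u$ vanishes on $\p_-SM$ (where $\tau_+=0$) and equals the full fibre integral on $\p_+SM$. Invariance yields $(\mathbf G_\mu u)\,d\Sigma^{2n-1}=\mathcal L_{\mathbf G_\mu}(u\,d\Sigma^{2n-1})=d(u\,\iota_{\mathbf G_\mu}d\Sigma^{2n-1})$, so Stokes' theorem gives $-\int_{SM}f\,d\Sigma^{2n-1}=\int_{\p SM}u\,\iota_{\mathbf G_\mu}d\Sigma^{2n-1}$. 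The key local computation is that on $\p SM$ one has $\iota_{\mathbf G_\mu}d\Sigma^{2n-1}=-\langle\xi,\nu(x)\rangle\,d\Sigma^{2n-2}$: the base component of $\mathbf G_\mu$ at $(x,\xi)$ is $\xi$, so the flow crosses $\p M$ at rate $\langle\xi,\nu\rangle$, which is precisely the factor the interior product extracts. Since $u$ vanishes on $\p_-SM$, only $\p_+SM$ survives, where $\langle\xi,\nu\rangle>0$; inserting the definition of $d\mu$ the signs cancel and the first equality follows. Running the same argument with $u(x,\xi)=\int_{\tau_-(x,\xi)}^0 f(\phi_t(x,\xi))\,dt$, for which $\mathbf G_\mu u=+f$ and $u|_{\p_+SM}=0$, produces the second equality. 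Note that I cannot simply invoke time reversal, as the magnetic flow is not reversible.

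Finally I would settle regularity. The identities are first proved for $f\in C(SM)$, where $u$ is Lipschitz --- the glancing set is negligible and $\tau_\pm$ are continuous by simplicity and strict magnetic convexity --- so Stokes' theorem applies. To reach general $f\in L^1(SM)$ I apply the proven identity to $|f|$, which shows $f\mapsto\int_{\p_\pm SM}\int f(\phi_t)\,dt\,d\mu$ is controlled by $\|f\|_{L^1(SM)}$, and conclude by density of $C(SM)$ in $L^1(SM)$. I expect the main obstacle to be the careful bookkeeping of orientations in the identity $\iota_{\mathbf G_\mu}d\Sigma^{2n-1}=-\langle\xi,\nu\rangle\,d\Sigma^{2n-2}$ on $\p SM$, together with justifying Stokes' theorem near the glancing region where $\tau_\pm$ fail to be smooth.
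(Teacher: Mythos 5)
Your argument is correct and is, in substance, the same proof the paper relies on: the paper simply cites \cite[Lemma~A.8]{DPSU} for the first identity and declares the second ``similar,'' and the cited lemma is proved by exactly the Liouville-invariance-plus-Stokes argument you reconstruct (including the observation that $\omega^n=\omega_0^n$ because the mixed terms carry too many base differentials, and the boundary identity $\iota_{\mathbf G_\mu}d\Sigma^{2n-1}=-\langle\xi,\nu\rangle\,d\Sigma^{2n-2}$). Your handling of the second identity via the potential $\int_{\tau_-}^0 f(\phi_t)\,dt$ rather than time reversal is the right move, since the magnetic flow is not reversible.
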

\begin{proof}
The first equality is proved in \cite[Lemma~A.8]{DPSU}. The proof of the second equality is similar.
\end{proof}

We proceed now to solve the boundary value problem \eqref{mag-tr-eq} with
$u|_{\p_+ SM} = u_+ \in L^1(\p_+ SM)$ and shall do so by reformulating the problem as an integral equation. First,
$$
\mathbf G_{\mu} u(x,\xi)=g(x,\xi),  \qquad u|_{\p_+ SM} = 0
$$
has solution
$$
u(x,\xi)=\int_{\tau_-(x,\xi)}^0 g(\phi_s(x,\xi))ds.
$$

Next, let $u_+$ be a function on $\p_+ SM$. Then
$$
\mathbf G_{\mu} u(x,\xi)+a(x,\xi)u(x,\xi) = 0\text{ in }SM,\quad u|_{\p_+ SM}=u_+
$$
has solution $Ju_+$, where
$$
Ju_+(x,\xi) = E(x,\xi,\tau_-(x,\xi),0)u_+(\phi_{\tau_-(x,\xi)}(x,\xi))
$$
and $E(x,\xi,s,t) =\exp{\left(-\int\limits_s^t a (\phi_p(x,\xi))\,dp \right)}$.

\begin{Proposition}\label{J is bounded}
If $u_+\in L^1(\p_+ SM)$, then $Ju_+\in L^1(SM,\tau^{-1}\, d\Sigma^{2n-1})$ and
$$
 \|\tau^{-1} Ju_+\|_{L^1(SM)}\le\|u_+\|_{L^1(\p_+ SM)}.
$$
\end{Proposition}
\begin{proof}
Using Proposition \ref{Santalo},
\begin{multline*}
 \|\tau^{-1} Ju_+\|_{L^1(SM)}\\
\le\int_{\p_+ SM}\int_0^{\tau_+(x',\xi')}\frac{1}{\tau(\phi_t(x',\xi'))}|u_+(\phi_{\tau_-(\phi_t(x',\xi'))}(\phi_t(x',\xi')))|\,dt\,d\mu(x',\xi')\\
=\int_{\p_+ SM}\int_0^{\tau_+(x',\xi')}\frac{1}{\tau_+(x',\xi')}|u_+(x',\xi')|\,dt\,d\mu(x',\xi')\\
=\int_{\p_+ SM}|u_+(x',\xi')|\,d\mu(x',\xi')=\|u_+\|_{L^1(\p_+ SM)}.
\end{multline*}
This implies the statement of the proposition.
\end{proof}

Set $T_0u=-\mathbf G_\mu u-au$ in the distributional sense, and
$$
T_1u(x,\xi)=\displaystyle\int_{S_xM}k(x,\xi',\xi)u(x,\xi')\,d\sigma_{x}(\xi'),\quad T=T_0+T_1
$$
Sometimes we regard $T_{0}$ and $T$ as unbounded operators, in this case we write
$$
\mathbf{T}_{0}f=T_{0}f,\quad \mathbf{T}f=Tf,
$$
with domain
$$
D(\mathbf{T_{0}})=D(\mathbf{T})=\{f\in L^{1}(SM):\mathbf{G}_{\mu}f\in L^{1}(SM),\,f|_{\partial_{+}SM}=0\}.
$$
Then $\mathbf{T}_{0}:D(\mathbf{T}_{0}):\rightarrow L^{1}(SM)$ is closed, one-to-one and onto with a bounded inverse given by
$$
\mathbf{T}^{-1}_{0}f(x,\xi):=-\displaystyle\int^{0}_{\tau_{-}(x,\xi)}E(x,\xi,t,0)f(\phi_{t}(x,\xi))\,dt.
$$

Next we are going to reduce the equation \eqref{mag-tr-eq} to an integral equation.
Equation \eqref{mag-tr-eq} can be rewritten as $T_0 u+T_1 u=0$ and we solve it with $u|_{\p_+ SM} = u_+$.
Multiply $T_0 u+T_1 u=0$ by $E(x,\xi,t,0)$ and integrate it along the magnetic geodesic $\gamma_{x,\xi}$ from $\tau_-(x,\xi)$ to $0$. Taking to account the boundary condition $u|_{\p_+ SM}=u_+$, we get
\begin{equation}\label{part}
(\id+K)u=Ju_+
\end{equation}
where $K$ is the following integral operator
$$
Ku(x,\xi)=\mathbf{T}^{-1}_{0}T_{1}=-\int^0_{\tau_-(x,\xi)}E(x,\xi,t,0)(T_1 u)(\phi_t(x,\xi)))\,dt.
$$
Therefore, since $Ju_+\in L^1(SM,\tau^{-1}\, d\Sigma^{2n-1})$, solving the boundary value problem is equivalent to inverting the operator $\id+K$ on the weighted space $L^1(SM,\tau^{-1}\, d\Sigma^{2n-1})$.

Next step is to show that the well-posedness of the direct problem implies the boundedness of $\tau^{-1}\mathbf T^{-1}$ on $L^1(SM)$.

\begin{Proposition}\label{prop}
Let $(M,g,m)$ be a simple magnetic system. Assume that the pair $(a,k)$  is admissible and that the direct problems is well posed. Then the operator $\tau^{-1}\mathbf T^{-1}$ is bounded on $L^1(TM)$.
\end{Proposition}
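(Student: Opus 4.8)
The plan is to exploit the factorization $\mathbf{T}=\mathbf{T}_{0}(\id+K)$, where $K=\mathbf{T}_{0}^{-1}T_{1}$. Since $\mathbf{T}_{0}$ is boundedly invertible on $L^{1}(SM)$, any $u\in D(\mathbf{T})$ satisfies $\mathbf{T}u=\mathbf{T}_{0}u+T_{1}u=\mathbf{T}_{0}(\id+K)u$, so that, once $\id+K$ is invertible, $\mathbf{T}^{-1}=(\id+K)^{-1}\mathbf{T}_{0}^{-1}$. I would therefore split the claim into two mapping properties: first, that $\mathbf{T}_{0}^{-1}$ sends $L^{1}(SM)$ boundedly into the weighted space $W:=L^{1}(SM,\tau^{-1}d\Sigma^{2n-1})$, whose norm is $\|u\|_{W}=\|\tau^{-1}u\|_{L^{1}(SM)}$; and second, that $(\id+K)^{-1}$ is bounded on $W$. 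Composing these yields $\tau^{-1}\mathbf{T}^{-1}=\tau^{-1}(\id+K)^{-1}\mathbf{T}_{0}^{-1}$ bounded from $L^{1}(SM)$ to $L^{1}(SM)$, which is the assertion.

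The heart of the argument is the first mapping property, namely the estimate $\|\tau^{-1}\mathbf{T}_{0}^{-1}f\|_{L^{1}(SM)}\le\|f\|_{L^{1}(SM)}$. Since $a\ge0$, we have $E(x,\xi,t,0)\le1$ for $t\le0$, whence $|\mathbf{T}_{0}^{-1}f(x,\xi)|\le\int_{\tau_-(x,\xi)}^{0}|f(\phi_{t}(x,\xi))|\,dt$. I would apply the Santaló formula of Proposition~\ref{Santalo} to lift $\int_{SM}\tau^{-1}|\mathbf{T}_{0}^{-1}f|\,d\Sigma^{2n-1}$ to an integral over $\p_{+}SM$ along the trajectories $s\mapsto\phi_{s}(x',\xi')$. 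The decisive point is that $\tau$ is constant along each maximal trajectory: for $(x',\xi')\in\p_{+}SM$ one has $\tau_-(\phi_{s}(x',\xi'))=-s$ and $\tau_+(\phi_{s}(x',\xi'))=\tau_+(x',\xi')-s$, hence $\tau(\phi_{s}(x',\xi'))\equiv\tau_+(x',\xi')$. After the substitution $r=t+s$ the inner double integral becomes $\tau_+^{-1}\int_{0}^{\tau_+}\!\!\int_{0}^{s}|f(\phi_{r})|\,dr\,ds$; switching the order of the $s$ and $r$ integrations produces the factor $(\tau_+-r)/\tau_+\le1$, and a second application of Proposition~\ref{Santalo} returns $\int_{SM}|f|\,d\Sigma^{2n-1}=\|f\|_{L^{1}(SM)}$. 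This is exactly the claimed weighted bound, with constant $1$.

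For the second mapping property I would first verify that $\id+K$ is a bounded operator on $W$. Using the isometry $v\mapsto\tau v$ from $L^{1}(SM)$ onto $W$, boundedness of $K$ on $W$ is equivalent to boundedness of $v\mapsto\tau^{-1}\mathbf{T}_{0}^{-1}T_{1}(\tau v)$ on $L^{1}(SM)$; by the estimate just established this is controlled by $\|T_{1}(\tau v)\|_{L^{1}(SM)}$, and a Fubini computation together with the definition $\sigma_{p}(x,\eta)=\int_{S_xM}k(x,\eta,\xi)\,d\sigma_{x}(\xi)$ gives $\|T_{1}(\tau v)\|_{L^{1}(SM)}\le\|\tau\sigma_{p}\|_{L^{\infty}(SM)}\|v\|_{L^{1}(SM)}$. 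Admissibility yields $\sigma_{p}\in L^{\infty}(SM)$, while simplicity makes $\tau$ continuous on the compact set $SM$ and hence bounded, so $\tau\sigma_{p}\in L^{\infty}(SM)$ and $K$ is bounded on $W$. Well-posedness of the direct problem means precisely that $\id+K$ is a bijection of $W$ (the reduction recorded before the statement); being a bounded bijection of a Banach space, it has a bounded inverse by the open mapping theorem, and combining with the first mapping property closes the proof.

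The main obstacle is the weighted estimate for $\mathbf{T}_{0}^{-1}$ in the second paragraph: the whole proof hinges on choosing $\tau^{-1}$ as the weight and on the invariance $\tau\circ\phi_{s}\equiv\tau_+$ along magnetic geodesics, which is what lets the two applications of the Santaló formula cancel the singular-looking weight and leave a clean constant. Once this is in place, the remaining ingredients—the factorization, the Fubini bound for $T_{1}$, and the open mapping theorem—are routine.
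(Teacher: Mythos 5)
Your proposal is correct and follows essentially the same route as the paper: the factorization $\mathbf T^{-1}=(\id+K)^{-1}\mathbf T_0^{-1}$, the weighted bound $\|\tau^{-1}\mathbf T_0^{-1}f\|_{L^1(SM)}\le\|f\|_{L^1(SM)}$ (the paper's Lemma~\ref{T_0-inverse}, proved by the same Santal\'o argument), and the bounded invertibility of $\id+K$ on $L^1(SM,\tau^{-1}\,d\Sigma^{2n-1})$ coming from well-posedness. The only (harmless) difference is that you justify boundedness of $(\id+K)^{-1}$ via the open mapping theorem, a step the paper simply folds into its reading of the well-posedness hypothesis.
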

\begin{proof}
Recall that solving \eqref{mag-tr-eq} with $u|_{\p_+ SM}\in L^1(\p_+ SM)$ is equivalent to solving \eqref{part}. Proposition~\ref{J is bounded} we have $Ju_+\in L^1(SM,\tau^{-1}\,d\Sigma^{2n-1})$. By hypothesis, the inverse of $\id+K$ exists as a bounded operator on $L^1(SM,\tau^{-1}\,d\Sigma^{2n-1})$. Therefore, writing
$$
\mathbf T^{-1}=(\id+K)^{-1}\mathbf T_0^{-1}:L^1(SM)\to L^1(SM,\tau^{-1}\,d\Sigma^{2n-1}),
$$
we see that $\tau^{-1}\mathbf T^{-1}$ is bounded on $L^1(SM)$.
\end{proof}
We need the following two lemmas studying boundedness of two related operators.
\begin{Lemma}\label{T_0-inverse}
The operator $\tau^{-1}\mathbf T_0^{-1}$ is bounded on $L^1(SM)$ with
$$
\|\tau^{-1}\mathbf T^{-1}_{0}f\|_{L^{1}(SM)}\leq \|f\|_{L^{1}(SM)}.
$$
\end{Lemma}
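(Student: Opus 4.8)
The plan is to reduce the estimate to a pure transit-time computation along magnetic geodesics via the Santaló formula (Proposition~\ref{Santalo}). First I would exploit the sign condition on the attenuation coefficient: since $0\le a\in L^\infty(SM)$, for $t\le0$ we have $E(x,\xi,t,0)=\exp\bigl(-\int_t^0 a(\phi_p(x,\xi))\,dp\bigr)\le1$. Using the explicit formula for $\mathbf T_0^{-1}$ this gives the pointwise bound
$$
|\mathbf T_0^{-1}f(x,\xi)|\le\int_{\tau_-(x,\xi)}^0|f(\phi_t(x,\xi))|\,dt,
$$
so that it suffices to estimate the $L^1(SM)$-norm of $F(x,\xi):=\tau(x,\xi)^{-1}\int_{\tau_-(x,\xi)}^0|f(\phi_t(x,\xi))|\,dt$.

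Next I would apply the first equality in Proposition~\ref{Santalo} to $\int_{SM}F\,d\Sigma^{2n-1}$, which converts the integral into one over $\p_+SM$ of $\int_0^{\tau_+(x',\xi')}F(\phi_s(x',\xi'))\,ds$. The key structural inputs here are that $\tau$ is invariant under the flow, $\tau(\phi_s(x,\xi))=\tau(x,\xi)$ (the magnetic geodesic through $(x,\xi)$ and through $\phi_s(x,\xi)$ is the same curve, hence has the same total sojourn time in $M$), and that on $\p_+SM$ one has $\tau_-(x',\xi')=0$ and $\tau_+(x',\xi')=\tau(x',\xi')$, whence $\tau_-(\phi_s(x',\xi'))=-s$. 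Substituting these into $F(\phi_s(x',\xi'))$ and reparametrizing the inner integral (the change of variables $r=t+s$ inside $\int_{-s}^0|f(\phi_{t+s}(x',\xi'))|\,dt$) collapses it to
$$
F(\phi_s(x',\xi'))=\tau(x',\xi')^{-1}\int_0^s|f(\phi_r(x',\xi'))|\,dr.
$$

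Finally I would apply Tonelli to swap the order of the resulting double time integral $\int_0^{\tau}\!\int_0^s|f(\phi_r)|\,dr\,ds=\int_0^{\tau}|f(\phi_r)|(\tau-r)\,dr$ and use the elementary bound $\tau-r\le\tau$; the surviving factor $\tau$ cancels the weight $\tau^{-1}$, leaving $\int_0^{\tau_+(x',\xi')}|f(\phi_r(x',\xi'))|\,dr$. Integrating over $\p_+SM$ against $d\mu$ and invoking Proposition~\ref{Santalo} once more, now read in reverse, returns exactly $\|f\|_{L^1(SM)}$, which is the claimed inequality. The main obstacle is the careful bookkeeping in the middle step: one must correctly track how $\tau_\pm$ and the flow argument transform under $\phi_s$, and verify that the flow-invariance of $\tau$ together with the reparametrization genuinely decouples the geometry from the time integration so that the Fubini step produces precisely the factor needed to absorb $\tau^{-1}$. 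The sign hypothesis $a\ge0$ (through $E\le1$) and the identity $\tau\circ\phi_s=\tau$ are what make the constant in the estimate equal to $1$.
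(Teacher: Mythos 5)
Your argument is correct and follows essentially the same route as the paper: both use $a\ge 0$ to bound $E\le 1$, the Santal\'o formula of Proposition~\ref{Santalo} to pass to $\p_+SM$, the flow-invariance of $\tau$ together with $\tau_-(\phi_s(x',\xi'))=-s$, and a Fubini/monotonicity step in the two time variables to absorb the weight $\tau^{-1}$. The only cosmetic difference is that the paper first enlarges the inner integral to $\int_0^{\tau_+}|f(\phi_r)|\,dr$ and then integrates the constant in $t$, whereas you swap the order of integration and use $\tau-r\le\tau$; these are equivalent computations.
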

\begin{proof}
For $f\in L^1(SM)$,
\begin{multline*}
 \| \tau^{-1} \mathbf T_0^{-1} f\|_{L^1(SM)}\\
=\int_{\p_+ SM}\int_0^{\tau_+(x',\xi')}\Bigg|\int^0_{\tau_-(\phi_{t}(x',\xi')}E(\phi_t(x',\xi'),s,0) f(\phi_{t+s}(x',\xi'))\,ds\Bigg|\\
\times\displaystyle\frac{1}{\tau(\phi_t(x,\xi))}\,dt\,d\mu(x',\xi')\\
\le\int_{\p_+ SM}\int_0^{\tau_+(x',\xi')}\frac{1}{\tau(\phi_t(x,\xi))}\Bigg|\int^0_{-t}
f(\phi_{t+s}(x',\xi'))\,ds\Bigg|\,dt\,d\mu(x',\xi')\\
\le\int_{\p_+ SM}\int_0^{\tau_+(x',\xi')}\frac{1}{\tau_+(x',\xi')}\int_0^{\tau_+(x',\xi')}\big|f(\phi_s(x',\xi'))\big|\,ds\,dt\,d\mu(x',\xi')\\
= \displaystyle\int_{\p_+ SM}\int_0^{\tau_+(x',\xi')}|f(\phi_s(x',\xi'))|\,ds\,d\mu(x',\xi')=\|f\|_{L^1(SM)}.
\end{multline*}
We are done.
\end{proof}

\begin{Lemma}\label{T1}
The operator $T_{1}\tau$ is bounded on $L^{1}(SM)$ with
$$
\|T_{1}\tau f\|_{L^{1}(SM)}\leq\|\tau\sigma_{p}\|_{L^{\infty}(SM)}\|f\|_{L^{1}(SM)}.
$$
\end{Lemma}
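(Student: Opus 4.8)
The plan is to bound the $L^1$ norm of $T_1\tau f$ directly by unwinding the definition of $T_1$ and applying Fubini together with the definition of $\sigma_p$. First I would write out the target quantity using the volume form $d\Sigma^{2n-1}=d\sigma_x(\xi)\,d\vol_g(x)$ on $SM$:
\begin{align*}
\|T_1\tau f\|_{L^1(SM)}
&=\int_M\int_{S_xM}\left|\int_{S_xM}k(x,\xi',\xi)\,\tau(x,\xi')f(x,\xi')\,d\sigma_x(\xi')\right|d\sigma_x(\xi)\,d\vol_g(x).
\end{align*}
Moving the absolute value inside both integrals (triangle inequality) and using that $k\ge 0$ for an admissible pair, I would bound this above by
\begin{align*}
\int_M\int_{S_xM}\int_{S_xM}k(x,\xi',\xi)\,\tau(x,\xi')\,|f(x,\xi')|\,d\sigma_x(\xi')\,d\sigma_x(\xi)\,d\vol_g(x).
\end{align*}

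Next I would apply Fubini to interchange the order of the two inner sphere integrals, integrating out the $\xi$ variable first. Since only $k(x,\xi',\xi)$ depends on $\xi$, the inner integral collapses to $\int_{S_xM}k(x,\xi',\xi)\,d\sigma_x(\xi)=\sigma_p(x,\xi')$, by the very definition of $\sigma_p$ in the admissibility condition. This yields
\begin{align*}
\|T_1\tau f\|_{L^1(SM)}
\le\int_M\int_{S_xM}\sigma_p(x,\xi')\,\tau(x,\xi')\,|f(x,\xi')|\,d\sigma_x(\xi')\,d\vol_g(x).
\end{align*}
Finally I would estimate the factor $\sigma_p(x,\xi')\tau(x,\xi')$ by its essential supremum $\|\tau\sigma_p\|_{L^\infty(SM)}$ and pull it out of the integral, leaving exactly $\|f\|_{L^1(SM)}$ in the remaining integral. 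This gives the claimed bound.

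The calculation here is genuinely routine, so I do not anticipate a serious obstacle; the only points requiring care are the legitimacy of the Fubini interchange and the sign conventions. Fubini is justified because the integrand $k(x,\xi',\xi)\tau(x,\xi')|f(x,\xi')|$ is nonnegative, using $k\ge 0$, $\tau>0$ (finite by simplicity), and the admissibility hypotheses that keep everything measurable and the relevant norms finite; Tonelli's theorem then permits the interchange without any integrability precondition. The mild subtlety worth flagging is that the kernel in the definition of $T_1$ is written as $k(x,\xi',\xi)$ with the integration variable in the \emph{first} slot, so one must track that it is the integral over the second slot $\xi$ that produces $\sigma_p(x,\xi')=\int_{S_xM}k(x,\xi',\xi)\,d\sigma_x(\xi)$; consistency with the admissibility definition of $\sigma_p$ must be verified, but it matches.
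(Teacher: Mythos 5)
Your proposal is correct and follows essentially the same route as the paper's proof: triangle inequality, Tonelli to integrate out $\xi$ first so that the inner integral collapses to $\sigma_p(x,\xi')$, and then bounding $\tau\sigma_p$ by its essential supremum. Your remark about which slot of $k$ is integrated to produce $\sigma_p$ is a correct and worthwhile sanity check, and it matches the paper's admissibility convention.
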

\begin{proof}
For $f\in L^{1}(SM)$,
\begin{multline*}
\|T_{1}\tau f\|_{L^{1}(SM)}=\int_{M}\int_{S_{x}M}\left|\int_{S_{x}M}k(x,\xi',\xi)\tau(x,\xi')f(x,\xi')\,d\sigma_x(\xi')\right|\,d\sigma_x(\xi)\,d\vol_g(x)\\
\le\displaystyle\int_{M}\int_{S_{x}M}\int_{S_{x}M}k(x,\xi',\xi)\tau(x,\xi')|f(x,\xi')|\,d\sigma_x(\xi') \,d\sigma_x(\xi)\,d\vol_g(x)\\
=\int_{M}\int_{S_{x}M}\sigma_{p}(x,\xi')\tau(x,\xi')|f(x,\xi')|\,d\sigma_x(\xi')\,d\vol_g(x)\\
\le \|\tau\sigma_{p}\|_{L^{\infty}(SM)}\|f\|_{L^{1}(SM)}.
\end{multline*}
The proof is finished.
\end{proof}

\subsection{Well-posedness under subcritical conditions}
We define the function space $\mathcal{W}$ as follows
$$
\mathcal{W} = \{ f\in L^{1}(SM) : \mathbf G_{\mu} f \in L^1(SM), \tau^{-1}f \in L^1(SM)\},
$$
with norm
$$
\|f\|_{\mathcal{W}} = \|\mathbf G_{\mu} f\|_{L^1(SM)} + \|\tau^{-1}f\|_{L^1(SM)}.
$$
The following theorem is proven in \cite{CS} for Euclidean metric and in \cite{M} for the case of Riemannian metric, it enables us to define the trace of a function in $\mathcal{W}$.

\begin{Proposition}\label{th:norm}
If $f(x,\xi)\in \mathcal W$, then $\|f|_{\p_\pm SM}\|_{L^1(\p_\pm SM)}\leq\|f\|_{\mathcal{W}}$.
\end{Proposition}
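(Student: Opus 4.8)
The plan is to combine the fundamental theorem of calculus along the magnetic flow with the Santaló formula of Proposition~\ref{Santalo}. Since $\mathbf G_\mu$ is the infinitesimal generator of $\phi_t$, for a sufficiently regular $f$ the map $t\mapsto f(\phi_t(x,\xi))$ is differentiable with $\frac{d}{dt}f(\phi_t(x,\xi))=\mathbf G_\mu f(\phi_t(x,\xi))$, so that
$$
f(x',\xi')=f(\phi_t(x',\xi'))-\int_0^t\mathbf G_\mu f(\phi_s(x',\xi'))\,ds.
$$
I would first establish the claimed inequality for smooth $f$ and then pass to general $f\in\mathcal W$ by density. This approximation step, together with the verification that $f$ is absolutely continuous along almost every magnetic geodesic (so that the identity above holds a.e., using $\mathbf G_\mu f\in L^1(SM)$), is the one point requiring care and is the main, though standard, obstacle.

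Next I would fix $(x',\xi')\in\p_+SM$, so that $\tau_-(x',\xi')=0$ and the trajectory $\{\phi_t(x',\xi'):0\le t\le\tau_+(x',\xi')\}$ sweeps out the whole magnetic geodesic in $M$. Averaging the displayed identity over $t\in[0,\tau_+(x',\xi')]$ yields
$$
|f(x',\xi')|\le\frac1{\tau_+(x',\xi')}\int_0^{\tau_+(x',\xi')}|f(\phi_t(x',\xi'))|\,dt+\int_0^{\tau_+(x',\xi')}|\mathbf G_\mu f(\phi_s(x',\xi'))|\,ds,
$$
where for the second term I use the bound $\int_0^{\tau_+}\!\int_0^t(\cdots)\,ds\,dt\le\tau_+\int_0^{\tau_+}(\cdots)\,ds$ before dividing by $\tau_+$.

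The key observation is that $\tau$ is constant along each flow line: since $\tau_\pm(\phi_t(x,\xi))=\tau_\pm(x,\xi)-t$, one gets $\tau(\phi_t(x,\xi))=\tau(x,\xi)$, and on $\p_+SM$ this common value is exactly $\tau(x',\xi')=\tau_+(x',\xi')$. Hence $\tfrac1{\tau_+(x',\xi')}=(\tau^{-1})(\phi_t(x',\xi'))$, so the first term above is precisely the flow-average of $\tau^{-1}|f|$ along the trajectory. Integrating the inequality against $d\mu$ over $\p_+SM$ and applying Proposition~\ref{Santalo} term by term, the first term becomes $\|\tau^{-1}f\|_{L^1(SM)}$ and the second becomes $\|\mathbf G_\mu f\|_{L^1(SM)}$; together these give
$$
\|f|_{\p_+SM}\|_{L^1(\p_+SM)}\le\|\tau^{-1}f\|_{L^1(SM)}+\|\mathbf G_\mu f\|_{L^1(SM)}=\|f\|_{\mathcal W}.
$$

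Finally, the estimate on $\p_-SM$ will follow from the time-reversed version of the same computation: for $(x',\xi')\in\p_-SM$ one has $\tau_+(x',\xi')=0$ and $\tau(x',\xi')=-\tau_-(x',\xi')$, one averages over $t\in[\tau_-(x',\xi'),0]$, and one invokes the second identity in Proposition~\ref{Santalo}. The constancy of $\tau$ along trajectories again converts the leading term into the flow-average of $\tau^{-1}|f|$, producing the same bound $\|f\|_{\mathcal W}$.
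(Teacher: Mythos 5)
Your proposal is correct and follows essentially the same route as the paper: a pointwise bound $|f(x',\xi')|\le\int_0^{\tau_+}|\mathbf G_\mu f(\phi_t(x',\xi'))|\,dt+\frac{1}{\tau_+(x',\xi')}\int_0^{\tau_+}|f(\phi_t(x',\xi'))|\,dt$ along each trajectory (which the paper imports from \cite{CS} as inequality \eqref{ineq1}, and which you rederive by averaging the fundamental theorem of calculus over $t$), followed by integration over $\p_\pm SM$ via Proposition~\ref{Santalo}, using the invariance of $\tau$ along flow lines. The only difference is cosmetic: you prove the one-dimensional trace inequality and flag the absolute-continuity-along-a.e.-geodesic issue explicitly, while the paper cites it.
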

\begin{proof}
It was show in \cite{CS} that if $h,\p_t h\in L^1([0,a])$, for some $a>0$, then the following inequality is valid
\begin{equation}\label{ineq1}
|h(0)|\le\|\p_t h\|_{L^1([0,a])}+\frac{1}{a}\|h\|_{|L^1([0,a])}.
\end{equation}
Similarly it can be easily shown that if $h,\p_t h\in L^1([b,0])$, for some $b<0$, then the following inequality is valid
\begin{equation}\label{ineq2}
|h(0)|\le\|\p_t h\|_{L^1([0,a])}-\frac{1}{b}\|h\|_{|L^1([b,0])}.
\end{equation}

Let $f(x,\xi)\in \mathcal W$. Define
$$
h(x',\xi',t)=f(\phi_t(x',\xi')), \quad (x',\xi')\in\p_+ SM,\quad 0\le t\le \tau_+(x',\xi').
$$
It is well known that $\mathbf G_{\mu}=\p\phi_t(x,\xi)/\p t|_{t=0}$.
Then
$$
\mathbf G_{\mu}f(\phi_t(x',\xi'))=\de{}{s}\Big|_{s=0}f(\phi_{t+s}(x',\xi'))=\p_t h(x',\xi',t),
$$
so $\p_t h\in L^1(SM)$. Therefore by Fubini's theorem, $\p_t h\in L^1([0,\tau_+(x',\xi')])$ for almost every $(x',\xi')$ as well as $h\in L^1([0,\tau_+(x',\xi')])$ for almost every $(x',\xi')$. Making use of \eqref{ineq1} we get
\begin{multline*}
|f(x',\xi')|=|f(\phi_0(x',\xi'))|\\
\le\int_0^{\tau_+(x',\xi')}|\mathbf G_{\mu}f(\phi_t(x',\xi'))|\,dt+\frac{1}{\tau_+(x',\xi')}\displaystyle\int_0^{\tau_+(x',\xi')}|f(\phi_t(x',\xi'))|\,dt.
\end{multline*}
Integrating this inequality over $\p_+ SM$ and applying Proposition~\ref{Santalo} to the right-hand side, we finish the proof of theorem for $f|_{\p_+ SM}$. The proof for $f|_{\p_- SM}$ can be done in similar way using \eqref{ineq2}.
\end{proof}

The operator $J$ is a bounded linear operator mapping $L^{1}(\p_{+}SM)$ into $\mathcal{W}$. This is the following proposition.
\begin{Proposition}\label{J}
The operator $J:L^1(\p_+ SM)\to\mathcal W$ is bounded with
$$
\|Ju_+\|_{\mathcal W} \le (1+\|\tau a\|_{L^\infty(SM)})\|u_+\|_{L^1(\p_+ SM)}.
$$
\end{Proposition}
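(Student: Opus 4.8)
The plan is to estimate the two pieces of the $\mathcal W$-norm separately. The weighted bound $\|\tau^{-1}Ju_+\|_{L^1(SM)}\le\|u_+\|_{L^1(\p_+ SM)}$ is already supplied by Proposition~\ref{J is bounded}, so only the term $\|\mathbf G_\mu Ju_+\|_{L^1(SM)}$ requires new work. The key observation is that $Ju_+$ is, by its very construction, the solution of $\mathbf G_\mu u+au=0$ in $SM$ with $u|_{\p_+ SM}=u_+$; hence, in the distributional sense,
\begin{equation*}
\mathbf G_\mu Ju_+=-a\,Ju_+.
\end{equation*}

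First I would record this identity and use it to rewrite $\|\mathbf G_\mu Ju_+\|_{L^1(SM)}=\|a\,Ju_+\|_{L^1(SM)}$. Then I would insert the factor $\tau$ together with its inverse: writing $a\,|Ju_+|=(\tau a)\,(\tau^{-1}|Ju_+|)$ pointwise and bounding $\tau a$ by its essential supremum gives
\begin{equation*}
\|\mathbf G_\mu Ju_+\|_{L^1(SM)}\le\|\tau a\|_{L^\infty(SM)}\,\|\tau^{-1}Ju_+\|_{L^1(SM)}\le\|\tau a\|_{L^\infty(SM)}\,\|u_+\|_{L^1(\p_+ SM)},
\end{equation*}
where the last inequality again invokes Proposition~\ref{J is bounded}. (Here I use that $\tau$ is finite and positive $d\Sigma^{2n-1}$-a.e., which holds since $(M,g,m)$ is simple.)

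Adding the two contributions yields
\begin{equation*}
\|Ju_+\|_{\mathcal W}=\|\mathbf G_\mu Ju_+\|_{L^1(SM)}+\|\tau^{-1}Ju_+\|_{L^1(SM)}\le(1+\|\tau a\|_{L^\infty(SM)})\,\|u_+\|_{L^1(\p_+ SM)},
\end{equation*}
which simultaneously confirms $Ju_+\in\mathcal W$ and establishes the claimed bound, so that $J$ indeed maps $L^1(\p_+ SM)$ boundedly into $\mathcal W$.

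I do not expect a serious obstacle: the whole argument reduces to the defining transport identity for $Ju_+$ combined with the already-proven weighted estimate. The only point that deserves a little care is justifying $\mathbf G_\mu Ju_+=-a\,Ju_+$ as a \emph{distributional} rather than classical identity, since $u_+$ is merely $L^1$; this is precisely the sense in which $Ju_+$ was defined to solve the homogeneous attenuated equation, and admissibility of $a$ ($0\le a\in L^\infty(SM)$) guarantees that $a\,Ju_+\in L^1(SM)$, so the right-hand side is a legitimate element of $L^1(SM)$ and $\mathbf G_\mu Ju_+\in L^1(SM)$ as required.
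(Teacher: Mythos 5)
Your argument is correct and is essentially identical to the paper's own proof: both use the identity $\mathbf G_\mu Ju_+=-a\,Ju_+$ to reduce the $\mathcal W$-norm to $\|aJu_+\|_{L^1}+\|\tau^{-1}Ju_+\|_{L^1}$, bound the first term by $\|\tau a\|_{L^\infty}\|\tau^{-1}Ju_+\|_{L^1}$, and invoke Proposition~\ref{J is bounded}. Your additional remarks on the distributional sense of the identity and the positivity of $\tau$ are sensible but not points the paper dwells on.
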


\begin{proof}
Since $-\mathbf G_{\mu}Ju_+=aJu_+$, the following holds
$$
\|Ju_+\|_{\mathcal W}=\|\tau^{-1}Ju_+\|_{L^1(SM)}+\|a Ju_+\|_{L^1(SM)}\le(1+\|\tau a\|_{L^{\infty}(SM)})\|\tau^{-1}Ju_+\|_{L^1(SM)}.
$$
The proof is complete.
\end{proof}

Now, we prove that under subcritical condition \eqref{subcritical1} the direct problem is well posed.
\begin{Proposition}\label{well-posed-subcritical1}
Assume that the subcritical condition \eqref{subcritical1} holds.
\begin{itemize}
\item[(a)] $K=\mathbf T_0^{-1}T_1$ is bounded on $L^1(SM,\tau^{-1}d\Sigma^{2n-1})$,
the operator norm of $K$ is bounded by $\|\tau\sigma_p\|_{L^\infty(SM)}<1$, and so $\id+K$ is invertible on this space. Therefore, the direct problem \eqref{mag-tr-eq} with $u|_{\p_+ SM}=u_+\in L^1(\p_+ SM)$, is uniquely solvable with solution $u\in\mathcal {W}$.
\item[(b)] The albedo operator $\mathcal A : L^1(\p_+ SM)\to L^1(\p_- SM)$ is bounded map.
\end{itemize}
\end{Proposition}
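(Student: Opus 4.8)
The plan is to prove part (a) by showing that $K=\mathbf T_0^{-1}T_1$ is a contraction on the weighted space $L^1(SM,\tau^{-1}\,d\Sigma^{2n-1})$, whose norm is $\|f\|=\|\tau^{-1}f\|_{L^1(SM)}$, and then to invoke a Neumann series. The key is to factor the action of $K$ through the two auxiliary estimates of Lemma~\ref{T_0-inverse} and Lemma~\ref{T1} by conjugating with $\tau$. Concretely, given $f$, I would set $g=\tau^{-1}f$ and write $\tau^{-1}Kf=\tau^{-1}\mathbf T_0^{-1}T_1(\tau g)=\tau^{-1}\mathbf T_0^{-1}(T_1\tau g)$. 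Applying Lemma~\ref{T_0-inverse} to the function $T_1\tau g$ gives $\|\tau^{-1}Kf\|_{L^1(SM)}\le\|T_1\tau g\|_{L^1(SM)}$, and then Lemma~\ref{T1} yields $\|T_1\tau g\|_{L^1(SM)}\le\|\tau\sigma_p\|_{L^\infty(SM)}\|g\|_{L^1(SM)}=\|\tau\sigma_p\|_{L^\infty(SM)}\|\tau^{-1}f\|_{L^1(SM)}$. This is exactly the contraction estimate $\|K\|\le\|\tau\sigma_p\|_{L^\infty(SM)}$, which is $<1$ by the subcritical hypothesis \eqref{subcritical1}.

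With $\|K\|<1$, the operator $\id+K$ is invertible on the weighted space via $\sum_{j\ge0}(-K)^j$, with $\|(\id+K)^{-1}\|\le(1-\|\tau\sigma_p\|_{L^\infty(SM)})^{-1}$. Since Proposition~\ref{J is bounded} places $Ju_+$ in this same space, the integral equation \eqref{part}, to which the boundary value problem \eqref{mag-tr-eq} was reduced, has the unique solution $u=(\id+K)^{-1}Ju_+$ there. To upgrade this to $u\in\mathcal W$, I would first note that $\tau$ is continuous, hence bounded, on the compact set $SM$, so $\tau^{-1}u\in L^1(SM)$ forces $u\in L^1(SM)$ via $\|u\|_{L^1(SM)}\le\|\tau\|_{L^\infty(SM)}\|\tau^{-1}u\|_{L^1(SM)}$; and then read off $\mathbf G_\mu u=-au+T_1u$ directly from \eqref{mag-tr-eq}, observing that $au\in L^1(SM)$ because $a\in L^\infty(SM)$ and that $\|T_1u\|_{L^1(SM)}\le\|\sigma_p\|_{L^\infty(SM)}\|u\|_{L^1(SM)}$ by admissibility, so that $\mathbf G_\mu u\in L^1(SM)$. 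This gives $u\in\mathcal W$ and completes (a).

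For part (b), the strategy is to bound $\|u\|_{\mathcal W}$ linearly in $\|u_+\|_{L^1(\p_+SM)}$ and then invoke the trace estimate of Proposition~\ref{th:norm}. From the Neumann bound together with Proposition~\ref{J is bounded}, one gets $\|\tau^{-1}u\|_{L^1(SM)}\le(1-\|\tau\sigma_p\|_{L^\infty(SM)})^{-1}\|u_+\|_{L^1(\p_+SM)}$. Combining $\|u\|_{L^1(SM)}\le\|\tau\|_{L^\infty(SM)}\|\tau^{-1}u\|_{L^1(SM)}$ with the identity $\mathbf G_\mu u=-au+T_1u$ and the two $L^1$ bounds above controls $\|\mathbf G_\mu u\|_{L^1(SM)}$ by a constant multiple of the same quantity, so that $\|u\|_{\mathcal W}\le C\|u_+\|_{L^1(\p_+SM)}$. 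Proposition~\ref{th:norm} then gives $\|\mathcal A u_+\|_{L^1(\p_-SM)}=\|u|_{\p_-SM}\|_{L^1(\p_-SM)}\le\|u\|_{\mathcal W}\le C\|u_+\|_{L^1(\p_+SM)}$, i.e.\ boundedness of $\mathcal A$.

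The only genuinely delicate point is the contraction estimate in the first paragraph; everything downstream is a Neumann series together with bookkeeping that repackages the already-established Lemmas~\ref{T_0-inverse} and \ref{T1} and Propositions~\ref{J is bounded} and \ref{th:norm}. The trick worth highlighting is that neither $\tau^{-1}\mathbf T_0^{-1}$ nor $T_1$ alone is a contraction, but their composition becomes one precisely because conjugating by $\tau$ turns $T_1$ into $T_1\tau$, whose norm carries the subcritical factor $\|\tau\sigma_p\|_{L^\infty(SM)}$; correctly inserting $\tau\tau^{-1}=\id$ between $\mathbf T_0^{-1}$ and $T_1$ is what makes the two lemmas chain together.
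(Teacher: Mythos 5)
Your proposal is correct and follows essentially the same route as the paper: the contraction estimate $\|\tau^{-1}Kf\|_{L^1(SM)}\le\|\tau\sigma_p\|_{L^\infty(SM)}\|\tau^{-1}f\|_{L^1(SM)}$ obtained by chaining Lemma~\ref{T_0-inverse} and Lemma~\ref{T1}, a Neumann series for $(\id+K)^{-1}$, Proposition~\ref{J is bounded} to place $Ju_+$ in the weighted space, the identity $\mathbf G_\mu u=-au+T_1u$ to conclude $u\in\mathcal W$, and Proposition~\ref{th:norm} for part~(b). Your explicit insertion of $\tau\tau^{-1}$ between $\mathbf T_0^{-1}$ and $T_1$ is just a more verbose rendering of the same two-lemma chain the paper uses, and your slightly different constants in the $\mathbf G_\mu u$ bound are immaterial.
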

\begin{proof}
As $K=\mathbf{T}^{-1}_{0}T_{1}$, from Lemma~\ref{T_0-inverse} and Lemma~\ref{T1} we have
\begin{multline*}
\|\tau^{-1}Kf\|_{L^{1}(SM)}=\|\tau^{-1}\mathbf{T}^{-1}_{0}T_{1}f\|_{L^{1}(SM)}\leq \|T_{1}f\|_{L^{1}(SM)}\\
\le \|\tau\sigma_{p}\|_{L^{\infty}(SM)}\|\tau^{-1}f\|_{L^{1}(SM)}.
\end{multline*}
The subcritical condition \eqref{subcritical1} is hence imposed so that $K$ is a bounded operator on $L^{1}(SM,\tau^{-1}d\Sigma^{2n-1})$ with norm strictly less than one, hence $\id+K$ is invertible on this space. Therefore \eqref{part}, and hence \eqref{mag-tr-eq}, with $u|_{\p_+ SM}=u_+$ has the unique solution $u=(\id+K)^{-1}Ju_+$. To finish the proof of (a) it is left to show that $u\in \mathcal W$ whenever $u_+\in L^1(\p_+ SM)$. Using Proposition~\ref{J is bounded}, we have
$$
\|\tau^{-1}u\|_{L^1(SM)}\le C\|u_+\|_{L^1(\p_+ SM)},
$$
with $C=\|(\id+K)^{-1}\|_{L^1(SM,\tau^{-1}\,d\Sigma^{2n-1})\to L^1(SM,\tau^{-1}\,d\Sigma^{2n-1})}$.

Since $\mathbf G_\mu u=-au+T_1u,$ we get
$$
\|\mathbf G_\mu u\|_{L^1(SM)}\le(\|\tau a\|_{L^{\infty}(SM)}+\|\tau\sigma_p\|_{L^{\infty}(SM)})\|\tau^{-1}u\|_{L^1(SM)}<\infty
$$
from the previous estimate. Thus $u\in \mathcal W$ and the proof of (a) is complete.

To prove item~(b), use Proposition~\ref{th:norm} for $\mathcal Au_+=u|_{\p_-SM}$ with previous two inequalities for $\tau^{-1}u$ and $\mathbf G_\mu u$ to get
\begin{multline*}
\|\mathcal Au\|_{L^1(\p_- SM)}\le\|\mathbf G_\mu u\|_{L^1(SM)}+\|\tau^{-1}u\|_{L^1(SM)}\\
\le (1+\|\tau a\|_{L^\infty(SM)}+\|\tau\sigma_p\|_{L^\infty(SM)})\|\tau^{-1}u\|_{L^1(SM)}\\
\le (1+\|\tau a\|_{L^\infty(SM)}+\|\tau\sigma_p\|_{L^\infty(SM)})(1-\|\tau\sigma_p\|_{L^\infty(SM)})^{-1}
\|u_+\|_{L^1(\p_+ SM)}.
\end{multline*}
This completes the proof of item (b).
\end{proof}

Next, we prove that under subcritical condition \eqref{subcritical2} the direct problem is well posed.
\begin{Proposition}\label{well-posed-subcritical2}
Assume that the subcritical condition \eqref{subcritical2} holds.
\begin{itemize}
\item[(a)] The operator $\id+K$ is invertible on $L^1(SM,\tau^{-1}\,d\Sigma^{2n-1})$ with $(\id+K)^{-1}=\id-\mathbf T^{-1}T_1$. Therefore, the forward problem \eqref{mag-tr-eq}, with $u|_{\p_+ SM}=u_+\in L^1(\p_+ SM)$, is uniquely solvable with solution $u\in{\mathcal W}$.
\item[(b)] The albedo operator $\mathcal A : L^1(\p_+ SM)\to L^1(\p_- SM)$ is bounded map.
\end{itemize}
\end{Proposition}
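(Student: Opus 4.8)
The plan is to treat part~(a) by inverting the unbounded operator $\mathbf T=\mathbf T_0+T_1$ and then reading off the stated formula for $(\id+K)^{-1}$ from the algebraic identity $\id+K=\mathbf T_0^{-1}(\mathbf T_0+T_1)=\mathbf T_0^{-1}\mathbf T$; part~(b) will then follow from Proposition~\ref{th:norm} exactly as in Proposition~\ref{well-posed-subcritical1}(b). For the formula itself, once $\mathbf T^{-1}$ is available I would simply write $T_1=\mathbf T-\mathbf T_0$, so that $\id-\mathbf T^{-1}T_1=\id-\mathbf T^{-1}(\mathbf T-\mathbf T_0)=\mathbf T^{-1}\mathbf T_0=(\mathbf T_0^{-1}\mathbf T)^{-1}=(\id+K)^{-1}$, which is the claim. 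Thus the whole of~(a) reduces to showing that $\mathbf T\colon D(\mathbf T)\to L^1(SM)$ is a bijection with $\tau^{-1}\mathbf T^{-1}$ bounded on $L^1(SM)$. Granting this, $(\id+K)^{-1}=\id-\mathbf T^{-1}T_1$ is bounded on $L^1(SM,\tau^{-1}\,d\Sigma^{2n-1})$, so $u=(\id+K)^{-1}Ju_+$ lies in that space by Proposition~\ref{J is bounded}; and then $\mathbf G_\mu u=-au+T_1u\in L^1(SM)$ because $a\in L^\infty$, $T_1$ is bounded on $L^1(SM)$, and $u\in L^1(SM)$ as $\tau$ is bounded. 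Hence $u\in\mathcal W$.

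The heart of the matter is the a priori estimate that makes $\mathbf T$ accretive. Given $u\in D(\mathbf T)$ with $\mathbf T u=f$, i.e. $\mathbf G_\mu u=-au+T_1u-f$, I would multiply by $\sgn(u)$ and use Kato's inequality $\mathbf G_\mu|u|\le \sgn(u)\,\mathbf G_\mu u$ together with $\sgn(u)\,T_1u\le T_1|u|$ (valid since $k\ge0$). Integrating over $SM$ against $d\Sigma^{2n-1}$ and applying the fundamental theorem of calculus along the magnetic flow (Proposition~\ref{Santalo}) turns $\int_{SM}\mathbf G_\mu|u|\,d\Sigma^{2n-1}$ into the boundary term $\int_{\p_-SM}|u|\,d\mu\ge0$, since $u|_{\p_+SM}=0$, while Fubini turns $\int_{SM}T_1|u|\,d\Sigma^{2n-1}$ into $\int_{SM}\sigma_p|u|\,d\Sigma^{2n-1}$. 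Invoking the subcritical condition \eqref{subcritical2}, $a-\sigma_p\ge0$, this yields
\begin{equation*}
\int_{\p_-SM}|u|\,d\mu+\int_{SM}(a-\sigma_p)|u|\,d\Sigma^{2n-1}\le\|f\|_{L^1(SM)},
\end{equation*}
the accretivity estimate for $-\mathbf T$, whose boundary term records that particles leave $M$ in finite time.

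To obtain invertibility I would exploit positivity. Since $k\ge0$, the gain operator $T_1$ preserves the positive cone, while $\mathbf T_0^{-1}f=-\int_{\tau_-}^0E(\cdot,t,0)f(\phi_t)\,dt$ reverses sign; hence $K=\mathbf T_0^{-1}T_1$ sends nonnegative functions to nonpositive ones, i.e. $-K$ is a positive operator on $L^1(SM,\tau^{-1}\,d\Sigma^{2n-1})$. Writing $(\id+K)^{-1}$ as the Neumann series $\sum_{n\ge0}(-K)^n$, all of whose terms are positive operators, the partial sums are monotone on nonnegative data; the a priori estimate above, together with the finite transit-time bound $\|\tau^{-1}\mathbf T_0^{-1}\|\le1$ from Lemma~\ref{T_0-inverse}, supplies the uniform $L^1$ control needed for monotone convergence. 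Splitting general data into positive and negative parts then yields a bounded $(\id+K)^{-1}$ and, equivalently, a bounded $\tau^{-1}\mathbf T^{-1}$ on $L^1(SM)$, completing (a) via the identity of the first paragraph.

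The main obstacle I anticipate is the borderline case in which $a=\sigma_p$ on a set of positive measure: there the left-hand side of the a priori estimate degenerates, so accretivity alone yields neither injectivity nor a quantitative bound. I expect to circumvent this by regularizing, replacing $a$ with $a+\epsilon$, which satisfies the strict condition $a+\epsilon-\sigma_p\ge\epsilon>0$ and to which the strict form of the above estimate applies, deriving bounds uniform in $\epsilon$ from the positivity and finite-transit-time structure (rather than from $\epsilon^{-1}$), and then letting $\epsilon\downarrow0$. Once (a) is established, part~(b) is routine: for $u_+\in L^1(\p_+SM)$ the solution satisfies $\mathcal A u_+=u|_{\p_-SM}$, and Proposition~\ref{th:norm} gives $\|\mathcal Au_+\|_{L^1(\p_-SM)}\le\|u\|_{\mathcal W}=\|\mathbf G_\mu u\|_{L^1(SM)}+\|\tau^{-1}u\|_{L^1(SM)}$, which is bounded by a constant times $\|u_+\|_{L^1(\p_+SM)}$ using $\mathbf G_\mu u=-au+T_1u$ and the bound on $\tau^{-1}u$, exactly as in Proposition~\ref{well-posed-subcritical1}(b).
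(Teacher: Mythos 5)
Your algebraic skeleton matches the paper's: both proofs reduce everything to inverting $\mathbf T$ on $L^1(SM)$ and then read off $(\id+K)^{-1}=\id-\mathbf T^{-1}T_1$ from $\mathbf T=(\id+T_1\mathbf T_0^{-1})\mathbf T_0$, with part (b) handled exactly as in Proposition~\ref{well-posed-subcritical1}. The problem is that the one genuinely hard step --- invertibility of $\mathbf T$ under the borderline condition $a-\sigma_p\ge 0$ --- is not actually carried out in your proposal. Your accretivity estimate
$\int_{\p_-SM}|u|\,d\mu+\int_{SM}(a-\sigma_p)|u|\,d\Sigma^{2n-1}\le\|f\|_{L^1(SM)}$
is correct but, as you yourself observe, degenerates precisely when $a=\sigma_p$ on a set of positive measure: it then controls neither $\|u\|_{L^1(SM)}$ nor even injectivity. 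The two repairs you sketch do not close this. The monotone-series argument needs a uniform bound on the partial sums $\sum_{n\le N}(-K)^nf$, and you assert that the accretivity estimate plus $\|\tau^{-1}\mathbf T_0^{-1}\|\le 1$ "supplies the uniform $L^1$ control," but neither of those facts bounds the series; positivity alone gives monotonicity, not convergence. Likewise the regularization $a\mapsto a+\epsilon$ is only useful if you can produce bounds uniform in $\epsilon$, which is exactly the missing estimate restated.

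What the paper does instead is prove the quantitative contraction bound directly: under \eqref{subcritical2} one has $E(x,\eta,t,0)=\exp\bigl(-\int_t^0a(\phi_p)\,dp\bigr)\le\exp\bigl(\int_0^t\sigma_p(\phi_u)\,du\bigr)$ for $t\le 0$, and then, after Santal\'o's formula and an exchange of the $s,t$ integrations using the identity
$\sigma_p(\phi_s)\,e^{\int_s^t\sigma_p(\phi_u)\,du}=-\frac{d}{ds}\bigl(e^{\int_s^t\sigma_p(\phi_u)\,du}\bigr)$,
the inner integral evaluates exactly to $1-e^{-\int_t^{\tau_+}\sigma_p(\phi_u)\,du}$, giving
$\|T_1\mathbf T_0^{-1}\|_{L^1(SM)\to L^1(SM)}\le 1-e^{-2\|\sigma_p\|_{L^\infty}\|\tau\|_{L^\infty}}<1$.
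This strict contraction is what makes $\id+T_1\mathbf T_0^{-1}$ (hence $\mathbf T$, hence $\id+K$) invertible with a bounded inverse, and it is the ingredient your argument would also need to make either the Neumann series or the regularization converge. Without some version of this estimate, your proof has a genuine gap at its central step.
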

\begin{proof}
To show the invertibility of $\id+K$ on $L^1(TM,\tau^{-1}dV^{2n})$, we proceed in a different way. Consider the operator $T_{1}\mathbf{T}^{-1}_{0}:L^{1}(SM)\rightarrow L^{1}(SM)$, this is a bounded operator by Lemma \ref{T_0-inverse} and Lemma \ref{T1}. First, we will show that, assuming the subcritical condition~\eqref{subcritical2}, this operator has norm strictly less than one. For this purpose, we compute
\begin{multline*}
\|T_{1}\mathbf{T}^{-1}_{0}f\|_{L^{1}(SM)}\le \int_{M}\int_{S_{x}M}\int_{S_{x}M}\int^{0}_{\tau_{-}(x,\eta)}k(x,\eta,\xi)E(x,\eta,t,0)\\
\times|f(\phi_{t}(x,\eta))|\,dt\,d\sigma_x(\eta)\,d\sigma_x(\xi)\,d\vol_g(x)\\
=\int_{M}\int_{S_{x}M}\int^{0}_{\tau_{-}(x,\eta)}\sigma_{p}(x,\eta)E(x,\eta,t,0)|f(\phi_{t}(x,\eta))|\,dt\,d\sigma_x(\eta)\,d\vol_g(x)\\
\le\int_{M}\int_{S_{x}M}\int^{0}_{\tau_{-}(x,\eta)}\sigma_{p}(x,\eta)
e^{\int^{t}_{0}\sigma_{p}(\phi_{u}(x,\eta))\,du}|f(\phi_{t}(x,\eta))|\,dt\,d\sigma_x(\eta)\,d\vol_g(x).
\end{multline*}
Make a change of variable $(x,\eta)=\phi_{s}(x',\eta')$ on the right-hand side to get
\begin{multline*}
\|T_{1}\mathbf{T}^{-1}_{0}f\|_{L^{1}(SM)}\le \int_{\partial_{+}SM}\int^{\tau_{+}(x',\eta')}_{0}\int^{0}_{-s}\sigma_{p}(\phi_{s}(x',\eta'))
e^{\int^{t}_{0}\sigma_{p}(\phi_{u+s}(x',\eta'))\,du}\\
\times|f(\phi_{t+s}(x',\eta'))|\,dt\,ds\,d\mu(x',\eta')\\
=\int_{\partial_{+}SM}\int^{\tau_{+}(x',\eta')}_{0}\int^{s}_{0}\sigma_{p}(\phi_{s}(x',\eta'))
e^{\int^{t}_{s}\sigma_{p}(\phi_{u}(x',\eta'))\,du}\\
\times|f(\phi_{t}(x',\eta'))|\,dt\,ds\,d\mu(x',\eta').
\end{multline*}
Observe that
$$
\sigma_{p}(\phi_{s}(x',\eta'))e^{\int^{t}_{s}\sigma_{p}(\phi_{u}(x',\eta'))\,du}=-\displaystyle\frac{d}{ds}
\left(e^{\int^{t}_{s}\sigma_{p}(\phi_{u}(x',\eta'))\,du}\right).
$$
Switching the order of integration with respect to $s$ and $t$ we obtain
\begin{multline*}
\|T_{1}\mathbf{T}^{-1}_{0}f\|_{L^{1}(SM)}\\
\le\int_{\partial_{+}SM}\int^{\tau_{+}(x',\eta')}_{0}\bigg(\int^{\tau_{+}(x',\eta')}_{t}\sigma_{p}(\phi_{s}(x',\eta'))e^{\int^{t}_{s}\sigma_{p}(\phi_{u}(x',\eta'))\,du}\,ds\bigg)\\
\times|f(\phi_{t}(x',\eta'))|\,dt\,d\mu(x',\eta')\\
=\int_{\partial_{+}SM}\int^{\tau_{+}(x',\eta')}_{0}
\left(1-e^{-\int^{\tau_{+}(x',\eta')}_{t}\sigma_{p}(\phi_{u}(x',\eta'))\,du}\right)\\
\times|f(\phi_{t}(x',\eta'))|\,dt\,d\mu(x',\eta')\\
\le \left(1-e^{-2\|\sigma_{p}\|_{L^{\infty}(SM)}\|\tau\|_{L^{\infty}(SM)}}\right)\|f\|_{L^{1}(SM)}<\|f\|_{L^{1}(SM)}.
\end{multline*}
Therefore, $\id\pm T_1\mathbf T^{-1}_0$ are invertible on $L^1(SM)$. Now, we derive the invertibility of $\id+K$ on $L^{1}(SM,\tau^{-1}d\Sigma^{2n-1})$. Firstly, $\mathbf{T}=(\id+T_{1}\mathbf{T}^{-1}_{0})\mathbf{T}_{0}$ on $D(\mathbf{T})$. Since $\|T_1\mathbf T_0^{-1}\|<1$, the operator $\mathbf T$ admits a bounded inverse on $L^{1}(SM)$, this is given by $\mathbf{T}^{-1}=\mathbf{T}^{-1}_{0}(\id+T_{1}\mathbf{T}^{-1}_{0})^{-1}$. We have
\begin{multline*}
(\id+K)(\id-\mathbf{T}^{-1}T_{1})\\
=\id+\mathbf{T}^{-1}_{0}T_{1}-\mathbf{T}^{-1}_{0}(\id+T_{1}\mathbf{T}^{-1}_{0})^{-1}T_{1}
-\mathbf{T}^{-1}_{0}(T_{1}\mathbf{T}^{-1}_{0}(\id+T_{1}\mathbf{T}^{-1}_{0})^{-1})T_{1}.\\
= \id+\mathbf{T}^{-1}_{0}(\id-(\id+T_{1}\mathbf{T}^{-1}_{0})^{-1}-T_{1}\mathbf{T}^{-1}_{0}
(\id+T_{1}\mathbf{T}^{-1}_{0})^{-1})T_{1}=\id.
\end{multline*}
Similarly $(\id-\mathbf{T}^{-1}T_{1})(\id+K)=\id$. Therefore, $\id+K$ is invertible on $L^{1}(SM)$. Secondly, since $L^{1}(SM,\tau^{-1}d\Sigma^{2n-1})\subset L^{1}(SM)$, the operator $K$ is bounded on $L^{1}(SM,\tau^{-1}d\Sigma^{2n-1})$. If we can show $\mathbf{T}^{-1}T_{1}$ is bounded on $L^{1}(SM,\tau^{-1}d\Sigma^{2n-1})$, then $\id-\mathbf{T}^{-1}T_{1}$ will also be the inverse of $\id+K$ on subspace $L^{1}(SM,\tau^{-1}d\Sigma^{2n-1})$. That $\mathbf{T}^{-1}T_{1}$ is bounded on $L^{1}(SM,\tau^{-1}d\Sigma^{2n-1})$ follows from
\begin{multline*}
\|\tau^{-1}\mathbf{T}^{-1}T_{1}f|_{L^{1}(SM)}= \|\tau^{-1}\mathbf{T}^{-1}_{0}(\id+T_{1}\mathbf{T}^{-1}_{0})^{-1}T_{1}f\|_{L^{1}(SM)} \\
\le \|(\id+T_{1}\mathbf{T}^{-1}_{0})^{-1}T_{1}f\|_{L^{1}(SM)}\le \|(\id+T_{1}\mathbf{T}^{-1}_{0})^{-1}\|\|T_{1}f\|_{L^{1}(SM)}\\
\le \|(\id+T_{1}\mathbf{T}^{-1}_{0})^{-1}\|\|\tau\sigma_{p}\|_{L^{\infty}(SM)}\|\tau^{-1}f\|_{L^{1}(SM)},
\end{multline*}
where we have used Lemma \ref{T_0-inverse}, Lemma \ref{T1} and that the operator $\id+T_1\mathbf T_0^{-1}:L^1(SM)\to L^1(SM)$ has a bounded inverse. Thus, $\id-\mathbf{T}^{-1}T_{1}$ is the bounded inverse of $\id+K$ on $L^{1}(SM,\tau^{-1}d\Sigma^{2n-1})$. Therefore \eqref{part}, and hence \eqref{mag-tr-eq}, with $u|_{\p_+ SM}=u_+$ has the unique solution $u=(\id+K)^{-1}Ju_+$. To finish the proof of (a) it is left to show that $u\in \mathcal W$ whenever $u_+\in L^1(\p_+ SM)$. The proof of this and the proof of item (b) are exactly as in Proposition~\ref{well-posed-subcritical1}.
\end{proof}

\section{Distribution kernel of albedo operator}
This section is devoted to the derivation of distribution kernel of $\mathcal A$. To do this we need to solve \eqref{mag-tr-eq} with a singular boundary condition:
\begin{equation}\label{tr-bnd-sing}
\begin{aligned}
-\mathbf G_{\mu}u(x,\xi)-a(x,\xi)&u(x,\xi)+\int_{S_x M}k(x,\eta,\xi)u(x,\eta)\,d\eta_x=0,\\
&u|_{\p_+ SM}=\delta_{\{x',\xi'\}}(x,\xi),
\end{aligned}
\end{equation}
where $(x',\xi')\in\p_+ SM$ is regarded as a parameter and $\delta_{\{x',\xi'\}}$ is a distribution on $\p_+ SM$ defined by
$$
(\delta_{\{x',\xi'\}},\varphi)=\int_{\p_+ SM}\delta_{\{x',\xi'\}}(x,\xi)\varphi(x,\xi)\,d\mu(x,\xi)=\varphi(x',\xi') \quad\quad \varphi\in C^{\infty}_{c}(\partial_{+}SM).
$$

Since we assume that the magnetic system is simple, the following map is well-defined: for $(x,\xi)\in SM$ and for $y\in M$ we denote by $\mathcal P_x^y(\xi)$ the parallel translation of $\xi$ along the unique magnetic geodesic from $x$ to $y$.
\begin{Theorem}\label{u}
Assume that $(a,k)$ is admissible and the direct problem is well posed. Then the solution $u$ of \eqref{tr-bnd-sing} is of the form $u=u_1+u_2+u_3$, where
\begin{align}
u_1(x,\xi,x',\xi')&=\int_0^{\tau_+(x',\xi')}E(x,\xi,\tau_-(x,\xi),0)\delta_{(x,\xi)}(\phi_t(x',\xi'))\,dt,\label{u-1}\\
u_2(x,\xi,x',\xi')&=\int_0^{\tau_+(x',\xi')}\int^0_{\tau_-(x,\xi)}E(x,\xi,s,0) E(x',\xi',0,r)\cdot\label{u-2}\\
&\qquad\cdot k\left(\phi_r(x',\xi'),\mathcal P_{\gamma_{x,\xi}(s)}^{\gamma_{x',\xi'}(r)}(\dot\gamma_{x,\xi}(s))\right)\delta_{\gamma_{x,\xi}(s)}(\gamma_{x',\xi'}(r))\,ds\,dr,\notag\\
u_3(x,\xi,x',\xi')&\in L^{\infty}(\p_+ SM;\mathcal W),\label{u-3}
\end{align}
and where $(x,\xi)\in SM$ and $(x',\xi')\in \p_+SM$.
\end{Theorem}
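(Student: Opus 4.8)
The plan is to reduce \eqref{tr-bnd-sing} to the integral equation $(\id+K)u=Ju_+$ exactly as in Section~\ref{3.1}, now in the distributional sense with $u_+=\delta_{\{x',\xi'\}}$, and then to peel off the two leading singular contributions by a resolvent expansion. Since $\id+K$ is boundedly invertible by Proposition~\ref{well-posed-subcritical1} (or Proposition~\ref{well-posed-subcritical2}), I would use the identity
\[
(\id+K)^{-1}=\id-K+K^2(\id+K)^{-1}
\]
to write $u=u_1+u_2+u_3$ with $u_1=Ju_+$, $u_2=-KJu_+=-\mathbf T_0^{-1}T_1Ju_+$, and $u_3=K^2(\id+K)^{-1}Ju_+$; equivalently, $u_3$ is characterized as the solution of $(\id+K)u_3=K^2Ju_+$. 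The task then splits into evaluating $u_1,u_2$ explicitly for the delta datum and proving that the remainder $u_3$ is regular, i.e. lies in $\mathcal W$ with a bound uniform in the parameter $(x',\xi')\in\p_+SM$.

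For the explicit terms I would substitute $u_+=\delta_{\{x',\xi'\}}$ into $Ju_+(x,\xi)=E(x,\xi,\tau_-(x,\xi),0)u_+(\phi_{\tau_-(x,\xi)}(x,\xi))$ and test against $\varphi\in C_c^\infty(SM)$; Santaló's formula (Proposition~\ref{Santalo}) converts the boundary delta into an integral along the source magnetic geodesic, and the identity $\phi_{\tau_-(\phi_t(x',\xi'))}(\phi_t(x',\xi'))=(x',\xi')$ together with the cocycle property of $E$ produces \eqref{u-1}. For $u_2$ I would first compute $T_1u_1$: because $u_1$ is carried by the lifted source geodesic $t\mapsto\phi_t(x',\xi')$, factoring the ambient delta (with respect to $d\Sigma^{2n-1}=d\sigma_x\,d\vol_g$) into a base-point delta on $M$ times a fibre delta on $S_xM$ collapses the fibre integration in $T_1$ and selects the single incoming direction $\dot\gamma_{x',\xi'}(r)$. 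Parallel-transporting the outgoing direction $\dot\gamma_{x,\xi}(s)$ to the scattering point by $\mathcal P_{\gamma_{x,\xi}(s)}^{\gamma_{x',\xi'}(r)}$ renders the argument of $k$ covariantly well defined (it reduces to $\dot\gamma_{x,\xi}(s)$ on the support of $\delta_{\gamma_{x,\xi}(s)}(\gamma_{x',\xi'}(r))$), and applying $\mathbf T_0^{-1}$ along the observation geodesic yields \eqref{u-2}.

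The genuinely hard part is the regularity of $u_3$, which exhibits the classical dichotomy between singular single scattering and regular multiple scattering. The heuristic is a dimension count: $u_1$ is carried by a curve in the $(2n-1)$-dimensional $SM$, and each application of $K=\mathbf T_0^{-1}T_1$ integrates over the $(n-1)$-dimensional fibre and the one-dimensional flow direction, so after two applications the accumulated integration resolves the singular support into an absolutely continuous density. To make this rigorous I would estimate the kernel of $K^2J$ directly: writing $K^2Ju_+=-Ku_2$ as an iterated integral over the new flow parameter and intermediate scattering direction, inserting the delta constraints carried by $u_2$, and applying Santaló's formula together with a change of variables along the magnetic flow, I would convert the result into an integral over $SM$ against an $L^1$ density. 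Admissibility of $(a,k)$ and finiteness of $\tau$ then give $K^2Ju_+\in L^1(SM,\tau^{-1}d\Sigma^{2n-1})$, and a parallel computation gives $T_1u_2\in L^1(SM)$, both with bounds uniform in $(x',\xi')$.

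Finally, to upgrade these $L^1$ bounds to $u_3\in\mathcal W$ I would use the transport equation itself. Subtracting from $\mathbf G_\mu u=-au+T_1u$ the equations $\mathbf G_\mu u_1=-au_1$ and $\mathbf G_\mu u_2=-au_2+T_1u_1$ satisfied by the first two terms yields $\mathbf G_\mu u_3=-au_3+T_1u_3+T_1u_2$. Since $(\id+K)^{-1}$ is bounded on $L^1(SM,\tau^{-1}d\Sigma^{2n-1})$ we have $\tau^{-1}u_3\in L^1(SM)$, whence $au_3,T_1u_3\in L^1(SM)$ (using $\|\tau a\|_{L^\infty(SM)}<\infty$ and Lemma~\ref{T1}), and $T_1u_2\in L^1(SM)$ was just established; hence $\mathbf G_\mu u_3\in L^1(SM)$ and $u_3\in\mathcal W$. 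Uniformity of all these bounds in $(x',\xi')$ gives $u_3\in L^\infty(\p_+SM;\mathcal W)$. The main obstacle throughout is the change-of-variables estimate establishing integrability of the double-scattering kernel uniformly in the boundary parameter, and it is precisely here that the geometry of the (non-reversible) magnetic flow enters, through the Jacobians of the flow.
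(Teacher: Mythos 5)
Your proposal is correct and follows essentially the same route as the paper: the same three-term resolvent expansion $u=Ju_+-KJu_++(\id+K)^{-1}K^2Ju_+$, the same Santal\'o/parallel-transport computation of $u_1,u_2$, and the same strategy for $u_3$ (uniform $L^1$ control of the double-scattering source $T_1KJu_+$, then boundedness of $\tau^{-1}\mathbf T^{-1}$ and the equation itself to recover $\mathbf G_\mu u_3\in L^1$). The only small remarks are that the invertibility of $\id+K$ should be drawn from the well-posedness hypothesis itself (Proposition~\ref{prop}) rather than the subcritical conditions, and that the paper gets the key uniform bound \eqref{T_1K-bound} by composing the operator estimates of Lemmas~\ref{T_0-inverse} and~\ref{T1} with Proposition~\ref{J is bounded}, so no delicate Jacobian estimate is actually needed there.
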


\begin{proof}
To solve problem \eqref{tr-bnd-sing} take $\varphi_+\in C^{\infty}_{c}(\p_+ SM)$ and denote by $\varphi$ the solution to
\begin{equation}\label{tr-bnd-phi}
\begin{aligned}
-\mathbf G_{\mu}\varphi(x,\xi)-a(x,\xi)&\varphi(x,\xi)+\int_{S_x M}k(x,\eta,\xi)\varphi(x,\eta)\,d\eta_x=0,\\
&\varphi|_{\p_+ SM}=\varphi_+.
\end{aligned}
\end{equation}
As we explained in Section~\ref{3.1}, the equation \eqref{tr-bnd-phi} with $\varphi|_{\p_+ SM}=\varphi_+$ has the unique solution $\varphi=(\id+K)^{-1}J\varphi_+$, which can be rewritten as follows
$$
\varphi=J\varphi_+-K J\varphi_++(\id+K)^{-1}K^2 J\varphi_+.
$$
Since $\mathbf T^{-1}=(\id+K)^{-1}\mathbf T_0^{-1}$ and $K=\mathbf T_0^{-1} T_1$ we get
\begin{equation}\label{express}
\varphi=J\varphi_+-K J\varphi_++\mathbf T^{-1}T_1 K J\varphi_+.
\end{equation}
Now we analyze three terms in \eqref{express} for $\varphi$ and so we determine the distribution kernel of the solution operator $\varphi_+\mapsto \varphi$ of \eqref{tr-bnd-phi}. This solves \eqref{tr-bnd-sing} in the sense of distributions.

Let us begin with the first term $J\varphi_+$ on the right-hand side of \eqref{express}. If $\psi\in C^{\infty}_{c}(SM)$, then
\begin{multline*}
(J\varphi_+,\psi)\\
=\int_{\p_+ SM}\int_0^{\tau_+(x',\xi')}J\varphi_+(\phi_t(x',\xi'))\psi(\phi_t(x',\xi'))\,dt\,d\mu(x',\xi')\\
=\int_{\p_+ SM}\varphi_+(x',\xi')\int_0^{\tau_+(x',\xi')}E(\phi_t(x',\xi'),\tau_-(\phi_t(x',\xi')),0)\psi(\phi_t(x',\xi'))\,dt\,d\mu(x',\xi')\\
=\int_M\int_{S_x M}\int_{\p_+ SM}\varphi_+(x',\xi')\int_0^{\tau_+(x',\xi')}E(x,\xi,\tau_-(x,\xi),0)\\
\times\delta_{(x,\xi)}(\phi_t(x',\xi'))\,dt\,d\mu(x',\xi')\psi(x,\xi)\,d\sigma_x(\xi)\,d\vol_g(x),
\end{multline*}
where $\delta_{(\bar x,\bar\xi)}$ is the distribution on $SM$ defined by
$$
(\delta_{(\bar x,\bar\xi)},\varphi)=\int_M\int_{S_x M}\delta_{(\bar x,\bar\xi)}\varphi(x,\xi)\,d\sigma_x(\xi)\,d\vol_g(x)=\varphi(\bar x,\bar\xi) \quad\quad \varphi\in C^{\infty}_{c}(SM).
$$
Thus,
\begin{equation}\label{1st}
J\varphi_+(x,\xi)=\int_{\p_+ SM}u_1(x,\xi,x',\xi')\varphi(x',\xi')\,d\mu(x',\xi'),
\end{equation}
with $u_1$ as given in \eqref{u-1} -- the first statement of the theorem.

Consider further the second term $-KJ\varphi_+$ on the right-hand side of \eqref{express}. By straightforward calculations we get
\begin{multline*}
 -(KJ\varphi_+,\psi)=-\int_{\p_+ SM}\int_0^{\tau_+(x',\xi')}KJ\varphi_+(\phi_t(x',\xi'))\psi(\phi_t(x',\xi'))\,dt\,d\mu(x',\xi')\\
=\int_{\p_+ SM}\int_0^{\tau_+(x',\xi')}\int^0_{\tau_-(\phi_t(x',\xi'))}E(\phi_t(x',\xi'),s,0)(T_1 J\varphi_+)(\phi_{t+s}(x',\xi'))\\
\times\psi(\phi_t(x',\xi'))\,ds\,dt\,d\mu(x',\xi')\\
=\int_{M}\int_{S_x M}\int^0_{\tau_-(x,\xi)}\int_{S_{\gamma_{x,\xi}(s)}M}E(x,\xi,s,0)E(\gamma_{x,\xi}(s),\eta,\tau_-(\gamma_{x,\xi}(s),\eta),0)\\
\times k(\gamma_{x,\xi}(s),\eta,\dot\gamma_{x,\xi}(s))\varphi_+(\bar x,\bar \xi)\psi(x,\xi)\,d\sigma_{{\gamma_{x,\xi}(s)}}(\eta)\,ds\,d\sigma_x(\xi)\,d\vol_g(x),
\end{multline*}
where $(x,\xi)=\phi_t(x',\xi')$ and $(\bar x,\bar\xi)=\phi_{\tau_-(\gamma_{x,\xi}(s),\eta)}({\gamma_{x,\xi}(s)},\eta)$.

Using parallel translation along magnetic geodesics we obtain
\begin{multline*}
-(KJ\varphi_+,\psi)=\displaystyle\int_{M}\int_{S_x M}\int^0_{\tau_-(x,\xi)}\int_M \delta_{\gamma_{x,\xi}(s)}(y)\int_{S_y M}E(x,\xi,s,0)E(y,\eta,\tau_-(y,\eta),0)\\
\times k(y,\eta,\mathcal P_{\gamma_{x,\xi}(s)}^{y}(\dot\gamma_{x,\xi}(s)))\varphi_+(\phi_{\tau_-(y,\eta)}(y,\eta))\,\psi(x,\xi)\\
\times\,d\sigma_y(\eta)\,d\vol_g(y)\,ds\,d\sigma_x(\xi)\,d\vol_g(x),
\end{multline*}
where $\delta_x$ is a delta distribution on $M$ defined by
$$
(\delta_x,\varphi)_M=\int_M\delta_x(y)\varphi(y)\,d\vol_g(y)=\varphi(x) \quad\quad \varphi\in C^{\infty}_{c}(M).
$$
Make change of variables $(y',\eta',r)=(\phi_{\tau_-(y,\eta)}(y,\eta),-\tau_-(y,\eta))$. Interchanging the order of integration $\,dr\,d\mu(y',\eta')\,ds$ to $\,ds\,dr\,d\mu(y',\eta')$, we have
\begin{multline*}
-(KJ\varphi_+,\psi)=\displaystyle\int_{M}\int_{S_x M}\psi(x,\xi)\int_{\p_+ SM}\varphi_+(y',\eta')\int_0^{\tau_+(y',\eta')}\int^0_{\tau_-(x,\xi)}E(x,\xi,s,0) \\
\times E(y',\eta',0,r) k(\phi_r(y',\eta'),\mathcal P_{\gamma_{x,\xi}(s)}^{\gamma_{y',\eta'}(r)}(\dot\gamma_{x,\xi}(s))) \\
\times\delta_{\gamma_{x,\xi}(s)}(\gamma_{y',\eta'}(r))\,ds\,dr\,d\mu(y',\eta')\,d\sigma_x(\xi)\,d\vol_g(x)
\end{multline*}
and so obtain \eqref{u-2}.

Finally we analyse the third term in \eqref{express}. We start with $T_1 K J\varphi_+(x,\xi)$:
\begin{multline}\label{final-1}
T_1 K J\varphi_+(x,\xi)=\displaystyle\int_{S_x M}\int_{\tau_-(x,\eta)}^0 \int_{S_{\gamma_{x,\eta}(t)} M}k(x,\eta,\xi) k(\gamma_{x,\eta}(t),\zeta,\dot\gamma_{x,\eta}(t))E(x,\eta,t,0)\\
\times E(\gamma_{x,\eta}(t),\zeta,\tau_-(\gamma_{x,\eta}(t),\zeta),0)\varphi_+(\phi_{\tau_-(\gamma_{x,\eta}(t),\zeta)}(\gamma_{x,\eta}(t),\zeta))\\
\times\,d\sigma_{\gamma_{x,\eta}(t)}(\zeta)\,dt\,d\eta_x.
\end{multline}
We claim that the integral above is absolutely convergent for a.e. $(x,\xi)\in SM$. Indeed, as $K=\mathbf{T}^{-1}_{0}T_{1}$, from Lemma~\ref{T_0-inverse} and Lemma~\ref{T1} we have
\begin{multline*}
\|\tau^{-1}Kf\|_{L^{1}(SM)}=\|\tau^{-1}\mathbf{T}^{-1}_{0}T_{1}f\|_{L^{1}(SM)}\leq \|T_{1}f\|_{L^{1}(SM)}\\
\le \|\tau\sigma_{p}\|_{L^{\infty}(SM)}\|\tau^{-1}f\|_{L^{1}(SM)},
\end{multline*}
for $f\in L^1(SM)$. Using this and Lemma~\ref{T1} with Lemma~\ref{J is bounded}, we have
\begin{equation} \label{T_1K-bound}
\begin{aligned}
\|T_1 K J\varphi_+&\|_{L^1(SM)}\\
\le& \|T_1\tau\|_{L^1(SM)\to L^1(SM)} \|\tau^{-1}K\tau\|_{L^{1}(SM)\to L^1(SM)} \|\tau^{-1} J\varphi_+\|_{L^1(SM)}\\
\le& \|\tau\sigma_{p}\|^{2}_{L^{\infty}(SM)}\|\varphi_+\|_{L^1(\p_+ SM)}.
\end{aligned}
\end{equation}
Thus by Fubini's theorem, the integral $T_1 K J\varphi_+$ is absolutely convergent for a.e.  $(x,\xi)\in SM$, as claimed.

Make the following change of variables $y=\exp^{\mu}_x(t\eta)=\gamma_{x,\eta}(t)$ where $\exp^{\mu}_{x}$ is the magnetic exponential map. Let $J$ be the Jacobian of this change of variables. Introduce the following notations $\tilde\eta_x(y)=\dot\gamma_{x,\eta}(0)\in S_x M$, $\tilde\eta(y)=\dot\gamma_{x,\eta}(t)\in S_y M$. Then time $t$ can be expressed in terms of $x$ and $y$ as follows
$$
t(x,y)=-\frac{|(\exp^{\mu}_x)^{-1}y|}{|\tilde\eta_x(y)|}.
$$
Using all of this we obtain
\begin{multline*}
T_1 K J\varphi_+(x,\xi)=\int_M\int_{S_y M}k(x,\tilde\eta_x(y),\xi) k(y,\zeta,\tilde\eta_y(y))E(x,\tilde\eta_x(y),t(x,y),0)\\
\times E(y,\zeta,\tau_-(y,\zeta),0)\varphi_+(\phi_{\tau_-(y,\zeta)}(y,\zeta))|J|\,d\sigma_y(\zeta)\,d\vol_g(y).
\end{multline*}
Now, using Proposition \ref{Santalo} we change variables again: setting $(y,\zeta)=\phi_s(y',\zeta')$ we get
\begin{multline*}
T_1 K J\varphi_+(x,\xi)=\displaystyle\int_{\p_+ SM}\int_0^{\tau_+(x',\xi')}k(x,\tilde\eta_x(\gamma_{x',\xi'}(s)),\xi) k(\phi_s(x',\xi'),\tilde\eta(\gamma_{x',\xi'}(s)))\\
\times E(x,\tilde\eta_x(\gamma_{x',\xi'}(s)),t(x,\gamma_{x',\xi'}(s)),0) E(\phi_s(x',\xi'),\tau_-(\phi_s(x',\xi')),0)\\
\times \varphi_+(x',\xi'))|J|\,ds\,d\mu(x',\xi').
\end{multline*}
Rewriting this expression we obtain
$$
T_1 K J\varphi_+(x,\xi)=\int_{\p_+ SM}\tilde u_3(x,\xi,x',\xi')\varphi_+(x',\xi')\,d\mu(x',\xi'),
$$
where
\begin{multline*}
\tilde u_3(x,\xi,x',\xi')=\int_0^{\tau_+(x',\xi')}k(x,\tilde\eta_x(\gamma_{x',\xi'}(s)),\xi) k(\phi_s(x',\xi'),\tilde\eta(\gamma_{x',\xi'}(s)))\\
\times E(x,\tilde\eta_x(\gamma_{x',\xi'}(s)),t(x,\gamma_{x',\xi'}(s)),0) E(\phi_s(x',\xi'),\tau_-(\phi_s(x',\xi')),0)|J|\,ds.
\end{multline*}
Inequality \eqref{T_1K-bound} implies that $T_1 KJ\varphi_+\in L^1(SM)$ for all $\varphi_+\in L^1(\p_+ SM,\,d\mu)$. So, above equation and the Riesz representation theorem for $L^1(SM)$-valued functionals gives us that $\tilde u_3\in L^\infty(\p_+ SM;L^1(SM))$.

We write
$$
\mathbf T^{-1}T_1KJ\varphi_+(x,\xi)=\int_{\p_+ SM}u_3(x,\xi,x',\xi')\varphi_+(x',\xi')\,d\mu(x',\xi'),
$$
where $u_3=\mathbf T^{-1}\tilde u_3$. Our aim is to show that $u_3\in L^\infty(\p_+ SM;\mathcal W)$. By Proposition~\ref{prop} the operator $\tau^{-1}\mathbf T^{-1}$ is bounded from $L^1(SM)$ to $L^1(SM)$, so
$$
\tau^{-1}u_3\in L^\infty(\p_+ SM;L^1(SM)),
$$
or equivalently
$$
u_3\in L^\infty(\p_+ SM;L^1(SM,\tau^{-1}\,d\Sigma^{2n-1})).
$$
Note that we can write
$$
\mathbf G_\mu u_3=-a u_3+T_1 u_3+\tilde u_3.
$$
Since the map $u_3\mapsto -a u_3+T_1 u_3$ is bounded from $L^1(SM,\tau^{-1}\,d\Sigma^{2n-1})$ to $L^1(SM)$ by admissibility, we get that $\mathbf G_\mu u_3\in L^\infty(\p_+ SM; L^1(SM))$ which gives \eqref{u-3}.
\end{proof}

Now we describe the terms in the expansion of the distribution kernel of albedo operator $\mathcal A$.

\begin{Theorem}\label{kernel}
Let $(M,g,m)$ be a simple magnetic system. Assume that the pair $(a,k)$ is admissible and the direct problem is well posed. Then the albedo operator $\mathcal A:\p_+ SM \to \p_- SM$ is bounded and its Schwartz kernel $\alpha$ has the expansion $\alpha=\alpha_1+\alpha_2+\alpha_3$, where
\begin{align*}
\alpha_1(\hat x,\hat \xi,x',\xi')&=E(\hat x,\hat \xi,\tau_-(\hat x,\hat \xi),0)\delta_{\{\phi_{\tau_-(\hat x,\hat \xi)}(\hat x,\hat \xi)\}}(x',\xi'),\\
\alpha_2(\hat x,\hat \xi,x',\xi')&=\int_0^{\tau_+(x',\xi')}\int^0_{\tau_-(\hat x,\hat \xi)}E(\hat x,\hat \xi,s,0) E(x',\xi',0,r)\cdot\\
&\qquad\cdot k\left(\phi_r(x',\xi'),\mathcal P_{\gamma_{\hat x,\hat \xi}(s)}^{\gamma_{x',\xi'}(r)}(\dot\gamma_{\hat x,\hat \xi}(s))\right)\delta_{\gamma_{\hat x,\hat \xi}(s)}(\gamma_{x',\xi'}(r))\,ds\,dr,\notag\\
\alpha_3(\hat x,\hat \xi,x',\xi')&\in L^{\infty}(\p_+ SM;L^1(\p_- SM)).
\end{align*}
\end{Theorem}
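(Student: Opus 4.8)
The plan is to read the Schwartz kernel of $\mathcal A$ directly off Theorem~\ref{u}. By definition $\mathcal A u_+=u|_{\p_- SM}$, where $u$ solves the transport problem with boundary data $u_+$; taking $u_+=\delta_{\{x',\xi'\}}$ shows that the kernel $\alpha$ of $\mathcal A$ is exactly the trace on $\p_- SM$ of the distribution kernel of the solution operator $u_+\mapsto u$. Writing $u=u_1+u_2+u_3$ as in Theorem~\ref{u} and restricting the free variable $(x,\xi)$ to $(\hat x,\hat\xi)\in\p_- SM$, I obtain $\alpha=\alpha_1+\alpha_2+\alpha_3$ with $\alpha_i=u_i|_{\p_- SM}$. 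Boundedness of $\mathcal A$ follows immediately: for $u_+\in L^1(\p_+ SM)$ well-posedness gives a solution $u\in\mathcal W$ whose $\mathcal W$-norm is controlled by $\|u_+\|_{L^1(\p_+ SM)}$, and Proposition~\ref{th:norm} yields $\|u|_{\p_- SM}\|_{L^1(\p_- SM)}\le\|u\|_{\mathcal W}$.

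The three terms are handled according to their regularity. The regular term $\alpha_3$ is the simplest: since $u_3\in L^\infty(\p_+ SM;\mathcal W)$ by \eqref{u-3}, applying Proposition~\ref{th:norm} for almost every fixed parameter $(x',\xi')$ gives $\alpha_3=u_3|_{\p_- SM}\in L^\infty(\p_+ SM;L^1(\p_- SM))$. The single-scattering term $\alpha_2$ is obtained by direct substitution: the right-hand side of \eqref{u-2} is already an explicit expression in $(x,\xi)$, so replacing $(x,\xi)$ by $(\hat x,\hat\xi)\in\p_- SM$ reproduces verbatim the stated formula for $\alpha_2$, the only distributional factor being the delta $\delta_{\gamma_{\hat x,\hat\xi}(s)}(\gamma_{x',\xi'}(r))$ already present in the integrand.

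The ballistic term $\alpha_1$ requires the genuine work. Beginning with \eqref{u-1} and restricting $(x,\xi)$ to $(\hat x,\hat\xi)\in\p_- SM$, note that since $(\hat x,\hat\xi)$ lies on the outgoing boundary, the magnetic geodesic issued from $(x',\xi')$ reaches it only at its exit time $t=\tau_+(x',\xi')$, so the weight $E(x,\xi,\tau_-(x,\xi),0)$ degenerates to the full-path attenuation $E(\hat x,\hat\xi,\tau_-(\hat x,\hat\xi),0)$. To identify $\int_0^{\tau_+(x',\xi')}\delta_{(\hat x,\hat\xi)}(\phi_t(x',\xi'))\,dt$ as a distribution in the variable $(x',\xi')$, I would pair it against a test function $\varphi_+\in C^\infty_c(\p_+ SM)$ and change variables along the flow via Proposition~\ref{Santalo}. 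Tracing the geodesic backward from $(\hat x,\hat\xi)$ shows the only admissible entry point is $(x',\xi')=\phi_{\tau_-(\hat x,\hat\xi)}(\hat x,\hat\xi)$, converting the delta on $SM$ into the boundary delta $\delta_{\{\phi_{\tau_-(\hat x,\hat\xi)}(\hat x,\hat\xi)\}}(x',\xi')$ and producing the claimed expression for $\alpha_1$.

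The main obstacle is precisely this last computation. Because $u_1$ and $u_2$ are genuine distributions and do not lie in $\mathcal W$, Proposition~\ref{th:norm} does not apply to them and their traces must be taken by hand. The delicate point for $\alpha_1$ is to show that restricting the delta-on-$SM$ in \eqref{u-1} to the boundary---together with the simultaneous degeneration of the attenuation factor as $(x,\xi)\to(\hat x,\hat\xi)$---produces exactly the boundary delta above, with the Jacobian of the flow-induced change of variables absorbed by the measure $d\mu$ on $\p_\pm SM$. Here simplicity of $(M,g,m)$ is essential: it guarantees $\tau_\pm$ are finite and continuous and that the exit-to-entry correspondence $(\hat x,\hat\xi)\mapsto\phi_{\tau_-(\hat x,\hat\xi)}(\hat x,\hat\xi)$ is a well-defined diffeomorphism onto $\p_+ SM$, which is what legitimizes the substitution.
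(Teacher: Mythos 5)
Your proposal is correct and follows essentially the same route as the paper: identify $\alpha_i=u_i(\cdot,\cdot,x',\xi')|_{\p_-SM}$ from Theorem~\ref{u}, read $\alpha_1$ off by pairing against a test function $\varphi_+$ (so that $J\varphi_+(\hat x,\hat\xi)=E(\hat x,\hat\xi,\tau_-(\hat x,\hat\xi),0)\varphi_+(\phi_{\tau_-(\hat x,\hat\xi)}(\hat x,\hat\xi))$ forces the boundary delta), substitute $(x,\xi)=(\hat x,\hat\xi)$ in \eqref{u-2} for $\alpha_2$, and apply the trace estimate of Proposition~\ref{th:norm} to $u_3\in L^\infty(\p_+SM;\mathcal W)$ for $\alpha_3$ and to $u\in\mathcal W$ for the boundedness of $\mathcal A$. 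Your identification of Proposition~\ref{th:norm} as the relevant trace result is the right one (the paper's citation of Proposition~\ref{J} at that point appears to be a slip), and your extra care with the Jacobian in the $\alpha_1$ computation is a more explicit rendering of the same argument.
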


\begin{proof}
Actually we have $\alpha_i(\cdot,\cdot,x',\xi')=u_i(\cdot,\cdot,x',\xi')|_{\p_- SM}$, $i=1,2,3$. Let $\varphi_+\in C^\infty_{c}(\p_+ SM)$, then for $(\hat x,\hat \xi)\in \p_- SM$ consider
$$
\varphi_i(\hat x,\hat \xi)=\int_{\p_+ SM}u_i(\hat x,\hat \xi,x',\xi')\varphi_+(x',\xi')\,d\mu(x',\xi').
$$
From \eqref{u-1} we have
$$\varphi_1(\hat x,\hat \xi)=\int_{\p_+ SM}\alpha_1(\hat x,\hat \xi,x',\xi')\varphi_+(x',\xi')\,d\mu(x',\xi')
$$
which proves the expression for $\alpha_1$.

The expression for $\alpha_2$ follows by \eqref{u-2} by setting $(x,\xi)=(\hat x,\hat \xi)\in \p_- SM$. Statement for $\alpha_3$ is true due to \eqref{u-3} and Proposition \ref{J}
\end{proof}

\section{Extracting from the kernel of albedo operator}

Let $d_\mu(x,y)$ be the length of the unit speed magnetic geodesic issued from $x$ and passing through $y$. Let $\psi\in C^\infty_{c}(\mathbb R)$ be such that $0\le \psi\le 1$, $\psi(0)=1$ and $\int_{\mathbb R} \psi(l)\,dl=1$. Define $\psi_{\varepsilon}(l)=\psi(l/{\varepsilon})$.

\begin{Proposition}\label{attenuation}
Let $(M,g,m)$ be a simple magnetic system. Assume that the pair $(a,k)$ is admissible and the direct problem is well posed. Then the following holds in $L^1_\mathrm{loc}(\p_- SM)$
$$
\lim_{\varepsilon\to 0}\int_{\p_+ SM}\alpha(x,\xi,x',\xi')\psi_{\varepsilon}(d_\mu(x',\gamma_{x,\xi}(\tau_-(x,\xi))))\,d\mu(x',\xi')=E(x,\xi,\tau_-(x,\xi),0).
$$
\end{Proposition}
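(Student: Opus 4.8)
The plan is to split the kernel as $\alpha=\alpha_1+\alpha_2+\alpha_3$ according to Theorem~\ref{kernel} and to analyze the three contributions
$$
F^i_\varepsilon(x,\xi)=\int_{\p_+ SM}\alpha_i(x,\xi,x',\xi')\psi_\varepsilon(d_\mu(x',\gamma_{x,\xi}(\tau_-(x,\xi))))\,d\mu(x',\xi'),\quad i=1,2,3,
$$
separately. Write $x_-=\gamma_{x,\xi}(\tau_-(x,\xi))\in\p M$ for the backward exit point of the magnetic geodesic determined by $(x,\xi)$. For $i=1$ the delta in $\alpha_1$ is supported at $\phi_{\tau_-(x,\xi)}(x,\xi)\in\p_+ SM$, whose base point is exactly $x_-$; hence evaluating against $\psi_\varepsilon$ gives $F^1_\varepsilon(x,\xi)=E(x,\xi,\tau_-(x,\xi),0)\,\psi_\varepsilon(d_\mu(x_-,x_-))=E(x,\xi,\tau_-(x,\xi),0)$, independently of $\varepsilon$. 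It therefore remains to prove that $F^2_\varepsilon$ and $F^3_\varepsilon$ tend to $0$ in $L^1_{\mathrm{loc}}(\p_- SM)$.

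For the smooth term I would argue by dominated convergence. Since $0\le\psi_\varepsilon\le 1$ and $\alpha_3\in L^\infty(\p_+ SM;L^1(\p_- SM))$ is in particular jointly integrable on $\p_- SM\times\p_+ SM$ (recall $M$ is compact), the integrand defining $\|F^3_\varepsilon\|_{L^1(\p_- SM)}$ is dominated by $|\alpha_3|$. Moreover $\psi_\varepsilon(d_\mu(x',x_-))\to 0$ for a.e.\ $(x,\xi,x',\xi')$, because $\psi$ has compact support and the set $\{x'=x_-\}$ --- which fixes the base point $x'$ in terms of $(x,\xi)$ --- has measure zero in $\p_- SM\times\p_+ SM$. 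Hence $\|F^3_\varepsilon\|_{L^1(\p_- SM)}\to 0$.

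The main work, and the principal obstacle, is the single-scattering term $\alpha_2$. Here I would bound $\|F^2_\varepsilon\|_{L^1(\p_- SM)}$ directly, crucially integrating in the \emph{output} variable $(x,\xi)\in\p_- SM$. Since all factors ($E$, $k$, $\psi_\varepsilon$) are nonnegative, I may drop absolute values and apply Proposition~\ref{Santalo} twice: once in the output direction to $((x,\xi),s)\mapsto\phi_s(x,\xi)$ and once in the input direction to $((x',\xi'),r)\mapsto\phi_r(x',\xi')$, converting both boundary integrals into integrals over $SM$. The two deltas then collapse: $\delta_{\gamma_{x,\xi}(s)}(\gamma_{x',\xi'}(r))$ forces the crossing points to agree, and integrating it against $d\vol_g$ identifies the scattering point $y$. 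The result is
$$
\|F^2_\varepsilon\|_{L^1(\p_- SM)}\le\int_M\int_{S_yM}\int_{S_yM}k(y,\eta,\zeta)\,\psi_\varepsilon\big(d_\mu(x_-(y,\eta),x_-(y,\zeta))\big)\,d\sigma_y(\zeta)\,d\sigma_y(\eta)\,d\vol_g(y),
$$
where $x_-(y,\cdot)$ denotes the entry point of the geodesic through $y$ in the indicated direction and I have used $E\le 1$. The point of integrating in the output variable is that $\zeta$ now plays the role of the \emph{outgoing} direction of $k$, so that $\int_{S_yM}k(y,\eta,\zeta)\,d\sigma_y(\zeta)=\sigma_p(y,\eta)$; consequently the integrand is dominated by $k(y,\eta,\zeta)$, whose triple integral equals $\int_M\int_{S_yM}\sigma_p(y,\eta)\,d\sigma_y(\eta)\,d\vol_g(y)\le\|\sigma_p\|_{L^\infty(SM)}\,|S^{n-1}|\,\vol_g(M)<\infty$ by admissibility. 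Had one instead integrated in the input variable, $\eta$ would be the variable of integration and the relevant marginal $\int k(y,\eta,\zeta)\,d\sigma_y(\eta)$ would not be controlled by the admissibility hypotheses.

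With an integrable dominating function in hand, it remains to check that $\psi_\varepsilon(d_\mu(x_-(y,\eta),x_-(y,\zeta)))\to 0$ for a.e.\ $(y,\eta,\zeta)$, after which dominated convergence finishes the proof. Since $\psi$ is compactly supported, this limit fails only where $d_\mu(x_-(y,\eta),x_-(y,\zeta))=0$, i.e.\ where the geodesics issued backward from $(y,\eta)$ and $(y,\zeta)$ enter $M$ at the same boundary point. By simplicity of $(M,g,m)$ there is a unique magnetic geodesic between any two points, so both backward geodesics must then coincide with the geodesic from that common entry point to $y$; in particular they have the same tangent at $y$, forcing $\eta=\zeta$. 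Thus the exceptional set lies in the diagonal $\{\eta=\zeta\}\subset S_yM\times S_yM$, of measure zero since $\dim M\ge 3$. This gives $\|F^2_\varepsilon\|_{L^1(\p_- SM)}\to 0$, completing the argument. The delicate points to handle with care are the rigorous justification of the two Santaló changes of variables and the collapse of the distributional deltas --- licensed by the absolutely convergent representation of the $-KJ$ contribution established in Theorem~\ref{u} --- together with the choice to integrate in the output variable, which is precisely what makes the only available bound on $k$, namely $\sigma_p\in L^\infty(SM)$, applicable.
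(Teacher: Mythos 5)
Your proposal is correct and follows essentially the same route as the paper: split $\alpha=\alpha_1+\alpha_2+\alpha_3$ via Theorem~\ref{kernel}, note that $\alpha_1$ yields $E(x,\xi,\tau_-(x,\xi),0)$ exactly since $\psi_\varepsilon(0)=1$, reduce the $\alpha_2$ contribution by the Santal\'o change of variables to an integral of $k$ over a set shrinking to the diagonal of $S_yM\times S_yM$ (which vanishes since $k\in L^1(S^2M)$ by admissibility), and kill the $\alpha_3$ contribution using $\alpha_3\in L^1(\p_-SM\times\p_+SM)$ together with the shrinking support of $\psi_\varepsilon$. The only differences are cosmetic (you invoke dominated convergence where the paper uses absolute continuity of the integral over shrinking support sets, and your "since $\dim M\ge 3$" for the diagonal being null is unnecessary --- it is null for any $n\ge 2$).
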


\begin{proof}
When $\alpha$ is replaced by $\alpha_1$ the result is immediate. We need to show that when $\alpha$ is replaced by $\alpha_2$ and $\alpha_3$ the limits vanish. We consider $\alpha_2$ first.
\begin{multline*}
0\le \int_{\p_- SM}\int_{\p_+ SM}\int_0^{\tau_+(x',\xi')}\int_{\tau_-(x,\xi)}^0 k(\phi_r(x',\xi'),\mathcal P_{\gamma_{x,\xi}(s)}^{\gamma_{x',\xi'}(r)}(\dot\gamma_{x,\xi}(s)))\cdot\\
\times\delta_{\gamma_{x,\xi}(s)}(\gamma_{x',\xi'}(r))\psi_{\varepsilon}\circ d_\mu(x',\gamma_{x,\xi}(\tau_-(x,\xi)))\,ds\,dr\, d\mu(x',\xi')\,d\mu(x,\xi)\\
=\int_M\int_{S^2_y M}k(y,\eta',\eta)\psi_{\varepsilon}\circ d_\mu(\gamma_{y,\eta'}(\tau_-(y,\eta')),\gamma_{y,\eta}(\tau_-(y,\eta)))\,d\sigma_y(\eta')\,d\sigma_y(\eta)\,d\vol_g(y)
\end{multline*}
where $(y,\eta')=\phi_r(x',\xi')$ and $(y,\eta)=\phi_s(x,\xi)$. There exists a constant $C$ such that
\begin{multline*}
\supp \psi_\varepsilon\circ d_\mu(\gamma_{y,\eta'}(\tau_-(y,\eta')),\gamma_{y,\eta}(\tau_-(y,\eta)))\\
\subset W_\varepsilon=\{(y,\eta',\eta)\in S^2 M:\rVert\eta'-\eta\rVert_g<C\varepsilon\}.
\end{multline*}
where $S^{2}M:=\{(x,\xi,\eta):x\in M, \xi,\eta\in S_{x}M\}$ Then from above inequality we have
\begin{multline*}
0\le \int_{\p_-SM}\int_{\p_+ SM}\alpha_2(x,\xi,x',\xi') \psi_{\varepsilon}\circ d_\mu(x',\gamma_{x,\xi}(\tau_-(x,\xi)))\,d\mu(x',\xi')\,d\mu(x,\xi)\\
\le\int_{W_\varepsilon}k(y,\eta',\eta)\,d\sigma_y(\eta')\,d\sigma_y(\eta)\,d\vol_g(y)
\end{multline*}
which tends to zero as $\varepsilon\to 0$ since $k\in L^1(S^2 M)$ and the measure of $W_\varepsilon\to 0$ as $\varepsilon\to 0$.

Finally, for $\alpha_3$
\begin{multline*}
0\le\int_{\p_- SM}\left|\int_{\p_+ SM}\alpha_3(x,\xi,x',\xi')\psi_{\varepsilon}\circ d_\mu(x',\gamma_{x,\xi}(\tau_-(x,\xi)))\,d\mu(x',\xi')\right|\,d\mu(x,\xi)\\
\le\int_{V_\varepsilon}|\alpha_3(x,\xi,x',\xi')|\,d\mu(x',\xi')\,d\mu(x,\xi)\to 0
\end{multline*}
as $\varepsilon\to 0$. Here
\begin{multline*}
\supp \psi_\varepsilon\circ d_\mu(x',\gamma_{x,\xi}(\tau_-(x,\xi)))\\
\subset V_\varepsilon=\{(x,\xi,x',\xi')\in \p_- SM\times\p_+ SM: d_\mu(x',\gamma_{x,\xi}(\tau_-(x,\xi)))<C\varepsilon\}
\end{multline*}
and the limit above holds since, by Theorem \ref{kernel}, $\alpha_3\in L^1(\p_- SM\times\p_+ SM)$ and the measure of $V_\varepsilon\to 0$ as $\varepsilon\to 0$.
\end{proof}

Let $\psi\in C^\infty_{c}(\mathbb R^n)$ be such that $0\le \psi\le 1$, $\psi(x)=0$ for $|x|>1$, $\int_{\mathbb R^n}\psi(x)\,dx=1$. Define $\psi_\varepsilon:S_y M\to \mathbb R$ as
$$
\psi_\varepsilon(\eta)=\frac{1}{\varepsilon^n}\psi\left(\frac{\eta}{\varepsilon}\right),\quad \varepsilon>0,\quad \eta\in S_y M.
$$
Recall that if $f:S_y M\to \mathbb R$ is continuous then
$$
\int_{S_y M}f(\eta'-\eta)\varphi_\varepsilon(\eta)\,d\sigma_y(\eta)\to f(\eta'), \quad \eta'\in S_y M
$$
as $\varepsilon\to 0$.

Fix $(y,\eta',\eta)\in S^2 M$. Now let
$$
Z=\left\{\gamma_{y,\tilde\eta}(\tau_-(y,\tilde\eta)):\tilde\eta\in\Span\{\eta,\eta'\}\right\}\subset\p M.
$$
We let $h_1$ be a defining function for the set $Z$ as follows. For $z\in \p M$ let $\gamma_z$ be the magnetic geodesic issued from $z$ and passing through $y$ such that $\gamma(0)=z$ and $\gamma(1)=y$. Denote by $\pi(z)$ the orthogonal projection of $\dot\gamma(1)$ onto $(\Span\{\eta,\eta'\})^\perp$. Now we define $h_1(z)=\rVert \pi(z)\rVert_g$. For $z\in\p M$, we have $h_1(z)=0$ if and only if $z\in Z$.

Let $\varphi\in C^\infty_{c}(\mathbb R)$ satisfying $0\le\varphi\le 1$, $\varphi(0)=1$, $\int_{\mathbb R}\varphi(x)\,dx=1$ and define $\varphi_\rho:\p M\to \mathbb R$
$$
\varphi_\rho(z)=\varphi\left(\frac{h_1(z)}{\rho}\right),\quad \rho>0,\quad z\in \p M.
$$

Define $y(s)=\gamma_{y,\eta}(s)$ and $b(s)=\mathcal P_y^{y(s)}(\eta')\in S_{y(s)}M$ for $\tau_-(y,\eta)\le s \le \tau_+(y,\eta)$. Denote $x^*=\gamma_{y,\eta'}(\tau_-(y,\eta'))$. Also, define $h(s)=\gamma_{y(s),b(s)}(\tau_-(y(s),b(s)))\in \p M$. Note that $h(0)=x^*$ and $h(s)\in Z$ for each $s$, so $h_1(h(s))=0$.

Denote $\eta^*=dh(s)/ds|_{s=0}\in S_{x^*}\p M$. Note that $\eta^*\ne 0$ since $\eta$ and $\eta'$ are linearly independent. Define $h_2:\p M\to \mathbb R$ by
$$
h_2(x')=\langle(\exp_{x^*}^\mu)^{-1}(x'),\eta^*\rangle_g.
$$
This function is zero only at $x'=x^*$. Moreover, $dh_2(h(s))/ds|_{s=0}=\rVert \eta^*\rVert^2_g\ne 0.$

Define $W:=\{(x,\eta,\xi)\in S^{2}M:\eta\neq\pm\xi\}$. Let $\chi\in C^\infty_{c}(\mathbb R)$ such that $0\le\chi\le 1$, $\chi(0)=1$, $\int_{\mathbb R}\chi(x)\,dx=\rVert\eta^*\rVert^2$ and define $\chi_\delta:\p M\to \mathbb R$
$$
\chi_\delta(x')=\frac{1}{\delta}\chi\left(\frac{h_2(x')}{\delta}\right),\quad \delta>0,\quad x'\in\p M.
$$

\begin{Proposition}\label{scattering}
Let $(M,g,m)$ be a simple magnetic system with $\dim M\ge 3$. Assume that the pair $(a,k)$ is admissible and the direct problem is well posed. If $(y,\eta,\eta')\in S^2M$ with $\eta=\eta'$, then
\begin{align*}
\lim_{\delta\to 0}\lim_{\rho\to 0}\lim_{\varepsilon\to 0}\int_{\p_+ SM}\psi_\varepsilon(\mathcal P^{x'}_y (\eta')-\xi')\chi_\delta(x')\varphi_\rho(x')\alpha(\phi_{\tau_+(y,\eta)}(y,\eta),x',\xi')\,d\mu(x',\xi')\\
=E(\phi_{\tau_+(y,\eta)}(y,\eta),-\tau_+(y,\eta),0) E(\phi_{\tau_-(y,\eta')}(y,\eta'),0,-\tau_-(y,\eta'))k(y,\eta',\eta),
\end{align*}
with the limit holding in $L^1_\mathrm{loc}(W)$.
\end{Proposition}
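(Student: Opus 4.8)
The plan is to substitute the kernel expansion $\alpha=\alpha_1+\alpha_2+\alpha_3$ of Theorem~\ref{kernel} and to prove, in $L^1_{\mathrm{loc}}(W)$, that the single-scattering term $\alpha_2$ produces the stated limit while the ballistic term $\alpha_1$ and the regular term $\alpha_3$ drop out. I fix the outgoing endpoint $(\hat x,\hat\xi)=\phi_{\tau_+(y,\eta)}(y,\eta)$ and read the three mollifiers as successively localizing $(x',\xi')\in\partial_+SM$: $\psi_\varepsilon$ concentrates the incoming velocity $\xi'$ at the parallel transport $\mathcal P^{x'}_y(\eta')$, $\chi_\delta$ concentrates the foot point $x'$ along the direction $\eta^*$, and $\varphi_\rho$ confines $x'$ to the $\rho$-neighborhood of $Z$. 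It is essential to note that in the forward-scattering case $\eta=\eta'$ the set $Z$ degenerates and the transverse vector $\eta^*=h'(0)$ is produced purely by the magnetic twist: since $b(s)=\mathcal P^{y(s)}_y(\eta)$ is the parallel transport of $\eta$ while $\dot\gamma_{y,\eta}(s)$ is not parallel (because $\nabla_{\dot\gamma}\dot\gamma=Y(\dot\gamma)\ne0$), the curve $h(s)$ of entry points genuinely moves and $\eta^*\ne0$. Thus the construction, and the recoverability of the diagonal value $k(y,\eta,\eta)$, rests on the presence of the magnetic field.

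The term $\alpha_3$ is the mildest. By Theorem~\ref{kernel} it belongs to $L^\infty(\partial_+SM;L^1(\partial_-SM))$, hence is jointly integrable on $\partial_-SM\times\partial_+SM$. After the normalizations $\int\psi=1$ and $\int\chi=\|\eta^*\|^2$ the triple mollifier is an approximate identity whose total mass is governed by the unnormalized transverse cutoff $\varphi_\rho$ and tends to zero as $\rho\to0$ because $\dim M\ge3$. Integrating the $L^1$-kernel $\alpha_3$ against mollifiers of vanishing mass sends its contribution to $0$ in $L^1_{\mathrm{loc}}(W)$, exactly as in the $\alpha_3$-estimate in the proof of Proposition~\ref{attenuation}; this is where the dimensional hypothesis enters.

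The substantive computation is the $\alpha_2$ term. I would insert the explicit formula for $\alpha_2$ from Theorem~\ref{kernel}, use the factor $\delta_{\gamma_{\hat x,\hat\xi}(s)}(\gamma_{x',\xi'}(r))$ together with Proposition~\ref{Santalo} and the magnetic exponential map to pass from $(x',\xi',r,s)$ to the scattering point $z=\gamma_{\hat x,\hat\xi}(s)$ on the outgoing geodesic and the incoming velocity there, just as in the derivation of $u_2$ in the proof of Theorem~\ref{u}. The mollifiers $\psi_\varepsilon\chi_\delta\varphi_\rho$ then pin $z$ to $y$ and force the incoming velocity, read in the parallel-transport gauge matching the argument of $k$ in $\alpha_2$, to equal $\eta'$; in the iterated limit $\varepsilon\to0$, $\rho\to0$, $\delta\to0$ the two attenuation factors freeze along $\gamma_{y,\eta}$ to the forward factor $E(\hat x,\hat\xi,-\tau_+(y,\eta),0)$ and the backward factor $E(\phi_{\tau_-(y,\eta')}(y,\eta'),0,-\tau_-(y,\eta'))$, while the surviving scattering factor is $k(y,\eta',\eta)=k(y,\eta,\eta)$. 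The normalization $\int\chi=\|\eta^*\|^2$ together with $\int\psi=\int\varphi=1$ is chosen precisely to cancel the Jacobian of this change of variables, and the non-reversibility of the magnetic flow is what keeps the forward and backward attenuations as the two genuinely distinct factors on the right-hand side.

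The hard part will be the ballistic term $\alpha_1$, and this is the step unique to the diagonal case $\eta=\eta'$. Since $\alpha_1$ is a point mass in $(x',\xi')$ supported at the incoming endpoint $\phi_{\tau_-(\hat x,\hat\xi)}(\hat x,\hat\xi)$ of the outgoing geodesic, and for $\eta=\eta'$ this endpoint is the very point $x^*$ over which the $\alpha_2$-analysis concentrates, one cannot separate ballistic from single scattering by $x'$-localization alone. The mechanism I would exploit is that $\psi_\varepsilon$ is centered at the parallel transport $\mathcal P^{x'}_y(\eta')$, which is the gauge organizing the directional argument of $k$ inside $\alpha_2$; the cutoff is therefore consistent with the single-scattering kernel and retains it. The ballistic mass, by contrast, carries no such gauge: it sits at the bare geodesic tangent $\dot\gamma_{y,\eta}(\tau_-(y,\eta))$ at $x^*$, and the same magnetic twist that forces $\eta^*\ne0$ is exactly what separates this tangent from $\mathcal P^{x^*}_y(\eta)$. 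Hence for $\varepsilon$ small enough $\psi_\varepsilon$ vanishes on the support of the $\alpha_1$ point mass. Making this quantitative — verifying that the twist displaces $\mathcal P^{x^*}_y(\eta)$ off $\dot\gamma_{y,\eta}(\tau_-(y,\eta))$ by a definite amount while not destroying the $\alpha_2$ contribution — is the crux, and the step I expect to be most delicate.
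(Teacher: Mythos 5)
Your proposal is organized around the hypothesis $\eta=\eta'$, taking what is evidently a typo in the statement at face value, and this derails the whole argument. The conclusion is asserted in $L^1_{\mathrm{loc}}(W)$ with $W=\{(x,\eta,\xi)\in S^2M:\eta\neq\pm\xi\}$, which \emph{excludes} the diagonal, so a claim about $\eta=\eta'$ cannot even be expressed in that topology; the constructions of $Z$, $h_1$ and $\eta^*$ use $\Span\{\eta,\eta'\}$ and its orthocomplement, and the paper explicitly notes $\eta^*\neq0$ ``since $\eta$ and $\eta'$ are linearly independent''; and the proposition is later invoked (proof of Theorem~\ref{thB}) only for $(y,\eta',\eta)\in W$. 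The intended hypothesis is thus $\eta'\neq\pm\eta$. On the true, off-diagonal statement your self-declared crux evaporates: the ballistic kernel $\alpha_1$ is a point mass at $(x',\xi')=\phi_{\tau_-(y,\eta)}(y,\eta)$, whose base point $\gamma_{y,\eta}(\tau_-(y,\eta))$ is a fixed positive distance from $x^*=\gamma_{y,\eta'}(\tau_-(y,\eta'))$ (by simplicity there is a unique magnetic geodesic from a boundary point through $y$, so distinct directions at $y$ have distinct entry points), and the paper disposes of $\alpha_1$ in one line: the pairing produces a multiple of $\chi_\delta(\gamma_{y,\eta}(\tau_-(y,\eta)))$, which vanishes once $\delta$ is small. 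No velocity-space separation via $\psi_\varepsilon$, and no appeal to the magnetic twist, is needed or used.

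Your diagonal mechanism also cannot be repaired within the paper's hypotheses: it degenerates when $\Omega=0$ (simple magnetic systems include $m=0$, in which case parallel transport along $\gamma_{y,\eta}$ carries $\eta$ exactly onto the tangent and your separating offset is zero), there is no uniform lower bound on the twist in general --- the quantitative step you admit is unproven --- and for a merely admissible $k$, which is only $L^1$ in the angular variables, the diagonal value $k(y,\eta,\eta)$ is not even well defined almost everywhere, while an $L^1_{\mathrm{loc}}(W)$ limit is blind to the measure-zero diagonal in any case. Two secondary points: your $\alpha_2$ outline (Santal\'o-type change of variables, collapsing the delta onto the geodesic, mollifiers converging to the two attenuation factors times $k(y,\eta',\eta)$) does match the paper, except that the nondegeneracy $d\tilde s/ds|_{s=0}=\|\eta^*\|_g^2\neq0$ licensing the substitution $\tilde s=h_2(h(s))$ comes from the linear independence of $\eta$ and $\eta'$, not from the Lorentz force; and your $\alpha_3$ argument by ``vanishing total mass'' is insufficient as stated, since the mollifiers have sup norm of order $\varepsilon^{-n}\delta^{-1}$ and small mass against an $L^1$ kernel proves nothing --- what the paper actually uses is that the support of $\varphi_\rho$ shrinks to a $(3n-2)$-dimensional variety inside the $(4n-4)$-dimensional $\partial_-SM\times\partial_+SM$, which is precisely where the hypothesis $n\geq3$ enters.
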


\begin{proof}
Replacing $\alpha$ by $\alpha_1$ we obtain a multiple of $\chi_\delta(\gamma_{y,\eta}(\tau_-(y,\eta)))$ that is zero for small enough $\delta$ unless $\eta'$ and $\eta$ linearly dependent.

Replacing $\alpha$ by $\alpha_2$ and making the change of variables $(\hat y,\hat\eta)=\phi_r(x',\xi')$
\begin{multline*}
I_1:=\int_{\p_+ SM}\psi_\varepsilon(\mathcal P^{x'}_y(\eta')-\xi')\chi_\delta(x')\varphi_\rho(x')\alpha_2(\phi_{\tau_+(y,\eta)}(y,\eta),x',\xi')\,d\mu(x',\xi')\\
=\int_{M}\int_{S_{\hat y} M}\psi_\varepsilon(\mathcal P_y^{\gamma_{\hat y,\hat \eta}(\tau_-(\hat y,\hat \eta))}(\eta')-\dot\gamma_{\hat y,\hat \eta}(\tau_-(\hat y,\hat \eta)))\chi_\delta(\gamma_{\hat y,\hat \eta}(\tau_-(\hat y,\hat \eta)))\\
\times \varphi_\rho(\gamma_{\hat y,\hat \eta}(\tau_-(\hat y,\hat \eta)))\displaystyle\int_{\tau_-(\hat y,\hat \eta)}^{\tau_+(\hat y,\hat \eta)}E(\phi_{\tau_+(y,\eta)}(y,\eta),s-\tau_+(y,\eta),0)\\
\times E(\phi_{\tau_-(\hat y,\hat\eta)}(\hat y,\hat\eta),0,-\tau_-(\hat y,\hat\eta))k(\hat y,\hat\eta,\dot y(s))\delta_{y(s)}(\hat y)\,ds\,d\sigma_{\hat y}(\hat\eta)\,d\vol_g(\hat y).
\end{multline*}
Changing the order of integration from $\,ds\,d\sigma_{\hat y}(\hat\eta)\,d\vol_g(\hat y)$ to $\,d\sigma_{\hat y}(\hat\eta)\,d\vol_g(\hat y)\,ds$ we obtain
\begin{multline*}
I_1=\int_{\tau_-(y,\eta)}^{\tau_+(y,\eta)}\int_{S_{y(s)} M}\psi_\varepsilon(b(s)-\hat \eta)\chi_\delta(\gamma_{y(s),\hat\eta}(\tau_-(y(s),\hat\eta)))\varphi_\rho(\gamma_{y(s),\hat\eta}(\tau_-(y(s),\hat\eta)))\\
\times E(\phi_{\tau_+(y,\eta)}(y,\eta),s-\tau_+(y,\eta),0)E(\phi_{\tau_-(y(s),\hat\eta)}(y(s),\hat\eta),0,-\tau_-(y(s),\hat\eta))\\
\times k(y(s),\hat\eta,\dot y(s))\,d\sigma_{y(s)}(\hat\eta)\,ds
\end{multline*}
which tends to the following as $\varepsilon\to 0$
\begin{multline*}
I_2:=\int_{\tau_-(y,\eta)}^{\tau_+(y,\eta)}\chi_\delta(h(s)) E(\phi_{\tau_+(y,\eta)}(y,\eta),s-\tau_+(y,\eta),0)\\
\times E(h(s),\dot \gamma_{y(s),b(s)}(\tau_-(y(s),b(s))),0,-\tau_-(y(s),b(s))) k(y(s),b(s),\dot y(s))\,ds,
\end{multline*}
where we used that $\varphi_\rho(h(s))=1$. Perform the change of variable $s$ to $\tilde s=h_2(h(s))$ in above integral. This is possible since for sufficiently small $\delta$ for all $s$ in $\supp \chi_\delta(h(s))$ the following holds $\frac{d\tilde s}{ds}\ne 0$.

So, for sufficiently small $\tilde s_0$,
\begin{multline*}
I_2=\int_{-\tilde s_0}^{\tilde s_0}\frac{1}{\delta}\chi\left(\frac{\tilde s}{\delta}\right) E(\phi_{\tau_+(y,\eta)}(y,\eta),s(\tilde s)-\tau_+(y,\eta),0)\\
\times E(h(s(\tilde s)),\dot \gamma_{y(s(\tilde s)),b(s(\tilde s))}(\tau_-(y(s(\tilde s)),b(s(\tilde s)))),0,-\tau_-(y(s(\tilde s)),b(s(\tilde s))))\\
\times k(y(s(\tilde s)),b(s(\tilde s)),\dot y(s(\tilde s)))\frac{ds}{d\tilde s}\,d\tilde s
\end{multline*}
tends to the next value as $\delta\to 0$
$$
E(\phi_{\tau_+(y,\eta)}(y,\eta),-\tau_+(y,\eta),0) E(\phi_{\tau_-(y,\eta')}(y,\eta'),0,-\tau_-(y,\eta'))k(y,\eta',\eta).
$$

Finally, let $\beta(y,\eta,\eta')\in C^\infty_{c}(W)$. Then
\begin{multline}\label{lastline}
\int_M\int_{S_y M}\int_{S_y M}\bigg|\int_{\p_+ SM}\psi_\varepsilon(\mathcal P^{x'}_y (\eta')-\xi')\chi_\delta(x')\varphi_\rho(x')\\
 \times\beta(y,\eta,\eta')\alpha_3(\phi_{\tau_+(y,\eta)}(y,\eta),x',\xi')\,d\mu(x',\xi')\bigg|\,d\sigma_y(\eta')\,d\sigma_y(\eta)\,d\vol_g(y)\\
\le \frac{1}{\varepsilon\delta}\int_{\p_- SM}\int_{\tau_-(x,\xi)}\int_{S_{y(s)}M}\int_{\p_+ SM}\varphi_\rho(x')\beta(\phi_s(x,\xi),\eta')\\
\times\alpha_3(x,\xi,x',\xi')\,d\mu(x',\xi')\,d\eta'_{\gamma_{x,\xi}(s)}\,ds\,d\mu(x,\xi)\\
\le-\frac{1}{\varepsilon\delta}\int_{\p_+ SM}\int_{\p_- SM}\varphi_\rho(x') C_\beta(x,\xi)\tau_-(x,\xi)\\
\times\alpha_3(x,\xi,x',\xi')\,d\mu(x,\xi)\,d\mu(x',\xi')
\end{multline}
where
$$
C_\beta(x,\xi)=\sup_{s\in[\tau_-(x,\xi),0]}\int_{S_{y(s)}M}\beta(\phi_s(x,\xi),\eta')\,d\sigma_{\gamma_{x,\xi}(s)}(\eta').
$$
The integral \eqref{lastline} tends to zero as $\rho\to 0$, since the support of $\varphi_\rho$ is a $(3n-2)$-dimensional variety in the $(4n-4)$-dimensional domain $\p_- SM\times\p_+ SM$.
\end{proof}

\section{Uniqueness and non-uniqueness results}
\begin{proof}[Proof of Theorem~\ref{thA}]
Taking the limit as in Proposition \ref{attenuation} we determine
$$
\int_{\tau_-(x,\xi)}^0 a(\gamma_{x,\xi}(t))\,dt
$$
for all $(x,\xi)\in \p_-SM$. Thus we know the magnetic ray transform of $a$ which is invertible by \cite{DPSU} for isotropic $a$.

Now, taking the limit as in Proposition~\ref{scattering}, we determine $k$ for all $(x,\eta',\eta)\in W$. Since collision kernel $k$ is an $L^1$ function, knowing $k$ in $W$ is equivalent to knowing~$k$.
\end{proof}

\begin{proof}[Proof of Theorem~\ref{thB}]
Sufficiency has been verified before the statement of the theorem, so we need only show necessity. Proposition \ref{attenuation} implies that for any $(x,\xi)\in\p_- SM$
$$
\int_{\tau_-(x,\xi)}^0 a(\phi_t(x,\xi))\,dt=\int_{\tau_-(x,\xi)}^0\tilde a(\phi_t(x,\xi))\,dt.
$$
Define
$$
v(x,\xi):=\int_{\tau_-(x,\xi)}^0 (a-\tilde{a})(\phi_t(x,\xi))\,dt,\quad (x,\xi)\in SM.
$$
Then $v$ satisfies the following conditions: $v|_{\p_+SM}=0$ and $v|_{\p_- SM}=0$. Thus $v=0$ for all $x\in\p M$ and for almost every $\xi\in S_x M$. Define a function $w$ as follows
$$
w(x,\xi):=\exp{v(x,\xi)},\quad (x,\xi)\in SM.
$$
Then the following holds
$$
\mathbf G_\mu\log w(x,\xi)=\mathbf G_\mu v(x,\xi)=(a-\tilde{a})(x,\xi)
$$
which proves the first part of \eqref{gauge-eq}.

By Proposition \ref{scattering} for any $(y,\eta',
\eta)\in W$ it holds that
\begin{multline*}
E(\phi_{\tau_+(y,\eta)}(y,\eta),-\tau_+(y,\eta),0) E(\phi_{\tau_-(y,\eta')}(y,\eta'),0,-\tau_-(y,\eta'))k(y,\eta',\eta)\\
=\tilde E(\phi_{\tau_+(y,\eta)}(y,\eta),-\tau_+(y,\eta),0) \tilde E(\phi_{\tau_-(y,\eta')}(y,\eta'),0,-\tau_-(y,\eta'))\tilde k(y,\eta',\eta).
\end{multline*}
It is easy to see that
$$
\tilde E(\phi_{\tau_+(y,\eta)}(y,\eta),-\tau_+(y,\eta),0)=E(\phi_{\tau_+(y,\eta)}(y,\eta),-\tau_+(y,\eta),0)\frac{1}{w(y,\eta)}
$$
and
$$
\tilde E(\phi_{\tau_-(y,\eta')}(y,\eta'),0,-\tau_-(y,\eta'))=E(\phi_{\tau_-(y,\eta')}(y,\eta'),0,-\tau_-(y,\eta')) w(y,\eta').
$$
These give us the second part of \eqref{gauge-eq}.
\end{proof}

\begin{proof}[Proof of Theorem~\ref{thC}]
Interchanging $\eta'$ and $\eta$ in the second part of \eqref{gauge-eq} we get
\begin{align*}
\tilde k(y,\eta',\eta)&=\frac{w(y,\eta)}{w(y,\eta')}k(y,\eta',\eta)=\frac{w(y,\eta)}{w(y,\eta')}k(y,\eta,\eta')\\
&=\left(\frac{w(y,\eta)}{w(y,\eta')}\right)^2\tilde k(y,\eta,\eta')=\left(\frac{w(y,\eta)}{w(y,\eta')}\right)^2\tilde k(y,\eta',\eta).
\end{align*}
By positivity of $k$ we get that $w(y,\eta)=w(y,\eta')$ for all $\eta,\eta' \in S_y M$ which gives that $w=w(y)$ is independent of $\eta$ and $\tilde k=k$ by the second part of \eqref{gauge-eq}.

Using the first part of \eqref{gauge-eq} we obtain
$$
\tilde{a}(y,\eta)-a(y,\eta)=\mathbf G_\mu \log w(y)
$$
which proves item (i).

Next, item (ii) is proved since
$$
\tilde{a}(y,\eta)-a(y,\eta)=\tilde{a}(y,-\eta)-a(y,-\eta)=-\mathbf G_\mu \log w(y)
$$
combined with the previous equality gives $\tilde a=a$.
\end{proof}


\section{Statement of the main result on stability estimate}
In our next result we show stability of the gauge equivalent classes. The method of proof requires the total travel time to be strictly greater than zero for all magnetic geodesics in $M$. This can be done, without loss of generality, if we extend metric $g$ and magnetic potential $m$ to $g_1$ and $m_1$ respectively on a slightly larger manifold $M_1$, with $M\subset M_1^\textrm{int}$, in such a way that $(M_1,g_1,m_1)$ remains a simple magnetic system. As in \cite{MST2} we reduce the inverse problem on $M$ to the inverse problem on $M_1$, where attenuation coefficients and scattering coefficients are extended as zero in $M_1\setminus M$.

Let $(a,k)$ and $(\tilde{a},\tilde{k})$ be coefficients such that the direct problems in $(M,g,m)$ are well posed, and $\mathcal{A}$ and $\tilde{\mathcal{A}}$ the corresponding albedo operators. Define $a,\tilde{a},k,\tilde{k}$ to be zero in $M_{1}\backslash M$, then the forward problems in $(M_{1},g_{1},m_{1})$ are also well posed and the albedo operators $\mathcal{A}_{1}$ and $\tilde{\mathcal{A}}_{1}$ are well posed maps between function spaces on
$$
\p_\pm SM_1=\{(x,\xi)\in \p(SM_1):\pm\langle\xi,\nu_1(x)\rangle>0\}
$$
where $\nu_1$ is the inward unit normal to $\p M_1$. We can introduce a volume form $d\mu_1$ on $\p_\pm SM_1$ in a similar way as we introduced $d\mu$ on $\p_\pm SM$ in Section~\ref{3.1}. As in \cite{MST2}, when the direct problems for $M$ are well posed in $L^p$, $1\le p\le \infty$, the following holds
$$
\|\mathcal{A}-\tilde{\mathcal{A}}\|_{\mathcal{L}(L^{1}(\partial_{+}SM);L^{1}(\partial_{-}SM))}
=\|\mathcal{A}_{1}-\tilde{\mathcal{A}}_{1}\|_{\mathcal{L}(L^{1}(\partial_{+}SM_{1});L^{1}(\partial_{-}SM_{1}))}.
$$
This extension process allows to consider the problem on $(M_{1},g_{1},m_{1})$ where the coefficients are compactly supported without changing stability. Therefore, from now on we assume the coefficients are originally compactly supported on $M$ and define the quantity
\begin{equation} \label{c_0}
c_{0}:=\inf\{\tau(x,\xi):(x,\xi)\in SM\}>0.
\end{equation}

To state the stability result, we introduce some notations: define two norms for scattering coefficients
$$
\begin{array}{rcl} \vspace{1ex}
\|k\|_{\infty,1} &=& \|\sigma_{p}\|_{L^{\infty}(SM)}=\textrm{ess sup}_{(x,\eta)\in SM} \displaystyle\int_{S_{x}M}|k(x,\eta,\xi)|\,d\sigma_x(\xi),\\
\|k\|_{1} &=& \displaystyle\int_{M}\int_{S_{x}M}\int_{S_{x}M}|k(x,\eta,\xi)|\,d\sigma_x(\eta)\,d\sigma_x(\xi)\,d\vol_g(x).
\end{array}
$$
Denote by $\langle a,k\rangle$ the gauge equivalence class in the sense of \eqref{gauge-eq} which contains the pair $(a,k)$; the distance $\Delta$ between equivalence classes is defined to be the infimum of the distances between all possible pairs of representatives, i.e.
$$
\Delta(\langle a,k\rangle,\langle\tilde{a},\tilde{k}\rangle)=\inf_{(a',k')\in \langle a,k\rangle,\,(\tilde{a}',\tilde{k}')\in \langle\tilde{a},\tilde{k}\rangle}\max\{\|a'-\tilde{a}'\|_{L^{\infty}(SM)},\|k'-\tilde{k}'\|_{1}\}.
$$
For $\Sigma,\rho>0$, define the class
$$
\mathcal{U}_{\Sigma,\rho}:=\{(a,k) \textrm{ is admissible}:\|a\|_{L^{\infty}(SM)}\leq\Sigma,\|k\|_{\infty,1}\leq\rho\}.
$$

\begin{Theorem}\label{thE}
Let $(M,g,m)$ be a simple magnetized Riemannian manifold of dimension $n\geq 3$. Let $(a,k),(\tilde{a},\tilde{k})\in\mathcal{U}_{\Sigma,\rho}$ be such that they satisfy either \eqref{subcritical1} or \eqref{subcritical2}. Then there exists a constant $C$ depending on $\Sigma,\rho,n$ and $c_{0}$ in \eqref{c_0} such that
$$
\Delta(\langle a,k\rangle,\langle\tilde{a},\tilde{k}\rangle)\leq C\|\mathcal{A}-\tilde{\mathcal{A}}\|_{\mathcal{L}(L^{1}(\partial_{+}SM);L^{1}(\partial_{-}SM))}.
$$
More precisely, there exists a representative $(a',k')\in \langle a,k\rangle$ such that
\begin{equation} \label{stab-a}
\|a'-\tilde{a}\|_{L^{\infty}(SM)} \leq C\|\mathcal{A}-\tilde{\mathcal{A}}\|_{\mathcal{L}(L^{1}(\partial_{+}SM);L^{1}(\partial_{-}SM))},
\end{equation}
\begin{equation} \label{stab-k}
\|k'-\tilde{k}\|_{1} \leq C\|\mathcal{A}-\tilde{\mathcal{A}}\|_{\mathcal{L}(L^{1}(\partial_{+}SM);L^{1}(\partial_{-}SM))}.
\end{equation}
\end{Theorem}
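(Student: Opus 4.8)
The plan is to exploit the singular decomposition $\alpha=\alpha_1+\alpha_2+\alpha_3$ of Theorem~\ref{kernel} together with the standard identity
$$
\|\mathcal A-\tilde{\mathcal A}\|_{\mathcal L(L^1(\partial_+ SM);L^1(\partial_- SM))}=\operatorname*{ess\,sup}_{(x',\xi')\in\partial_+ SM}\|(\alpha-\tilde\alpha)(\cdot,\cdot,x',\xi')\|_{\mathcal M(\partial_- SM)},
$$
the total-variation (measure) norm taken in the outgoing variable. Because the three pieces carry progressively weaker singularities --- $\alpha_1$ is an atom carried by the ballistic exit relation, $\alpha_2$ a measure on the single-scattering locus, and $\alpha_3\in L^\infty(\partial_+ SM;L^1(\partial_- SM))$ absolutely continuous --- each can be isolated by testing exactly as in Propositions~\ref{attenuation} and~\ref{scattering}. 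I would recover the attenuation stability from $\alpha_1-\tilde\alpha_1$ and the scattering stability from $\alpha_2-\tilde\alpha_2$, treating the other pieces as controlled lower-order errors. Throughout, write $\Phi(x,\xi)=\phi_{\tau_+(x,\xi)}(x,\xi)\in\partial_- SM$ for the forward exit map, which is common to both systems since $g,m$ are fixed, and abbreviate $E(x,\xi):=E(x,\xi,\tau_-(x,\xi),0)$.

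First I would prove \eqref{stab-a}. Since $\Phi$ is the same for $(a,k)$ and $(\tilde a,\tilde k)$, the atomic part of $(\alpha-\tilde\alpha)(\cdot,\cdot,x',\xi')$ at $\Phi(x',\xi')$ is carried solely by $\alpha_1-\tilde\alpha_1$ and equals $(E-\tilde E)(\Phi(x',\xi'))\,\delta_{\Phi(x',\xi')}$ up to a bounded Jacobian common to both systems: $\alpha_2$ produces no atom there (forward scattering is a null set) and $\alpha_3$ is absolutely continuous. As the total-variation norm dominates the mass of any atom, this atomic structure combined with the mollified extraction of Proposition~\ref{attenuation} yields $\|E-\tilde E\|_{L^\infty(\partial_- SM)}\le C\|\mathcal A-\tilde{\mathcal A}\|$. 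Because $\|a\|_\infty,\|\tilde a\|_\infty\le\Sigma$ and $\tau\ge c_0$, the factors $E,\tilde E$ are bounded away from $0$ and $\infty$, so the ballistic datum $D:=\int_{\tau_-(\cdot)}^0(a-\tilde a)(\phi_t(\cdot))\,dt=-\log(E/\tilde E)$ on $\partial_- SM$ obeys $\|D\|_{L^\infty(\partial_- SM)}\le C\|\mathcal A-\tilde{\mathcal A}\|$. Now I build the gauge: set $v(x,\xi)=\int_{\tau_-(x,\xi)}^0(a-\tilde a)(\phi_t(x,\xi))\,dt$, so $\mathbf G_\mu v=a-\tilde a$, $v|_{\partial_+ SM}=0$, $v|_{\partial_- SM}=D$, and correct it by the explicit interpolant
$$
\psi(x,\xi)=\frac{-\tau_-(x,\xi)}{\tau(x,\xi)}\,D(\Phi(x,\xi)),
$$
which vanishes on $\partial_+ SM$, equals $D$ on $\partial_- SM$, and satisfies $\mathbf G_\mu\psi=D(\Phi(x,\xi))/\tau(x,\xi)$, hence $\|\mathbf G_\mu\psi\|_{L^\infty(SM)}\le c_0^{-1}\|D\|_{L^\infty(\partial_- SM)}$. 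Then $w:=\exp(v-\psi)$ is an admissible gauge, since $w|_{\partial SM}=1$, and the representative $a':=a-\mathbf G_\mu\log w=\tilde a+\mathbf G_\mu\psi\in\langle a,k\rangle$ satisfies $\|a'-\tilde a\|_{L^\infty(SM)}=\|\mathbf G_\mu\psi\|_{L^\infty(SM)}\le C\|\mathcal A-\tilde{\mathcal A}\|$, which is \eqref{stab-a}.

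For \eqref{stab-k} I would pass to the gauge-invariant modulated kernel
$$
\mathcal K(y,\eta',\eta)=E(\phi_{\tau_+(y,\eta)}(y,\eta),-\tau_+(y,\eta),0)\,E(\phi_{\tau_-(y,\eta')}(y,\eta'),0,-\tau_-(y,\eta'))\,k(y,\eta',\eta),
$$
which is precisely the quantity extracted in Proposition~\ref{scattering} and which, by the boundary computation in the proof of Theorem~\ref{thB} (the prefactors transform by boundary values of $w$, which are $1$), is unchanged under any gauge. The quantified form of Proposition~\ref{scattering}, where the hypothesis $n\ge3$ is essential because it renders the single-scattering locus genuinely lower-dimensional so that $\alpha_1$ and $\alpha_3$ contribute only lower-order errors, gives $\|\mathcal K-\tilde{\mathcal K}\|_1\le C\|\mathcal A-\tilde{\mathcal A}\|$. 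Writing $\mathcal E'$ and $\tilde{\mathcal E}$ for the products of the two attenuation prefactors attached to $(a',k')$ and $(\tilde a,\tilde k)$, gauge invariance gives $\mathcal E'k'=\mathcal K$, so from the identity $\tilde{\mathcal E}(k'-\tilde k)=(\mathcal K-\tilde{\mathcal K})-(\mathcal E'-\tilde{\mathcal E})k'$ together with the bounds $\inf\tilde{\mathcal E}\ge c>0$, $\|k'\|_1\le C\rho$ (from $\Sigma,\rho,c_0$ and compact support), and $\|\mathcal E'-\tilde{\mathcal E}\|_{L^\infty}\le C\|a'-\tilde a\|_{L^\infty}\le C\|\mathcal A-\tilde{\mathcal A}\|$, I obtain $\|k'-\tilde k\|_1\le C\|\mathcal A-\tilde{\mathcal A}\|$, which is \eqref{stab-k}. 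Taking the maximum of \eqref{stab-a} and \eqref{stab-k} bounds $\Delta(\langle a,k\rangle,\langle\tilde a,\tilde k\rangle)$ by the same quantity.

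The main obstacle is the passage from the operator norm to a \emph{pointwise} ($L^\infty$) bound on the attenuation in the second step, and the quantitative separation of the single-scattering layer from the ballistic and multiply-scattered parts in the third. In the uniqueness proof these separations are only needed in the limit; here they must be made effective, with constants uniform in $\Sigma,\rho,c_0$ and in the mollification parameters, controlling the near-forward single-scattering contribution to the ballistic atom and the measures of the supports appearing in Propositions~\ref{attenuation} and~\ref{scattering}. This is exactly the role of the preparatory estimates of Section~8, and it is where the restriction $n\ge3$ enters.
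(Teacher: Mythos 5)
Your proposal follows essentially the same route as the paper: the pointwise bound on $E-\tilde E$ extracted from the ballistic part of the kernel, the trial gauge $\exp(v)$ corrected by the interpolant $(-\tau_-/\tau)\,D\circ\Phi$ (which is literally the paper's $\tilde w$), and an $L^1$ bound on the single-scattering layer converted into a bound on $k'-\tilde k$ using the already-established attenuation estimate and the boundedness of the exponential prefactors. The "quantified form of Proposition~\ref{scattering}" you invoke is implemented in the paper's Theorem~\ref{1st-estimate} by testing against approximate identities in the incoming variable and sign-valued functions $h_m$ concentrating on the single-scattering locus, which is precisely the effective separation you correctly flag as the remaining work.
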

This generalizes previously known results in Euclidean and Riemannian cases \cite{MST2,MST3}.

\section{Preliminary estimates}
In this section, as a preparation, we prove some preliminary estimates.
\begin{Proposition}\label{exist-1-map}
There is a family of maps $w_{\varepsilon,x'_0,\xi'_0}\in L^1(\p_+ SM)$, for $(x'_0,\xi'_0)\in\p_+SM$ and $\varepsilon>0$, such that $\|w_{\varepsilon,x'_0,\xi'_0}\|_{L^1(\p_+SM)}=1$ and for any $f\in L^\infty(\p_+ SM)$ we have
\begin{equation}\label{exist-1-map-int}
\lim_{\varepsilon\to0}\int_{\p_+ SM}w_{\varepsilon,x'_0,\xi'_0}(x',\xi')f(x',\xi')\,d\mu(x',\xi')=f(x'_0,\xi'_0)
\end{equation}
whenever $(x'_0,\xi'_0)$ is in the Lebesgue set of $f$. In particular, \eqref{exist-1-map-int} holds for almost every $(x'_0,\xi'_0)\in \p_+ SM$.
\end{Proposition}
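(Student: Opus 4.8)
The plan is to realize $w_{\varepsilon,x'_0,\xi'_0}$ as a normalized indicator of a shrinking ball and to read off \eqref{exist-1-map-int} from the Lebesgue differentiation theorem. First I would fix an auxiliary smooth Riemannian metric on the $(2n-2)$-dimensional manifold $\p_+ SM$ (for instance the restriction of the Sasaki metric of $SM$), which furnishes a distance and geodesic balls $B_\varepsilon(x'_0,\xi'_0)$ of radius $\varepsilon$ about $(x'_0,\xi'_0)$. Since the density $|\langle\xi',\nu(x')\rangle|$ is continuous and strictly positive on $\p_+ SM$, the measure $\,d\mu=|\langle\xi',\nu(x')\rangle|\,d\Sigma^{2n-2}$ is a finite Radon measure on $\p_+ SM$, positive on every open set except near the glancing set $\{\langle\xi',\nu\rangle=0\}$, which is a $\mu$-null set.

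Next I would set
\[
w_{\varepsilon,x'_0,\xi'_0}(x',\xi')=\frac{1}{\mu\big(B_\varepsilon(x'_0,\xi'_0)\big)}\,\mathbf 1_{B_\varepsilon(x'_0,\xi'_0)}(x',\xi').
\]
This function is nonnegative and integrates to $1$ against $\,d\mu$, so $\|w_{\varepsilon,x'_0,\xi'_0}\|_{L^1(\p_+ SM)}=1$ is immediate, and for any $f\in L^\infty(\p_+ SM)$ the left-hand side of \eqref{exist-1-map-int} is precisely the $\mu$-average $\mu(B_\varepsilon)^{-1}\int_{B_\varepsilon}f\,d\mu$. The heart of the matter is then the Lebesgue differentiation theorem for the Radon measure $\,d\mu$: covering $\p_+ SM$ by countably many coordinate charts in which geodesic balls are bi-Lipschitz to Euclidean balls and invoking the Besicovitch covering theorem, one obtains that for $\mu$-almost every $(x'_0,\xi'_0)$ these averages converge to $f(x'_0,\xi'_0)$, and more sharply that the convergence holds at every Lebesgue point, where by definition $\mu(B_\varepsilon)^{-1}\int_{B_\varepsilon}|f-f(x'_0,\xi'_0)|\,d\mu\to 0$. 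The estimate
\[
\left|\frac{1}{\mu(B_\varepsilon)}\int_{B_\varepsilon}f\,d\mu-f(x'_0,\xi'_0)\right|\le\frac{1}{\mu(B_\varepsilon)}\int_{B_\varepsilon}|f-f(x'_0,\xi'_0)|\,d\mu
\]
then yields \eqref{exist-1-map-int} at each Lebesgue point, and since $\mu$-a.e.\ point of an $L^1_{\mathrm{loc}}(\mu)$ function (in particular of a bounded function) is a Lebesgue point, the ``in particular'' assertion follows.

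The main obstacle is purely measure-theoretic: one must run the differentiation theorem for $\,d\mu$ itself rather than for the Riemannian volume $\,d\Sigma^{2n-2}$, because the weight $|\langle\xi',\nu\rangle|$ degenerates on the glancing set. This is resolved by arguing chart by chart and applying the Besicovitch differentiation theorem for arbitrary Radon measures on $\mathbb R^{2n-2}$; the only pointwise requirement is $\mu(B_\varepsilon(x'_0,\xi'_0))>0$ for small $\varepsilon$, which holds at every point of positive $\mu$-density and hence at $\mu$-a.e.\ point. No reversibility or other dynamical input enters, so the magnetic structure of $(M,g,m)$ plays no role in this estimate.
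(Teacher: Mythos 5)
Your construction and the paper's are the same in spirit---a normalized bump shrinking to $(x'_0,\xi'_0)$ plus a Lebesgue differentiation theorem---but the implementations differ in a way worth noting. The paper works in a coordinate chart $(u,v)$ on $\p_+SM$ and builds the reciprocal of the density of $d\mu$ into the weight, setting $w_{\varepsilon,x'_0,\xi'_0}=\bigl(|\langle\nu(x'),\xi'\rangle|\sqrt{\det g}\bigr)^{-1}\varphi_\varepsilon(u)\varphi_\varepsilon(v)$ with $\varphi_\varepsilon$ a normalized Euclidean bump; the density then cancels in the pairing with $d\mu$, the integral becomes a plain Euclidean average of $f(x'(u),\xi'(u,v))$ against $du\,dv$, and the classical approximate-identity theorem (Folland, Theorem 8.15) finishes the proof with no differentiation theory for $\mu$ itself. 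You instead take $w_{\varepsilon}=\mu(B_\varepsilon)^{-1}\mathbf 1_{B_\varepsilon}$ and differentiate the measure $\mu$ directly. That is a valid alternative, but the justification you give is the one soft spot: the Besicovitch covering theorem is not a bi-Lipschitz-invariant statement, so ``geodesic balls are bi-Lipschitz to Euclidean balls'' does not by itself license it. The repair is immediate and is essentially the paper's observation: on $\p_+SM$ (where $\langle\xi',\nu\rangle>0$ by definition, so there is no degeneracy to worry about at any fixed base point) $d\mu$ has a continuous, locally bounded-above-and-below density with respect to coordinate Lebesgue measure, hence geodesic balls form a ``nicely shrinking'' family and the ordinary Lebesgue differentiation theorem applies; Besicovitch is not needed. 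With that substitution your argument is complete, and both routes give convergence at every Lebesgue point, which is all that is used later.
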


\begin{proof}
For $(x'_0,\xi'_0)\in\p_+SM$ and $\varepsilon>0$ sufficiently small, let $(x',\xi'):U\times V\subset\mathbb R^{n-1}\times\mathbb R^{n-1}\to \p(SM)$ be a coordinate chart near $(x'_0,\xi'_0)=(x'(0),\xi'(0))$. Let $\,d\Sigma^{2n-2}(x',\xi')=\,d\Sigma^{2n-2}(x'(u),\xi'(u,v))=\sqrt{\det g}\,dv\,du$ be the expression for the volume form in this coordinate chart. For $(x',\xi')\in\p_+SM$, define
$$
w_{\varepsilon,x'_0,\xi'_0}(x',\xi')=\frac{1}{|\langle\nu(x'),\xi'\rangle|\sqrt{\det g}}\varphi_\varepsilon(u(x'))\varphi_\varepsilon(v(x',\xi'))
$$
where $\varphi(u)=1/\omega_{n-1}$ for $|u|<1$, $\varphi(u)=0$ for $|u|\ge1$, and $\varphi_\varepsilon(u)=\varphi(u/\varepsilon)/\varepsilon^{n-1}$. By $\omega_{n-1}$ we denote the volume of the unit ball in $\mathbb R^{n-1}$. Then for any $\varepsilon>0$
$$
\int_{\mathbb R^{n-1}}\varphi_\varepsilon(u)\,du=1.
$$
With above coordinates
$$
d\mu(x',\xi')=|\langle\nu(x'),\xi'\rangle|\,d\Sigma^{2n-2}(x',\xi')=|\langle\nu(x'),\xi'\rangle|\sqrt{\det g}\,du\,dv,
$$
so we have
\begin{multline*}
\int_{\p_+SM}w_{\varepsilon,x'_0,\xi'_0}(x',\xi')f(x',\xi')\,d\mu(x',\xi')\\
=\int_{\p_+SM}\varphi_\varepsilon(u(x'))\varphi_\varepsilon(v(x',\xi'))f(x',\xi')\frac{1}{\sqrt{\det g}}\,d\Sigma^{2n-2}(x',\xi')\\
=\int_{\mathbb R^{n-1}\times\mathbb R^{n-1}}\varphi_\varepsilon(u)\varphi_\varepsilon(v)f(x'(u),\xi'(u,v))\,dv\,du.
\end{multline*}
Applying above results to the function $f\equiv 1$ we get
$$
\int_{\p_+SM}w_{\varepsilon,x'_0,\xi'_0}(x',\xi')\,d\mu(x',\xi')=1,
$$
the proof is completed by using \cite[Theorem 8.15]{F}.
\end{proof}

For $(x,\xi,\eta)\in S^{2}M$ we define
$$
F(x,\xi,\eta)=E(x,\xi,\tau_-(x,\xi),0)E(x,\eta,0,\tau_+(x,\eta))
$$
which is the total attenuation along the broken magnetic geodesic
$$
\gamma_{x,\xi,\eta}(t)=\begin{cases}
\gamma_{x,\xi}(t)&\text{ if }\tau_-(x,\xi)\le t\le 0,\\
\gamma_{x,\eta}(t)&\text{ if }0\le t\le \tau_+(x,\eta).
\end{cases}
$$
\begin{Theorem}\label{1st-estimate}
Let $(a,k)$, $(\tilde a,\tilde k)$ be admissible pairs. For almost every $(x'_0,\xi'_0)\in\p_+ SM$ the following estimates hold: For $n\ge 2$,
\begin{equation}\label{pre-1}
|E-\tilde E|(x'_0,\xi'_0,0,\tau_+(x'_0,\xi'_0))\le\|\mathcal A-\tilde{\mathcal A}\|.
\end{equation}
For $n>2$, with $y(r)=\gamma_{x'_0,\xi'_0}(r)$,
\begin{multline}\label{pre-2}
\int_0^{\tau_+(x'_0,\xi'_0)}\int_{S_{y(r)}M}F(y(r),\dot y(r),\eta)(k-\tilde k)(y(r),\dot y(r),\eta)\,d\eta_{y(r)}\,dr\le \|\mathcal A-\tilde{\mathcal A}\|\\
+\int_0^{\tau_+(x'_0,\xi'_0)}\int_{S_{y(r)}M}|F-\tilde F|(y(r),\dot y(r),\eta)\tilde k(y(r),\dot y(r),\eta)\,d\eta_{y(r)}\,dr.
\end{multline}
where $\|\cdot\|=\|\cdot\|_{\mathcal{L}(L^{1}(\partial_{+}SM);L^{1}(\partial_{-}SM))}$.
\end{Theorem}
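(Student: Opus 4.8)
The plan is to probe both transport problems with the concentrated incoming sources $w_{\varepsilon,x'_0,\xi'_0}$ supplied by Proposition~\ref{exist-1-map}, to read off the singular structure of the two albedo kernels from the decomposition $\alpha=\alpha_1+\alpha_2+\alpha_3$ of Theorem~\ref{kernel}, and to convert the difference of outputs into $\|\mathcal A-\tilde{\mathcal A}\|$ through the elementary bound
\[
\|(\mathcal A-\tilde{\mathcal A})w_{\varepsilon,x'_0,\xi'_0}\|_{L^1(\p_- SM)}\le\|\mathcal A-\tilde{\mathcal A}\|\,\|w_{\varepsilon,x'_0,\xi'_0}\|_{L^1(\p_+ SM)}=\|\mathcal A-\tilde{\mathcal A}\|.
\]
Both estimates are thus the quantitative counterparts of the limiting extractions carried out in Proposition~\ref{attenuation} and Proposition~\ref{scattering}; the essential point that makes this possible is that $(M,g,m)$ is common to both systems, so that the exit times $\tau_\pm$, the magnetic geodesics $\gamma_{x,\xi}$, the parallel transport and the scattering relation all coincide, and only the pairs $(E,k)$ and $(\tilde E,\tilde k)$ differ.

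For \eqref{pre-1} I would isolate the ballistic term $\alpha_1$. Its contribution to $\mathcal A w_{\varepsilon,x'_0,\xi'_0}$ equals $E(\hat x,\hat\xi,\tau_-(\hat x,\hat\xi),0)\,w_{\varepsilon,x'_0,\xi'_0}(\phi_{\tau_-(\hat x,\hat\xi)}(\hat x,\hat\xi))$, an approximate Dirac mass sitting at the common exit point $\phi_{\tau_+(x'_0,\xi'_0)}(x'_0,\xi'_0)$; subtracting the corresponding $\tilde{\mathcal A}$-term and pulling back through the measure-preserving scattering relation $(\hat x,\hat\xi)\mapsto\phi_{\tau_-(\hat x,\hat\xi)}(\hat x,\hat\xi)$, Proposition~\ref{exist-1-map} forces the resulting mass to converge to $|E-\tilde E|(x'_0,\xi'_0,0,\tau_+(x'_0,\xi'_0))$ at every Lebesgue point. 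Since $\alpha_2,\alpha_3$ and $\tilde\alpha_2,\tilde\alpha_3$ are genuine $L^1$ densities on $\p_-SM$ that do not concentrate at the exit point, their mass over a shrinking neighborhood of that point tends to zero by absolute continuity of the integral, exactly as these terms were shown to vanish in Proposition~\ref{attenuation}. Combining the lower bound $\liminf_{\varepsilon\to0}\|(\mathcal A-\tilde{\mathcal A})w_{\varepsilon,x'_0,\xi'_0}\|_{L^1(\p_-SM)}\ge|E-\tilde E|(x'_0,\xi'_0,0,\tau_+(x'_0,\xi'_0))$ with the operator-norm bound above yields \eqref{pre-1}.

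For \eqref{pre-2} I would instead isolate the single-scattering term $\alpha_2$. Integrating its contribution to $\mathcal A w_{\varepsilon,x'_0,\xi'_0}$ over $\p_-SM$ and applying Santal\'o's formula (Proposition~\ref{Santalo}) to change variables from the outgoing point to the scattering position $y(r)=\gamma_{x'_0,\xi'_0}(r)$ and the scattered direction $\eta\in S_{y(r)}M$, one checks that the two attenuation exponentials recombine into the factor $F(y(r),\dot y(r),\eta)$, so that in the limit $\varepsilon\to0$ this produces $\int_0^{\tau_+(x'_0,\xi'_0)}\int_{S_{y(r)}M}F(y(r),\dot y(r),\eta)\,k(y(r),\dot y(r),\eta)\,d\eta_{y(r)}\,dr$, with the analogous $\tilde F\tilde k$ quantity arising from $\tilde{\mathcal A}$. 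The operator-norm bound then controls $\int_0^{\tau_+}\int_{S_{y(r)}M}(Fk-\tilde F\tilde k)\,d\eta_{y(r)}\,dr$ by $\|\mathcal A-\tilde{\mathcal A}\|$; writing $Fk-\tilde F\tilde k=F(k-\tilde k)+(F-\tilde F)\tilde k$ and transposing the second summand gives \eqref{pre-2}. Here the ballistic term must first be excised, since it contributes only over a shrinking neighborhood of the single exit point and drops out, and the multiple-scattering remainder $\alpha_3\in L^\infty(\p_+SM;L^1(\p_-SM))$ must be shown negligible by the same lower-dimensional-support argument that makes the $\varphi_\rho$-limit vanish in Proposition~\ref{scattering}.

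The main obstacle, in both parts, is precisely this separation of the three kernel components inside a single $L^1(\p_-SM)$ norm: one must guarantee that the non-concentrating scattering densities neither cancel nor inflate the concentrating ballistic mass for \eqref{pre-1}, and that the ballistic and multiple-scattering pieces do not pollute the single-scattering integral for \eqref{pre-2}. The tools are the absolute continuity of the $L^1$ integral over the ballistic exit locus, a single point in the $(2n-2)$-dimensional boundary and hence negligible once $n\ge2$, together with the transversality of generic scattered directions that forces $n>2$ in the single-scattering extraction, exactly as the set $W$ and the splitting $\Span\{\eta,\eta'\}$ are used in Proposition~\ref{scattering}. Throughout, the shared geometry of the two systems forces all geodesic data to agree, so that the entire discrepancy is carried by $E-\tilde E$ and $k-\tilde k$; making these uniform-in-$\varepsilon$ integrability estimates rigorous is the delicate heart of the argument, directly paralleling the vanishing arguments already established in Propositions~\ref{attenuation} and~\ref{scattering}.
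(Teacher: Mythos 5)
Your treatment of \eqref{pre-1} is essentially the paper's argument: restricting the $L^1(\p_-SM)$ mass of $(\mathcal A-\tilde{\mathcal A})w_{\varepsilon,x'_0,\xi'_0}$ to a shrinking neighbourhood of the exit point is the same as the paper's pairing with $h_m=$ the indicator of that neighbourhood, and the double limit ($\varepsilon\to0$ first, then the neighbourhood) correctly isolates the ballistic mass $|E-\tilde E|$ while the $L^1$ densities $\alpha_2,\alpha_3$ contribute nothing by absolute continuity.

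For \eqref{pre-2}, however, your plan has a genuine gap. You propose to integrate the contribution of $\alpha_2-\tilde\alpha_2$ over all of $\p_-SM$ (equivalently, to pair with $h\equiv1$ off the ballistic exit point) and to dismiss $\alpha_3-\tilde\alpha_3$ "by the same lower-dimensional-support argument." But that argument has nothing to act on once you integrate over the whole of $\p_-SM$: the multiple-scattering kernel $\alpha_3(\cdot,\cdot,x'_0,\xi'_0)$ is an honest $L^1$ function spread over all of $\p_-SM$, its total mass is of size $\|\tau\sigma_p\|^2_{L^\infty}$ rather than of size $\|\mathcal A-\tilde{\mathcal A}\|$, and it does not vanish in any limit. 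The point of the paper's construction is that for $n>2$ the single-scattering measure $\alpha_2(\cdot,\cdot,x'_0,\xi'_0)$ is \emph{singular}: it is carried by the positive-codimension set of outgoing states $(\hat x,\hat\xi)$ whose incoming ray actually meets the ballistic geodesic $\gamma_{x'_0,\xi'_0}$. The paper therefore tests against functions $h_m$ supported on the set of rays meeting a $1/m$-tube $\mathcal N_m(x'_0,\xi'_0)$ around that geodesic; this captures all of $\alpha_2$, kills $\mathcal I_1$, and makes $\mathcal I_3(|h_m|)\to0$ because the support of $h_m$ shrinks to a null set — this is precisely where $n>2$ is used, not merely in "transversality of scattered directions." A second, smaller omission: on that support the paper takes $h_m=\sgn(k-\tilde k)$ evaluated at the scattering vertex of the broken geodesic, which is what turns the duality bound into an estimate on $\int\!\!\int F\,|k-\tilde k|$ (the form actually invoked in Section~9 to bound $\|k'-\tilde k\|_1$); your pairing with $h\equiv1$ only controls the signed quantity $\int\!\!\int(Fk-\tilde F\tilde k)$, which is too weak for the subsequent stability argument. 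Both defects are repaired simultaneously by adopting the paper's choice of $h_m$.
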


\begin{proof}
Let $(x'_0,\xi'_0)\in\p_+SM$ and let $w_{\varepsilon,x'_0,\xi'_0}\in L^1(\p_+ SM)$ be a family of maps defined as in Proposition \ref{exist-1-map}. We write $w_\varepsilon$ instead of $w_{\varepsilon,x'_0,\xi'_0}$ to simplify notations without any misunderstanding, since $(x'_0,\xi'_0)$ is fixed.

Let $h\in L^\infty(\p_-SM)$ with $\|h\|_{L^{\infty}(\p_{-}SM)}\le1$. Since $\|w_\varepsilon\|_{L^1(\p_+SM)}=1$, the mapping properties of the albedo operator imply that
\begin{equation}\label{1}
\left|\int_{\p_- SM}h(\hat x,\hat\xi)[\mathcal A-\tilde{\mathcal A}]w_\varepsilon(\hat x,\hat\xi)\,d\mu(\hat x,\hat\xi)\right|\le\|\mathcal A-\tilde{\mathcal A}\|.
\end{equation}
Since the albedo operator can be decomposed into three terms, we evaluate each of the three terms in
$$
\int_{\p_- SM}h(\hat x,\hat\xi)[\mathcal A-\tilde{\mathcal A}]w_\varepsilon(\hat x,\hat\xi)\,d\mu(\hat x,\hat\xi).
$$

We evaluate the first term using the first formula in Theorem~\ref{kernel}
\begin{multline*}
\mathcal I_1(h,\varepsilon):=\int_{\p_- SM}h(\hat x,\hat\xi)[\mathcal A_1-\tilde{\mathcal A}_1]w_\varepsilon(\hat x,\hat\xi)\,d\mu(\hat x,\hat\xi)\\
=\int_{\p_+ SM}h(\phi_{\tau_+(x',\xi')}(x',\xi'))w_\varepsilon(x',\xi')\hspace{100pt}\\
\times\left[ E(x',\xi',0,\tau_+(x',\xi'))-\tilde E(x',\xi',0,\tau_+(x',\xi'))\right]\,d\mu(x',\xi').
\end{multline*}
Since the integrand above is in $L^\infty(\p_+ SM)$ we may apply \eqref{exist-1-map-int} to get the following for a.e. $(x'_0,\xi'_0)\in\p_+ SM$
\begin{multline}\label{i_1}
\mathcal I_1(h)(x'_0,\xi'_0):=\lim_{\varepsilon\to0}\mathcal I_1(h,\varepsilon)\\
=h(\phi_{\tau_+(x'_0,\xi'_0)}(x'_0,\xi'_0))\,\left[ E(x'_0,\xi'_0,0,\tau_+(x'_0,\xi'_0))-\tilde E(x'_0,\xi'_0,0,\tau_+(x'_0,\xi'_0))\right].
\end{multline}

Next, we evaluate the second term using the second formula in Theorem \ref{kernel}. Set $y(r)=\gamma_{x',\xi'}(r)$
\begin{multline*}
\mathcal I_2(h,\varepsilon):=\int_{\p_- SM}h(\hat x,\hat\xi)[\mathcal A_2-\tilde{\mathcal A}_2]w_\varepsilon(\hat x,\hat\xi)\,d\mu(\hat x,\hat\xi)\\
=\int_{\p_+ SM}w_\varepsilon(x',\xi')\int_0^{\tau_+(x',\xi')}\int_{S_{y(r)}M}h(\phi_{\tau_+(y(r),\eta)}(y(r),\eta))\\
\times\Big[F(y(r),\dot y(r),\eta)k(y(r),\dot y(r),\eta)-\tilde F(y(r),\dot y(r),\eta)\tilde k(y(r),\dot y(r),\eta)\Big]\\
\times\,d\sigma_{y(r)}(\eta)\,dr\,d\mu(x',\xi').
\end{multline*}
Applying \eqref{exist-1-map-int} we obtain for a.e. $(x'_0,\xi'_0)\in \p_+SM$ with $y(r)=\gamma_{x'_0,\xi'_0}(r)$
\begin{multline}\label{i_2}
\mathcal I_2(h)(x'_0,\xi'_0):=\lim_{\varepsilon\to0}\mathcal I_2(h,\varepsilon)\\
=\int_0^{\tau_+(x'_0,\xi'_0)}\int_{S_{y(r)}M}h(\phi_{\tau_+(y(r),\eta)}(y(r),\eta))\\
\times\Big[F(y(r),\dot y(r),\eta)k(y(r),\dot y(r),\eta)-\tilde F(y(r),\dot y(r),\eta)\tilde k(y(r),\dot y(r),\eta)\Big]\\
\times\,d\sigma_{y(r)}(\eta)\,dr.
\end{multline}
We can write $\mathcal I_2=\mathcal I_{2,1}+\mathcal I_{2,2}$ with
\begin{multline}\label{i_2,1}
\mathcal I_{2,1}(h)(x'_0,\xi'_0):=\int_0^{\tau_+(x'_0,\xi'_0)}\int_{S_{y(r)}M}h(\phi_{\tau_+(y(r),\eta)}(y(r),\eta))\\
\times F(y(r),\dot y(r),\eta)(k-\tilde k)(y(r),\dot y(r),\eta)\,d\sigma_{y(r)}(\eta)\,dr
\end{multline}
and
\begin{multline}\label{i_2,2}
|\mathcal I_{2,2}(h)(x'_0,\xi'_0)|\le\int_0^{\tau_+(x'_0,\xi'_0)}\int_{S_{y(r)}M}|F-\tilde F|(y(r),\dot y(r),\eta)\\
\times\tilde k(y(r),\dot y(r),\eta)\,d\sigma_{y(r)}(\eta)\,dr.
\end{multline}
Now, we consider the third term
\begin{multline*}
\mathcal I_3(h,\varepsilon):=\int_{\p_- SM}h(\hat x,\hat\xi)[\mathcal A_3-\tilde{\mathcal A}_3]w_\varepsilon(\hat x,\hat \xi)\,d\mu(\hat x,\hat \xi)\\
 =\int_{\p_+ SM}w_\varepsilon(x',\xi')\int_{\p_-SM}h(\hat x,\hat\xi)(\alpha_3-\tilde\alpha_3)(\hat x,\hat\xi,x',\xi')\,d\mu(\hat x,\hat \xi)\,d\mu(x',\xi').
\end{multline*}
By Theorem \ref{kernel} the integrand above is in $L^\infty(\p_+ SM)$, then using \eqref{exist-1-map-int} we have for a.e. $(x'_0,\xi'_0)\in\p_+ SM$
\begin{equation}\label{i_3}
\mathcal I_3(h)(x'_0,\xi'_0):=\lim_{\varepsilon\to0}\mathcal I_3(h,\varepsilon)=\int_{\p_-SM}h(\hat x,\hat\xi)(\alpha_3-\tilde\alpha_3)(\hat x,\hat\xi,x'_0,\xi'_0)\,d\mu(\hat x,\hat\xi).
\end{equation}

Taking limit $\varepsilon\to0$ in \eqref{1} we get
\begin{equation}\label{2}
|\mathcal I_1(h)(x'_0,\xi'_0)+\mathcal I_2(h)(x'_0,\xi'_0)|\le \|\mathcal A-\tilde{\mathcal A}\|+\mathcal I_3(|h|)(x'_0,\xi'_0)
\end{equation}
for a.e. $(x'_0,\xi'_0)\in\p_+SM$ and for any $h\in L^\infty(\p_-SM)$ with $\|h\|_{L^{\infty}(\p_{-}SM)}=1$.

To prove estimates \eqref{pre-1} and \eqref{pre-2} we will consider sequence of functions $h$ for each case. First, we prove the estimate \eqref{pre-1}. Let $h_m\in L^\infty(\p_-SM)$ be $1$ in a shrinking neighbourhood of $(\hat x_0,\hat\xi_0):=\phi_{\tau_+(x'_0,\xi'_0)}(x'_0,\xi'_0)$. Consider the set
$$
\mathcal O_m(\hat x_0,\hat\xi_0)=\{\xi\in S_{\hat x_0}M:\|\xi-\hat\xi_0\|_{g(\hat x_0)}<\frac{1}{m}\}
$$
and extend $h_m$ to all $S_{\hat x_0}M$ as characteristic function of $\mathcal O_m(\hat x_0,\hat\xi_0)$. Then extend $h_m$ to all $SM$ by
$$
h_m(x,\xi)=
\begin{cases}
0&\text{if}\quad d_{\mu}(x,\hat x_0)\ge\frac{1}{m}\\
h_m(\hat x_0,\mathcal P_x^{\hat x_0}(\xi))&\text{if}\quad d_{\mu}(x,\hat x_0)<\frac{1}{m}.
\end{cases}
$$
By \eqref{i_1} this gives that
$$
\mathcal I_1(h_m)(x'_0,\xi'_0)=E(x'_0,\xi'_0,0,\tau_+(x'_0,\xi'_0))-\tilde E(x'_0,\xi'_0,0,\tau_+(x'_0,\xi'_0))
$$
independent of $m$. From \eqref{i_2} we have $\lim_{m\to\infty}\mathcal I_2(h_m)=0$, since for any $r$ the support of $h_m(\phi_{\tau_+(y(r),\eta)}(y(r),\eta))$ in $\eta\in S_{y(r)}M$ shrinks to $\dot y(r)$. From \eqref{i_3} and the third part of Theorem \ref{kernel}, we have $\lim_{m\to\infty}I_3(|h_m|)=0$, since the support shrinks to one point.

Next, we prove the estimate \eqref{pre-2}. Let $\mathcal N_m(x'_0,\xi'_0)\subset M$ be the tubular neighbourhood of the magnetic geodesic $y(r)=\gamma_{x'_0,\xi'_0}(r)$, $0\le r\le \tau_+(x'_0,\xi'_0)$ of radius $\frac{1}{m}$. Define a sequence $h_m\in L^\infty(\p_- SM)$ in several steps. First, set $h_m(\hat x,\hat\xi)=0$ if $\hat x\in \mathcal N_m(x'_0,\xi'_0)$. Note that $\mathcal I_1(h_m)=0$ for all $m$. Second, for $(\hat x,\hat\xi)\in \p_- SM$ with $\hat x\notin\mathcal N_m(x'_0,\xi'_0)$ we set $h_m(\hat x,\hat\xi)=0$ if the magnetic geodesic $z(s)=\gamma_{\hat x,\hat\xi}(s)$, $\tau_-(\hat x,\hat\xi)\le s\le 0$ does not intersect $\mathcal N_m(x'_0,\xi'_0)$. Third, if $\gamma_{\hat x,\hat\xi}$ intersects $\mathcal N_m(x'_0,\xi'_0)$ let $0\le r(\hat x,\hat\xi)\le\tau_+(x'_0,\xi'_0)$ and $\tau_-(\hat x,\hat\xi)\le s(\hat x,\hat\xi)\le 0$ be such that
$$
d_\mu(y(r(\hat x,\hat\xi)),z(s(\hat x,\hat\xi)))=\min_{r,s}\{d_\mu(y(r),z(s))\}
$$
and define
$$
h_m(\hat x,\hat\xi)=\sgn\left[(k-\tilde k)\left(y(r(\hat x,\hat\xi)),\dot y(r(\hat x,\hat\xi)),\mathcal P_{z(s(\hat x,\hat\xi))}^{y(r(\hat x,\hat\xi))}\left(\dot z(s(\hat x,\hat\xi))\right)\right)\right].
$$
Note that if $(\hat x,\hat\xi)=\phi_{\tau_+(y(r),\eta)}(y(r),\eta)$ for some $\eta\in S_{y(r)}M$, that is the broken magnetic geodesic with $(x'_0,\xi'_0)$ as the beginning and with $(\hat x,\hat \xi)$ as the end. the value $h_m(\hat x,\hat\xi)$ takes the sign of $k-\tilde k$ at the point of scattering. Note that the support of $h_m$ shrinks to a negligible set in $\p_- SM$ as $m\to\infty$ since $n>2$.

Applying \eqref{2} to $h_m$ we get
$$
|\mathcal I_{2,1}(h_m)|(x'_0,\xi'_0)\le \|\mathcal A-\tilde{\mathcal A}\|+\mathcal I_3(|h_m|)(x'_0,\xi'_0)+|\mathcal I_{2,2}(h_m)|(x'_0,\xi'_0)
$$
since $\mathcal I_1(h_m)=0$. The support of $h_m$ shrinks to zero in $\p_- SM$ as $m\to\infty$, so for almost every $(x'_0,\xi'_0\in \p_+SM)$ we have $\lim_{m\to\infty}\mathcal I_3(|h_m|)(x'_0,\xi'_0)=0$. Using \eqref{i_2,1}, \eqref{i_2,2} and $|\mathcal I_{2,1}(h_m)|=\mathcal I_{2,1}(|h_m|)$ we get for almost every $(x'_0,\xi'_0)\in\p_+SM$
\begin{multline*}
\lim_{m\to\infty}\mathcal I_{2,1}(h_m)(x'_0,\xi'_0)\\
=\int_0^{\tau_+(x'_0,\xi'_0)}\int_{S_{y(r)}M}F(y(r),\dot y(r),\eta)(k-\tilde k)(y(r),\dot y(r),\eta)\,d\sigma_{y(r)}(\eta)\,dr\\
\le\|\mathcal A-\tilde{\mathcal A}\|+\displaystyle\int_0^{\tau_+(x'_0,\xi'_0)}\int_{S_{y(r)}M}|F-\tilde F|(y(r),\dot y(r),\eta)\\
\times\tilde k(y(r),\dot y(r),\eta)\,d\sigma_{y(r)}(\eta)\,dr,
\end{multline*}
which is the estimate \eqref{pre-2}.
\end{proof}

\section{Stability estimates}
With the above preliminary estimates we give the proof of Theorem~\ref{thE}. Take two pairs $(a,k),(\tilde a,\tilde k)\in U_{\Sigma,\rho}$ and denote
$$
\varepsilon:=\|\mathcal A-\tilde{\mathcal A}\|.
$$
We construct pair $(a',k')\in \langle a,k\rangle$ such that statements of the theorem hold. Define first the ``trial" gauge transformation
\begin{equation}\label{trial}
w(x,\xi):=\exp\left\{-\int_{\tau_-(x,\xi)}^0(\tilde a-a)(\phi_s(x,\xi))\,ds\right\} \quad\text{a.e.}\quad(x,\xi)\in SM.
\end{equation}
Then $w$ is positive, $w|_{\p_+SM}=1$, $\mathbf G_\mu w\in L^\infty(SM)$ and
\begin{equation}\label{tilde a=a-Dlogw}
\tilde a(x,\xi)=a(x,\xi)-\mathbf G_\mu \log w(x,\xi).
\end{equation}
However the function $w$ is not equal to $1$ on $\p_-SM$. We will construct gauge transform, i.e. a function $\tilde w\in L^\infty(SM)$ with $1/\tilde w,\mathbf G_\mu \tilde w\in L^\infty(SM)$ and $\tilde w|_{\p SM}=~1$. The constructed $\tilde w$ will give us $(a',k')$. More precisely, for almost every $(x,\xi)\in SM$, we define $\tilde w(x,\xi)$ by
$$
\log \tilde w(x,\xi)=\log w(x,\xi)+\frac{\tau_-(x,\xi)}{\tau(x,\xi)}\log w(\phi_{\tau_+(x,\xi)}(x,\xi)).
$$
Simplicity assumption implies that $\tilde w\in L^\infty(SM)$ and it is easy to see that $\tilde w|_{\p SM}=~1$. Since the functions $\tau(x,\xi)$ and $\log w(\phi_{\tau_+(x,\xi)}(x,\xi))$ are constant along the orbits of magnetic flow $\phi$, we have
$$
\mathbf G_\mu \log \tilde w(x,\xi)=\mathbf G_\mu\log w(x,\xi)-\frac{\log w(\phi_{\tau_+(x,\xi)}(x,\xi))}{\tau(x,\xi)}\in L^\infty(SM),
$$
where we have used $\mathbf G_\mu \tau_-(x,\xi)=-1$.

Now, define the pair $(a',k')\in\langle a,k\rangle$ by
\begin{equation}\label{def a'}
a'(x,\xi):=a(x,\xi)-\mathbf G_\mu\log\tilde w(x,\xi)\quad\mathrm{and}\quad k'(x,\xi',\xi):=\frac{\tilde w(x,\xi)}{\tilde w(x,\xi')}k(x,\xi',\xi).
\end{equation}
The albedo operator $\mathcal A'$, corresponding to $(a',k')$, is equal to $\mathcal A$, and therefore
$$
\|\mathcal A'-\tilde{\mathcal A}\|=\|\mathcal A-\tilde{\mathcal A}\|=\varepsilon.
$$

We need to compare the pair $(a',k')$ with $(\tilde a,\tilde k)$ to show that they satisfy \eqref{stab-a} and \eqref{stab-k}. We begin with estimating $\log w$ at $\p_-SM$ which will be useful in estimating $\tilde a-a'$. By \eqref{pre-1} we have for almost every $(x'_0,\xi'_0)\in \p_+ SM$
$$
|E-\tilde E|(x'_0,\xi'_0,0,\tau_+(x'_0,\xi'_0))\le\varepsilon.
$$
Denoting $(\hat x_0,\hat \xi_0)=\phi_{\tau_+(x'_0,\xi'_0)}(x'_0,\xi'_0)$ we may rewrite above inequality as
\begin{equation}\label{upper}
|E-\tilde E|(\hat x_0,\hat \xi_0,\tau_-(\hat x_0,\hat \xi_0),0)\le\varepsilon.
\end{equation}
Introduce the notation $\diam_\mu(M):=\|\tau\|_{L^\infty(SM)}$. By the Mean Value Theorem applied to $u\mapsto \exp(-u)$ we obtain
\begin{multline}\label{lower}
|E-\tilde E|(\hat x_0,\hat \xi_0,\tau_-(\hat x_0,\hat \xi_0),0)=\exp(-u_0)\left|\int^0_{\tau_-(\hat x_0,\hat\xi_0)}(\tilde a-a)(\phi_t(\hat x_0,\hat\xi_0))\,dt\right|\\
=\exp\left(-u_0\right)|\log w(\hat x_0,\hat\xi_0)|
\ge\exp(-\diam_\mu (M)\Sigma)|\log w(\hat x_0,\hat\xi_0)|
\end{multline}
where $u_0(\hat x_0,\hat\xi_0)$ is a value between next quantities
$$
-\int_{\tau_-(\hat x_0,\hat\xi_0)}a(\phi_t(\hat x_0,\hat\xi_0))\,dt\quad\text{and}\quad-\int_{\tau_-(\hat x_0,\hat\xi_0)}\tilde a(\phi_t(\hat x_0,\hat\xi_0))\,dt.
$$
Comparing \eqref{upper} and \eqref{lower} we get the following estimate for $\log w$
\begin{equation}\label{logw-estimate}
|\log w(\hat x,\hat\xi)|\le\exp(\diam_\mu(M)\Sigma)\varepsilon,\quad\text{for a.e.}\quad (\hat x,\hat\xi)\in\p_- SM.
\end{equation}

Now,we estimate the difference between $\tilde a$ and $a'$. Using \eqref{tilde a=a-Dlogw},\eqref{def a'} and \eqref{logw-estimate},  for almost every $(x,\xi)$ we get
\begin{multline}\label{prefinal-a}
|(\tilde a-a')(x,\xi)|=|(\tilde a-a)(x,\xi)+(a-a')(x,\xi)|\\
=|\mathbf G_\mu\log\tilde w(x,\xi)-\mathbf G_\mu\log w(x,\xi)|\\
\frac{\log w(\phi_{\tau_+(x,\xi)}(x,\xi))}{\tau(x,\xi)}\le\varepsilon\frac{\exp(\diam_\mu(M)\Sigma)}{\tau(x,\xi)}.
\end{multline}
Since the coefficients are supported away from $\p M$, without loss of generality, from \eqref{prefinal-a} we obtain
$$
\|\tilde a-a'\|_\infty\le\varepsilon\frac{\exp(\diam_\mu(M)\Sigma)}{c_0}
$$
with $c_0$ defined in \eqref{c_0} . Estimate \eqref{stab-a} is proven.

Next, we prove estimate \eqref{stab-k}. From now on we work with the case $n>2$. From \eqref{prefinal-a} we get
$$
|a'(x,\xi)|\le\varepsilon\frac{\exp(\diam_\mu(M)\Sigma)}{\tau(x,\xi)}+\Sigma.
$$

Let $(x'_0,\xi'_0)\in\p_+ SM$ and $0<r<\tau_+(x'_0,\xi'_0)$ and $\eta\in S_y M$ with $y=\gamma_{x'_0,\xi'_0}(r)$. Using it we obtain
\begin{equation}\label{f-low}
\begin{aligned}
|F'(y(r),\dot y(r),\eta)|&=E(x'_0,\xi'_0,0,r)E(y(r),\eta,0,\tau_+(y(r),\eta))\\
&\ge\exp\left\{-2\left(\varepsilon\exp(\diam_\mu(M)\Sigma)+\diam_\mu(M)\Sigma\right)\right\}.
\end{aligned}
\end{equation}
Using non-negativity of $\tilde a$ and $a'$ we have
\begin{multline}\label{prefinal-k}|\tilde F-F'|(y(r),\dot y(r),\eta)\\
\le|\tilde E-E'|(x'_0,\xi'_0,0,r)+|\tilde E-E'|(y(r),\eta,0,\tau_+(y(r),\eta))\\
\le\bigg|\displaystyle\int_0^r[\tilde a-a'](\phi_s(x'_0,\xi'_0))\,ds\bigg|+\bigg|\int_0^{\tau_+(y(r),\eta)}[\tilde a-a'](\phi_s(x'_0,\xi'_0))\,ds\bigg|\\
\le\varepsilon\exp(\diam_\mu(M)\Sigma)\left(\displaystyle\frac{r}{\tau(x'_0,\xi_0)}+\frac{\tau_+(y(r),\eta)}{\tau(y(r),\eta)}\right)\\
\le 2\varepsilon\exp(\diam_\mu(M)\Sigma)
\end{multline}
by \eqref{prefinal-a}. Substituting \eqref{f-low},\eqref{prefinal-k} and applying the assumption $\|\tilde k\|_{\infty,1}\le\rho$ to the estimate \eqref{pre-2} with respect to the pairs $(a',k')$ and $(\tilde a,\tilde k)$ we get
\begin{multline*}
\int_0^{\tau_+(x'_0,\xi'_0)}\int_{S_{y(r)}M}|k'-\tilde k|(y(r),\dot y(t),\eta)\,d\sigma_{y(r)}(\eta)\,dr\\
\le\varepsilon\left(1+2\diam_\mu(M)\rho\,\omega_{n-1}\exp(\diam_\mu(M)\Sigma)\right)\\
\times\exp\left\{2\diam_\mu(M)\left(\varepsilon\frac{\exp(\diam_\mu(M)\Sigma)}{c_0}+\Sigma\right)\right\}=\varepsilon\, C_1.
\end{multline*}
Integrating the above inequality in $(x'_0,\xi'_0)\in \p_+ SM$ with respect the measure $\,d\mu(x'_0,\xi'_0)$, we get
$$
\|\tilde k-k'\|_1\le \varepsilon\vol(\p M)\omega_{n-1}C_1
$$
and Theorem~\ref{thE} holds with $C=\max\{\vol(\p M)\omega_{n-1}C_1,\exp(\diam_\mu(M)\Sigma)/c_0\}$.

\end{document}